\newtheoremstyle{OVplain}
{}
{}
{}
{}
{\bfseries}
{.}
{\newline}
{}
\theoremstyle{OVplain}
\newtheorem{thm}{Theorem}[section]
\newtheorem{lem}{Lemma}[section]
\newtheorem{prop}{Proposition}[section]
\newtheorem{cor}{Corollary}[section]
\newtheorem{ex}{Example}[section]
\theoremstyle{OVplain}
\newtheorem{defn}{Definition}[section]
\theoremstyle{remark}
\newtheorem{Remark}{Remark}[section]
\DeclareMathOperator{\Ima}{Im}
\DeclareMathOperator{\id}{id}
\newcommand{\floor}[1]{\lfloor #1 \rfloor}
\newcommand{\ceil}[1]{\lceil #1 \rceil}
\newcommand{\multimodules}{\vec{Vect}^{\vec{R}^n}}
\newcommand{\pfdmultimodules}{\vec{vect}^{\vec{R}^n}}
\newcommand{\fpbimodules}{\vec{vect}_\text{fin}^{\vec{R}^2}}
\newcommand{\Merge}{\textsf{M}_\delta^\mathcal{G}}
\newcommand{\Grid}{\text{Grid}_{\mathcal{G}_M}}
\newcommand{\fpmultimodules}{\vec{vect}_\text{fin}^{\vec{R}^n}}
\newcommand{\Unmerge}{\textsf{U}_\delta^\mathcal{G}}
\newcommand{\unmerge}[2]{\ensuremath{\textsf{U}_{#1}^{#2}}}
\newcommand{\kapeps}{\kappa \varepsilon}
\let\vec\mathbf
\newcommand*{\email}[1]{%
    \normalsize\href{mailto:#1}{#1}\par
    }
\title{Local Equivalence of Metrics for Multiparameter Persistence Modules}
\author{Oliver Vipond}
\affil{\email{vipond@maths.ox.ac.uk} University of Oxford, Andrew Wiles Building, Woodstock Rd, Oxford OX2 6GG}
\begin{document}

\maketitle
\begin{abstract}
    An ideal invariant for multiparameter persistence would be \textit{discriminative}, \textit{computable} and \textit{stable}. In this work we analyse the discriminative power of a stable, computable invariant of multiparameter persistence modules: the fibered bar code. The fibered bar code is equivalent to the rank invariant and encodes the bar codes of the 1-parameter submodules of a multiparameter module. This invariant is well known to be globally incomplete. However in this work we show that the fibered bar code is locally complete for finitely presented modules by showing a local equivalence of metrics between the interleaving distance (which is complete on finitely-presented modules) and the matching distance on fibered bar codes. More precisely, we show that: for a finitely-presented multiparameter module $M$ there is a neighbourhood of $M$, in the interleaving distance $d_I$, for which the matching distance, $d_0$, satisfies the following bi-Lipschitz inequalities $\frac{1}{34}d_I(M,N) \leq d_0(M,N) \leq d_I(M,N)$ for all $N$ in this neighbourhood about $M$. As a consequence no other module in this neighbourhood has the same fibered bar code as $M$.
\end{abstract}

\section{Introduction}

The theory and application of multiparameter persistent homology is a topic of significant interest in the field of Topological Data Analysis. Current work has studied invariants and metrics of multiparameter persistence modules, together with algorithms for their efficient computation \cite{vipond_multiparameter_2018,kerber_exact_2019, dey_computing_2019, lesnick_computing_2019}.

Compared to single parameter persistence, studying data sets filtered over multiple parameters yields a richer class of topological invariants: multiparameter persistence modules. A distinct difference between multiparameter and single parameter persistence modules is that multiparameter modules do not admit a discrete complete invariant analogous to the bar code for single parameter persistence modules \cite{carlsson_theory_2009}. Another upshot of the increased complexity of multiparameter modules, is that the analogous interleaving distance cannot be reduced to a matching distance as in the single parameter case and is thus harder to compute \cite{bjerkevik_computing_2019}.

An ideal invariant for multiparameter persistence would be \textit{discriminative}, \textit{computable} and \textit{stable}.
In earlier work studying invariants for multiparameter persistence, it is common to sacrifice the discriminating power of an invariant in order to achieve computability \cite{vipond_multiparameter_2018,mccleary_multiparameter_2019,corbet_kernel_2019}. Also it is common to study subclasses of persistence modules to tame the wild behaviour of arbitrary multiparameter modules. Such subclasses include: exact \cite{cochoy_decomposition_2020}, interval decomposable \cite{bjerkevik_stability_2016, dey_computing_2019}, block decomposable \cite{botnan_algebraic_2018, cochoy_decomposition_2020, bjerkevik_computing_2019} and rectangular interval decomposable \cite{botnan_rectangle-decomposable_2020} multiparameter persistence modules.
For data science applications, stability is an essential property of a multiparameter module invariant; indeed most data is susceptible to noise and so we desire that our invariants be robust to small perturbations.

A discriminative, stable metric one could choose to study multiparameter persistence modules is the interleaving distance \cite{lesnick_theory_2015}. The theoretical properties of the interleaving distance are well-behaved. Lesnick showed that the interleaving distance is universal on the space of multiparameter modules, and thus the most discriminative stable metric \cite{lesnick_theory_2015}. In recent work, it has been shown that this approach is not feasible computationally. Bjerkevik, Botnan and Kerber showed that the interleaving distance is \textsf{NP}-hard to compute and approximate for multiparameter modules \cite{bjerkevik_computing_2019}. We therefore must find a compromise between discrimating power and computability of multiparameter modules invariants and metrics.

Recently a wealth of computable invariants and metrics have been proposed and studied: \cite{landi_research_2018,cerri_betti_2013,vipond_multiparameter_2018,mccleary_multiparameter_2019,scolamiero_multidimensional_2017,corbet_kernel_2019,harrington_stratifying_2019}. Several of these proposed invariants and metrics have the same discriminating power as the fibered bar code \cite{vipond_multiparameter_2018,mccleary_multiparameter_2019,corbet_kernel_2019}. The fibered bar code encodes all of the single parameter submodules of a multiparameter module and discards higher order interactions. The fibered bar code is well known to be an incomplete invariant, what's more, there exist modules arbitrarily distant in the interleaving distance which have the same fibered bar code. However the fibered bar code and derived invariants are computable, stable and admit a range of desired features for data analysis: $L^p$-norms, averaging, statistical analysis \cite{vipond_multiparameter_2018, the_rivet_developers_rivet_2018, corbet_kernel_2019}.

A significant strength of our work is that the results apply to a constructible subclass of multiparameter modules, those which are finitely presented. This restriction is relevant to data analysis applications, since the sublevel-set persistence module of a finite multifiltered simplicial complex arising from finite data will give rise to such a constructible module. Hence our results apply to the multiparameter modules one is likely to encounter in applications.

\subsubsection*{Our contributions}

This work relates the local behaviour of the theoretically well-behaved interleaving distance $d_I$ to the computable yet incomplete matching distance $d_0$ \cite{landi_research_2018,cerri_betti_2013,kerber_exact_2019}. To the author's best knowledge this is the first place such a comparison has been made. This work provides a positive result that shows a local equivalence between these two metrics.

The main technical result of our paper shows that the matching distance $d_0$ distinguishes a finitely presented ($\mathbb{R}^n$ indexed) multiparameter persistence module $M$ from all modules $N$ in a $d_I$-neighbourhood of $M$ (\Cref{thm:LocalEquivalence}). We show that there is a $d_I$-open ball, $B_M$, centred at $M$ in the space of finitely presented modules such that for all $N \in B_M$:
\begin{equation}\label{eq:LocalEquvalence}
    \frac{1}{34}d_I(M,N) \leq d_0(M,N) \leq d_I(M,N)
\end{equation} 

The radius of the open ball for which \eqref{eq:LocalEquvalence} is valid is dependent on $M$. Moreover the statement is anchored about $M$, the inequalities of \eqref{eq:LocalEquvalence} hold in the ball $B_M$ when the first argument of the distance functions is fixed to be $M$. Indeed there exist $N,N'\in B_M$ such that $\frac{1}{34}d_I(N',N) \not\leq d_0(N',N)$. 

The upper bound $d_0 \leq d_I$ is not new and is a constructed property of the matching distance $d_0$. The local lower bound $\frac{1}{34}d_I\leq d_0$ is proven in this work and gives rise to a global equivalence of intrinsic metrics \Cref{thm:GlobalEquivalence}.

We observe that the space of $\mathbb{R}^n$-indexed finitely presented multiparameter persistence modules is a path metric space when equipped with the interleaving distance. This property follows from the characterisation of interleavings proven by Lesnick \cite{lesnick_theory_2015}. However, to the best of the author's knowledge, the fact that $d_I$ is an intrinsic metric has not been explored nor formally stated in the multiparameter persistence literature to date.

The local result \eqref{eq:LocalEquvalence} extends to a global result for the induced path metric $\hat{d}_0$ (\Cref{defn:IntrinsicMetric}). This says that $\hat{d}_0$ and $d_I$ are bi-Lipschitz equivalent as metrics on the whole space of finitely presented multiparameter persistence modules (\Cref{thm:GlobalEquivalence}). For any pair of finitely presented multiparameter modules $M,N$ we attain:

\begin{equation}
    \frac{1}{34}d_I(M,N) \leq \hat{d}_0(M,N) \leq d_I(M,N) 
    \label{eq:GlobalEquvalence}
\end{equation} 

Whilst $d_I$ is an intrinsic metric (\Cref{cor:PathMetricSpace}), the collection of geodesics between any pair of distinct modules is highly non-unique. This is a consequence of the interleaving distance behaving as an $L^\infty$-style norm. The matching distance $d_0$ also behaves like an $L^\infty$-style norm, however there are $L^p$-style norms defined on faithful invariants derived from the fibered bar code \cite{vipond_multiparameter_2018,corbet_kernel_2019}. These $L^p$-style metrics constrain the collection of geodesics between modules. This local equivalence result is the first step towards a sensible notion of interpolation between multiparameter modules compatible with the interleaving distance.

Finally, we identify that our result for multiparameter modules attains as a simple corollary equivalences of metrics on constructible spaces which can be embedded into the space of finitely presented modules. In particular we show how interlevel set persistence modules give rise to finitely presented 2-parameter persistence modules. 

\subsubsection*{Related work}

The author has been interested in comparing computable metrics on multiparameter persistence modules with the interleaving distance. However, a major inspiration for this work was the article of Carriere and Oudot: \textit{Local Equivalence and Intrinsic Metrics Between Reeb Graphs} \cite{carriere_local_2017}. In this work Carriere and Oudot conjecture that similar results to their local equivalence of metrics on Reeb graphs apply to ``more general classes of metric spaces than Reeb graphs". Indeed in \Cref{sec:InterlevelReeb} we show that our local equivalence result for multiparameter modules induces a similar but weaker result to the main result of \cite{carriere_local_2017} for Reeb graphs (induced by embedding Reeb graphs into multiparameter persistence modules). In our proof of \Cref{thm:LocalEquivalence} we employ similar techniques to those used in \cite{carriere_local_2017}.

The work of Lesnick and Wright has been fundamental to the author's understanding of finitely presented multiparameter persistence modules and the interleaving distance \cite{lesnick_theory_2015,lesnick_computing_2019, lesnick_interactive_2015}. In particular we have used in this work that a free resolution of a multiparameter module induces a free resolution of the 1-parameter submodules, the characterisation of interleavings (\Cref{thm:Characterisation}), and the push function of \cite{lesnick_interactive_2015}.

In extending our local equivalence result \Cref{thm:LocalEquivalence} to a global equivalence \Cref{thm:GlobalEquivalence} we consider paths in the space of multiparameter persistence modules in order to induce intrinsic metrics. The resulting intrinsic metric is similar in nature to the Wasserstein distance in \cite{bubenik_wasserstein_2018}, which is defined via a path of simple morphisms.

\subsection*{Acknowledgements}
The author would like to thank his supervisors Ulrike Tillmann and Vidit Nanda for their support during his research. In addition, the author would like to thank Jacob Leygonie for many constructive discussions which cemented the technical arguments in this work. The author is grateful to receive support from EPSRC studentship EP/N509711/1 and EPSRC grant EP/R018472/1.  

\subsection*{Outline of Content}

In \Cref{sec:Prelims} we introduce multiparameter persistence theory including the two metrics which we wish to compare: the interleaving distance (\Cref{def:InterleavingDistance}) and the matching distance (\Cref{def:MatchingDistance}).

In \Cref{sec:M&SFunctors} we define Merge and Simplification Functors which we shall use to manipulate finitely presented multiparameter persistence modules. We prove these functors are well defined and establish their effect on the presentation of a finitely presented multiparameter persistence module.

\Cref{sec:LocalEquivalence} is the technical heart of the paper within which we prove the local equivalence of the interleaving distance and matching distance (\Cref{thm:LocalEquivalence}). The proof of this result makes heavy use of the functors defined and established in \Cref{sec:M&SFunctors}.

In \Cref{sec:GlobalEquiv} we recall the definition of intrinsic metrics. We show that the space of finitely presented multiparameter persistence modules has geodesic paths with respect to the interleaving distance, and thus our local equivalence result induces a global equivalence between the interleaving distance and the intrinsic matching distance.

Finally in \Cref{sec:InterlevelReeb} we show that our local equivalence of metrics result for multiparameter persistence modules induces a local equivalence of metrics for Reeb graphs analogous to the result attained in \cite{carriere_local_2017}.

\section{Preliminaries}

\label{sec:Prelims}

In this section we tersely introduce the key multiparameter persistence definitions we require to develop the functors in \Cref{sec:M&SFunctors} and prove the main results in \Cref{sec:LocalEquivalence} and \Cref{sec:GlobalEquiv}. For a more complete introduction see \cite{carlsson_theory_2009,lesnick_interactive_2015}. 

\subsection{Multiparameter Persistence Modules}

\subsubsection*{Notation}

Let $P_n$ denote the monoid ring of the monoid $([0,\infty)^n,+)$ over a field $\mathbb{F}$. One can think of $P_n$ as a pseudo-polynomial ring $\mathbb{F}[x_1,...,x_n]$ in which exponents are only required to be non-negative and can be non-integral. We shall denote the monomial $\Pi_{i=1}^n x_i^{a_i}\in P_n$ as $\vec{x}^\vec{a}$.
Let $\vec{R}^n$ denotes the category associated to the poset $(\mathbb{R}^n,\leq)$ under the standard coordinate-wise partial order, and more generally let  $\vec{P}$ denote the category associated to the poset $P$. Let $\vec{Vect}$ denote the category of vector spaces and linear maps over $\mathbb{F}$, and $\vec{vect}$ denote the subcategory of finite dimensional vector spaces. For a category $\vec{C}$ and a poset category $\vec{P}$ let us denote the functor category of $\textbf{C}$-valued functors on $\textbf{P}$ by $\textbf{C}^\textbf{P}$. For a poset $P$ we shall use $\vee$ to denote the join of elements.

\begin{defn}[Multiparameter Persistence Module]
A \textcolor{Maroon}{multiparameter persistence module} is an $\mathbb{R}^n$-graded $P_n$-module, normally denoted by $M$. That is to say $M$ has a decomposition as a $\mathbb{F}$-vector space $M = \bigoplus_{\vec{a}\in \mathbb{R}^n} M_\vec{a}$ compatible with the action of $P_n$: if $\vec{a}\in \mathbb{R}^n$, $\vec{x}^\vec{b}\in P_n$ and $m \in M_\vec{a}$ then $\vec{x}^\vec{b} \cdot m \in M_{\vec{a}+\vec{b}}$.
A \textcolor{Maroon}{morphism} of graded modules is required to respect the grading and be compatible with the module structure i.e. if $f:M \to N$, $r\in P_n$ and $m\in M_\vec{a}$ then $f(m)\in N_\vec{a}$ and $r\cdot f(m) = f(r\cdot m)$.
\end{defn}

\noindent For the statement of some results it is more convenient to use the following equivalent category-theoretic definition.

\begin{defn}[Multiparameter Persistence Module]
A \textcolor{Maroon}{multiparameter persistence module} is an element of the functor category $\vec{Vect}^{\vec{R}^n}$. A \textcolor{Maroon}{morphism} of multiparameter persistence modules is a natural transformation $M \Rightarrow M'$.
\end{defn}

The equivalence of the two perspectives is simply saying that we have an equivalence of categories between $\mathbb{R}^n$-graded $P_n$-$\vec{Mod}$ and $\vec{Vect}^{\vec{R}^n}$ \cite{lesnick_theory_2015}. We shall freely switch between these equivalent perspectives throughout.

\begin{defn}[Internal Translation]
We shall denote by $\varphi^M_\varepsilon$ the \textcolor{Maroon}{internal translation} of the module $M$ given by the action of the ring element $\vec{x}^{\varepsilon\vec{1}}\in P_n$ on $M$: $\varphi^M_\varepsilon: M \to M$ via $\varphi^M_\varepsilon(m) = \vec{x}^{\varepsilon\vec{1}}\cdot m$.
\end{defn}

\begin{defn}[Interval Decomposable Modules]
\label{defn:intervalmodule}
Let $I \subset \mathbb{R}^n$ be a connected subposet such that $\vec{a},\vec{b}\in I$ and $\vec{a}\leq \vec{p} \leq \vec{b}$ implies $\vec{p} \in I$, then we say $I$ is an \textcolor{Maroon}{interval}. Let $\mathds{1}^I$ denote the $\vec{vect}$-valued functor with domain $\vec{R}^n$ for which $\dim \mathds{1}^I(\vec{a}) = \mathds{1}\{\vec{a} \in I \}$ and such that the morphisms $\mathds{1}^I(\vec{a}\leq \vec{b})$ are isomorphisms wherever possible. We say that $\mathds{1}^I$ is an \textcolor{Maroon}{interval module}, and any module $M$ which is isomorphic to the direct sum of interval modules is \textcolor{Maroon}{interval decomposable}.
\end{defn}

\noindent We shall regularly use interval decomposable modules in our examples and figures. However one should keep in mind that our results are not constrained to this subclass of modules.

Let us now define the interleaving distance, the most discriminative stable metric one can consider on multiparameter persistence modules alluded to in the introduction. We shall adopt the notation of \cite{bubenik_metrics_2015}. Let $T_\varepsilon : \vec{R}^n \to  \vec{R}^n$ denote the translation endofunctor given by $T_\varepsilon(\vec{a}) = \vec{a} + \varepsilon\vec{1}$ where $\vec{1} = (1,1,...,1)$. Observe that this translation induces an endofunctor $T^\ast_{\varepsilon}: \multimodules \to \multimodules$ where $T^\ast_{\varepsilon}(M) =  M\circ T_{\varepsilon}$.

\begin{defn}[Interleaving Distance]
\label{def:InterleavingDistance}
Let $M,N \in \multimodules$ be multiparameter persistence modules and  We say that $M,N$ are \textcolor{Maroon}{$\varepsilon$-interleaved} if there exist natural transformations $f : M \Rightarrow NT_\varepsilon$, $g: N\Rightarrow MT_\varepsilon$ satisfying the coherence criteria that $T^\ast_{\varepsilon}(g)f  = \varphi_{2\varepsilon}^M $, $ T^\ast_{\varepsilon(f})g  = \varphi_{2\varepsilon}^N $.

We define the \textcolor{Maroon}{interleaving distance} to be the infimum of $\varepsilon$ for which $M,N$ are $\varepsilon$-interleaved:

$$ d_I(M,N) = \inf \{\varepsilon\geq 0 : M,N \text{ are } \varepsilon\text{-interleaved}\}$$
\end{defn}

An interleaving may be thought of as an approximate isomorphism. Modules are $0$-interleaved if and only if they are isomorphic. By \textit{blurring} the poset with translations $T_{\varepsilon}$ we admit flexibility to the rigid notion of isomorphism.
The interleaving distance is universal amongst stable distances on persistence modules, that is to say any other stable distance is bounded above by the interleaving distance \cite{lesnick_theory_2015}. However, the interleaving distance has been shown to be \textsf{NP}-hard to compute and approximate \cite{bjerkevik_computing_2019}.

Associated to a multiparameter module is a family of single parameter modules whose collection of bar codes is known as the fibered bar code. A line in $\mathbb{R}^n$ with equation $\mathcal{L}(t) = t\vec{m}+\vec{c}$ is said to be positively sloped if each coordinate of the gradient $\vec{m}$ is strictly positive ($m_i>0$ for all $i$). 

\begin{defn}[Fibered Bar Code]

Let $\vec{{L}}$ denote the subposet of $\vec{R}^n$ corresponding to a positively sloped line $\mathcal{L}\subset \mathbb{R}^n$. Let $\iota_\mathcal{L} : (\mathbb{R},\|\cdot\|_\infty) \to (\mathbb{R}^n,\|\cdot\|_\infty)$ denote the isometric embedding with $\iota_\mathcal{L}( \vec{R})=\vec{L}$ and $\iota_\mathcal{L}(0)\in \{\vec{x} \in \mathbb{R}^n : x_n = 0\}$. Then for $M\in\vec{Vect}^{\vec{R}^n}$ the composite $M^\mathcal{L} = M\circ \iota_\mathcal{L}$ is a single parameter persistence module, and thus has an associated bar code $\mathcal{B}(M^\mathcal{L})$. The \textcolor{Maroon}{fibered bar code} of $M$ is the collection $\{\mathcal{B}(M^\mathcal{L}) \ :\  \mathcal{L} \in \Lambda \}$ where $\Lambda$ denotes the set of positively sloped lines.
\end{defn}

In contrast to the interleaving distance, the fibered bar code is computable for a (finitely-presented) multiparameter persistence module. The RIVET software \cite{the_rivet_developers_rivet_2018} efficiently computes the fibered bar code for a 2-parameter finitely presented module, and has been used in \cite{keller_phos:_2018,vipond_multiparameter_2018}.

\begin{figure}
    \centering
    \begin{subfigure}{0.45\textwidth}
        \centering
        \includegraphics[width=1\textwidth]{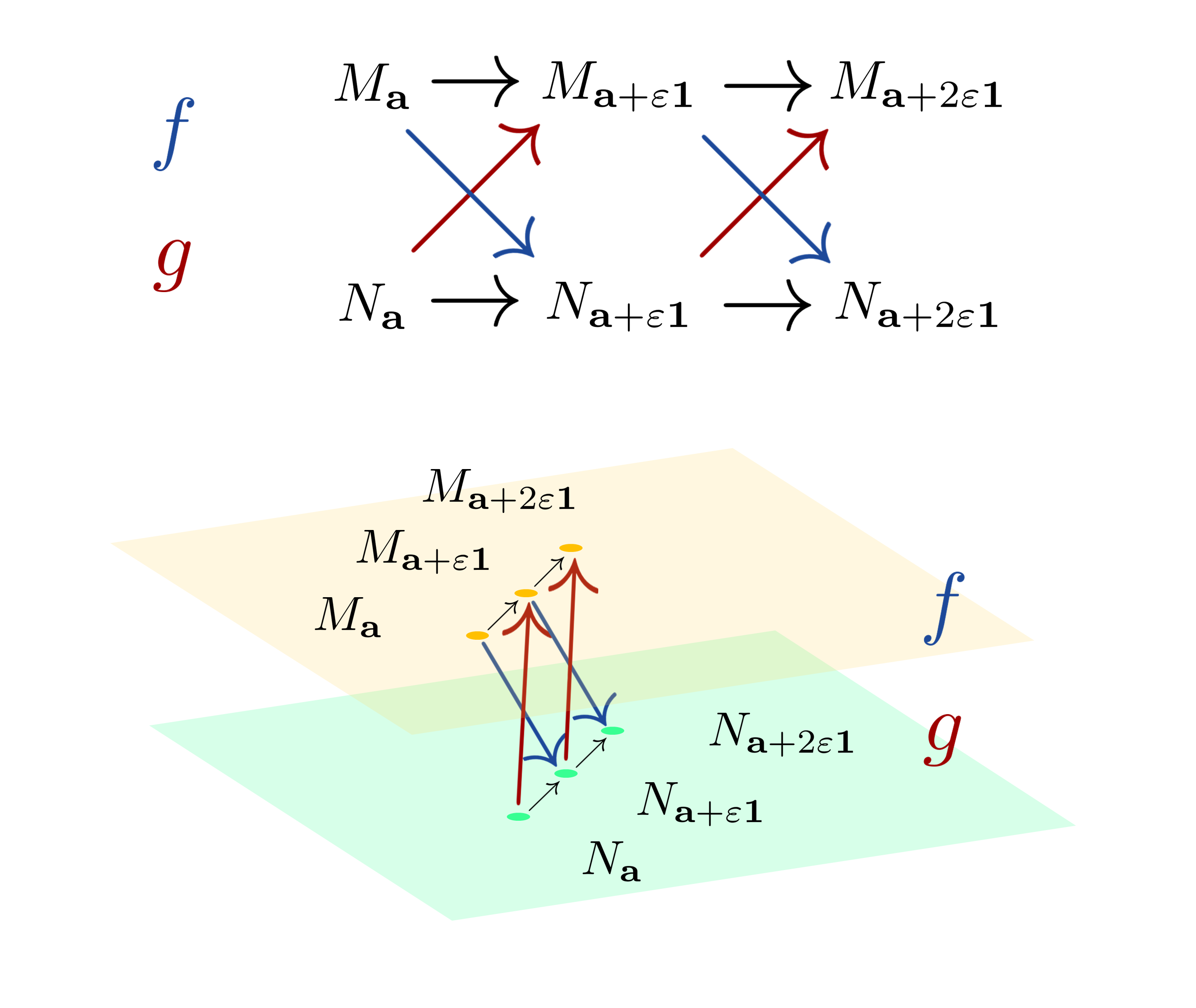} 
        \caption{A visualisation of the morphisms comprising an $\varepsilon$-interleaving for 2-parameter persistence modules.}
        \label{subfig:FiberedBarcode}
    \end{subfigure}\hfill
    \begin{subfigure}{0.45\textwidth}
        \centering
        \includegraphics[width=1\textwidth]{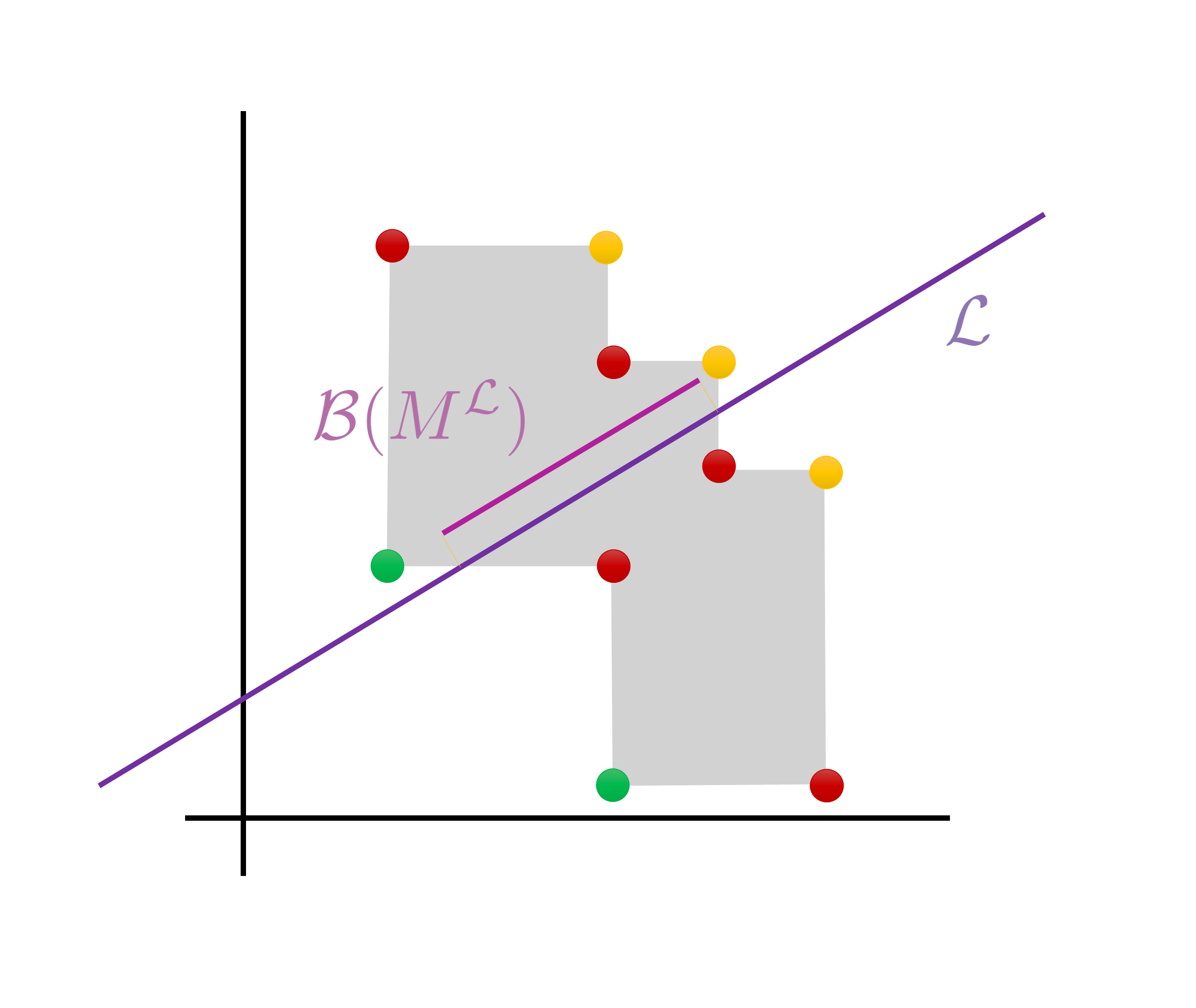} 
        \caption{A bar code $\mathcal{B}(M^\mathcal{L})$ in the fibered bar code of a 2-parameter interval module $M$.}
    \label{subfig:Interleaving}
    \end{subfigure}
    \caption{An illustration of an $\varepsilon$-interleaving and the fibered bar code, for 2-parameter persistence modules.}
    \label{fig:Interleaving&FiberedBarcode}
\end{figure}

\begin{defn}[Push Function \cite{lesnick_interactive_2015}]
\label{defn:pushfunction}
Let $\mathcal{L}: \mathbb{R} \to \mathbb{R}^n$ be a strictly positively sloped line in $\mathbb{R}^n$ isometrically embedded with respect to $\|\cdot\|_\infty$. The map \textcolor{Maroon}{$\text{push}_\mathcal{L}: \mathbb{R}^n \to  \Ima \mathcal{L} $} is defined as $\text{push}_\mathcal{L}(\vec{p}) = \min \{\vec{a} \in \Ima\mathcal{L} : \vec{a} \geq \vec{p}\}$, and is a partial order preserving map, pushing every element of $\mathbb{R}^n$ to the line $\mathcal{L}$.

\end{defn}

Using the bottleneck distance for 1-parameter persistence modules on each 1-parameter submodule we induce a matching distance on the fibered bar code. The matching distance was defined for multiparameter persistence modules in \cite{cerri_betti_2013}, and the stability of the matching distance with respect to the interleaving distance explicitly shown in \cite{landi_research_2018}.

\begin{defn}[Matching Distance  \cite{cerri_betti_2013}]\label{def:MatchingDistance}

Let $M,N\in \vec{vect}^{\vec{R}^2}$ the \textcolor{Maroon}{matching distance} is taken to be the weighted supremum of the bottleneck distance over $1$-dimensional submodules:
$$ d_0(M,N) = \sup_{\mathcal{L}\in \Lambda} \ w(\mathcal{L})\cdot d_B(M^\mathcal{L},N^\mathcal{L})$$
where the weighting $ w(\mathcal{L}) := \| \text{push}_\mathcal{L}(\mathcal{L}(0)+ \vec{1}) - \mathcal{L}(0) \|_\infty$

\end{defn}

The weighting for the matching distance is constructed in order that the matching distance is bounded by the interleaving distance, $d_0\leq d_I$. An $\varepsilon$-interleaving of multiparameter persistence modules $M,N$ induces a $\frac{\varepsilon}{w(\mathcal{L})}$-interleaving between the single parameter modules along the line $\mathcal{L}$, (hence a $\frac{\varepsilon}{w(\mathcal{L})}$-matching between their bar codes), and thus $d_B(M^\mathcal{L},N^\mathcal{L}) \leq d_I(M,N)$ \cite{landi_research_2018,cerri_betti_2013}(see \Cref{fig:InducedInterleaving}).
For example, a line $\mathcal{L}$ of slope $\vec{1}$ has weighting $ w(\mathcal{L})=1$, and the weighting for the line $\mathcal{L} : y = mx+c \subset \mathbb{R}^2$ is given by:
\begin{align*}
    w(\mathcal{L}) = \begin{cases}
    \frac{1}{\sqrt{1+m^2}} & \text{ for } m \geq 1 \\
    \frac{1}{\sqrt{1+\frac{1}{m^2}}} & \text{ for } m < 1 
    \frac{}{}
    \end{cases}
\end{align*}
Exact computation of the matching distance for bimodules has been studied in \cite{kerber_exact_2019}.
\begin{figure}
\centering

\includegraphics[width=0.4\linewidth]{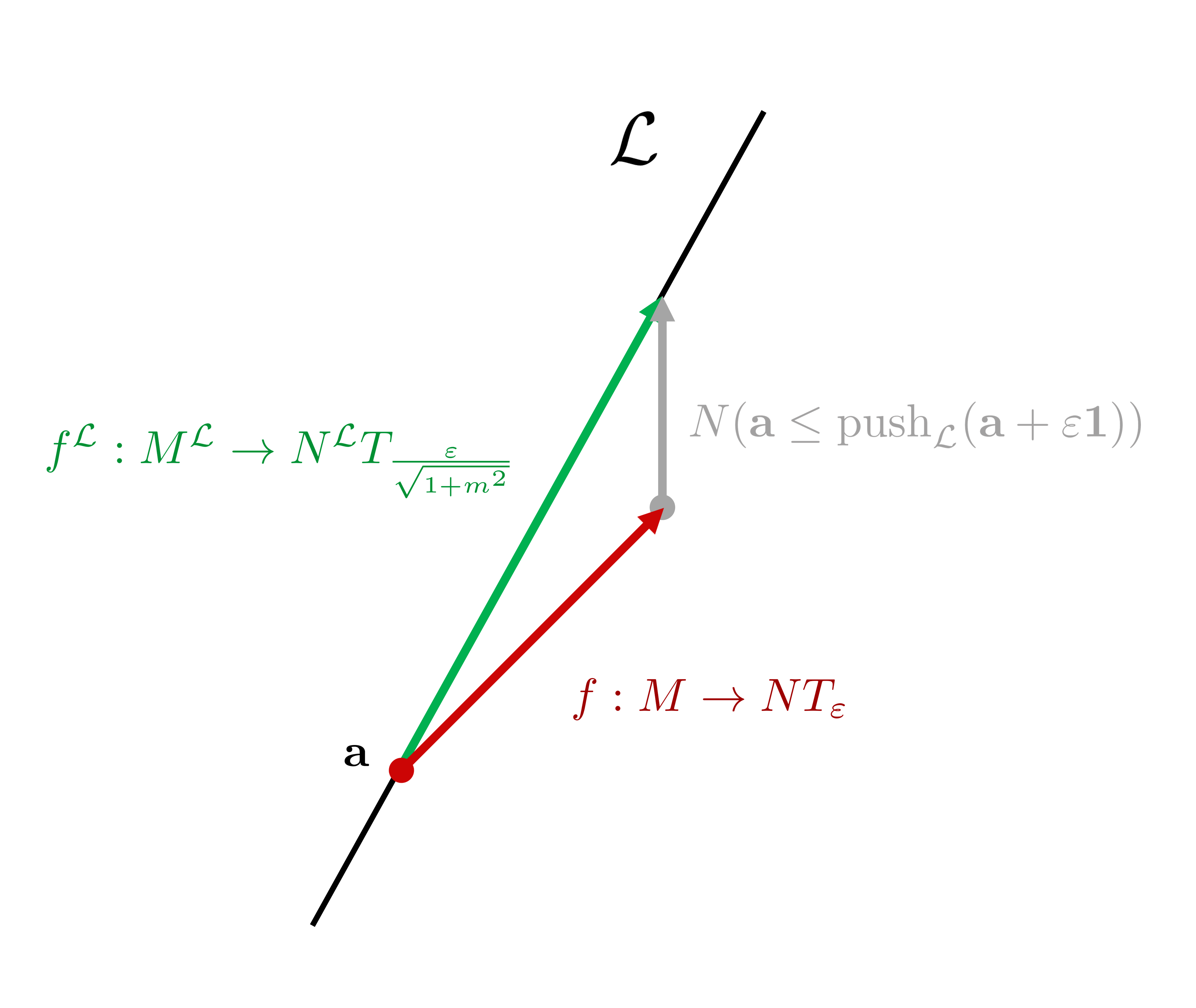}

\caption{An $\varepsilon$-interleaving morphism $f:M \to NT_\varepsilon$ for 2-parameter persistence modules $M$ and $N$ induces a $\frac{\varepsilon}{\sqrt{1+m^2}}$-interleaving morphism $f^\mathcal{L}:M^\mathcal{L} \to N^\mathcal{L}T_\frac{\varepsilon}{\sqrt{1+m^2}}$ for the line $\mathcal{L}: x_2 = m\cdot x_1 +c$ with slope $m\geq 1$. The interleaving morphism $f^\mathcal{L}$ is realised as a composition of the interleaving morphism $f$ and an internal morphism of $N$: $f^\mathcal{L}_\vec{a}= N(\vec{a}+\vec{\varepsilon}\leq \text{push}_\mathcal{L}(\vec{a}+\vec{\varepsilon}))\circ f_\vec{a}$. }
\label{fig:InducedInterleaving}
\end{figure}

\subsection{Presentations}

Let us now develop the theory required to define presentations of multiparameter persistence modules. 




We define an $\mathbb{R}^n$-graded set to be an indexing set $\mathcal{X}$ together with a grading map $\text{gr}: \mathcal{X} \to \mathbb{R}^n$. For an element $j$ of a graded set with $\text{gr}(j)=\vec{a}$, we shall refer to $\vec{a}$ as the grade of $j$. For a graded set $(\mathcal{X},\text{gr})$ we will use $\mathcal{X}(\varepsilon)$ to denote the graded set $(\mathcal{X},T_{-\varepsilon}\circ\text{gr})$. Similarly for $j\in \mathbb{R}^n$ we will use $P_n(-j)$ to denote the the ring $P_n$ with grading shifted by $j$ so that the multiplicative identity of $P_n(-j)$ lives at grade $j$.

\begin{defn}[Free Module]

The \textcolor{Maroon}{free module} on $\mathcal{X}$, an $\mathbb{R}^n$-graded set, is defined to be:
$$ \text{Free}[\mathcal{X}] = \bigoplus_{j\in \mathcal{X}} P_n(-\text{gr}(j))$$
\end{defn}

A free module on a graded set can equivalently be defined using a universal property characterisation \cite{carlsson_theory_2009}, and it is clear that a morphism from a free module is determined by its image on the generating set.

We say a subset $\mathcal{R} \subset M$ of a persistence module is homogeneous if $\mathcal{R} \subset \cup_{\vec{a}\in \mathbb{R}^n} M_\vec{a}$. That is to say each element has a well-defined grade.

\begin{defn}[Presentations]

Let $\mathcal{X}$ be a graded set and $\mathcal{R}$ a homogeneous subset of the free module on $\mathcal{X}$ generating the submodule $\langle \mathcal{R} \rangle$. We say that a persistence module $M$ has \textcolor{Maroon}{presentation} $\langle \mathcal{X} | \mathcal{R}\rangle$ if:
$$ M \cong \frac{\text{Free}[\mathcal{X}]}{\langle \mathcal{R} \rangle}$$

\end{defn}

A presentation is \textcolor{Maroon}{finite} if both $\mathcal{X}$ and $\mathcal{R}$ are finite. We shall denote the subspace of finitely presented mulitparameter modules as \textcolor{Maroon}{$\text{vect}_\text{fin}^{\vec{R}^n}$}.
Let $I$ denote the ideal of $P_n$ generated by the elements $\{\vec{x}^\vec{a} \ | \ \vec{a} > 0 \}$ and let $\Phi_{\langle \mathcal{X} | \mathcal{R}\rangle} : \text{Free}[\mathcal{R}] \to \text{Free}[\mathcal{X}]$ be the map induced by the inclusion $\mathcal{R} \hookrightarrow \text{Free}[\mathcal{X}]$. We say that a presentation of $M$ is  \textcolor{Maroon}{minimal} if $\mathcal{R} \subset I \cdot \text{Free}[\mathcal{X}]$ and $\ker_{\Phi_{\langle \mathcal{X} | \mathcal{R}\rangle}} \subset I \cdot \text{Free}[\mathcal{R}]$.

More generally we say that a free resolution $F_\bullet \to_{p_\bullet} M$ is minimal if $F_i = \text{Free}[\mathcal{X}_i]$ and $\ker(p_i) \subset I \cdot \text{Free}[\mathcal{X}_i]$ for all $i\in \mathbb{N}$.

\begin{figure}
    \centering
        \includegraphics[width=0.5\textwidth]{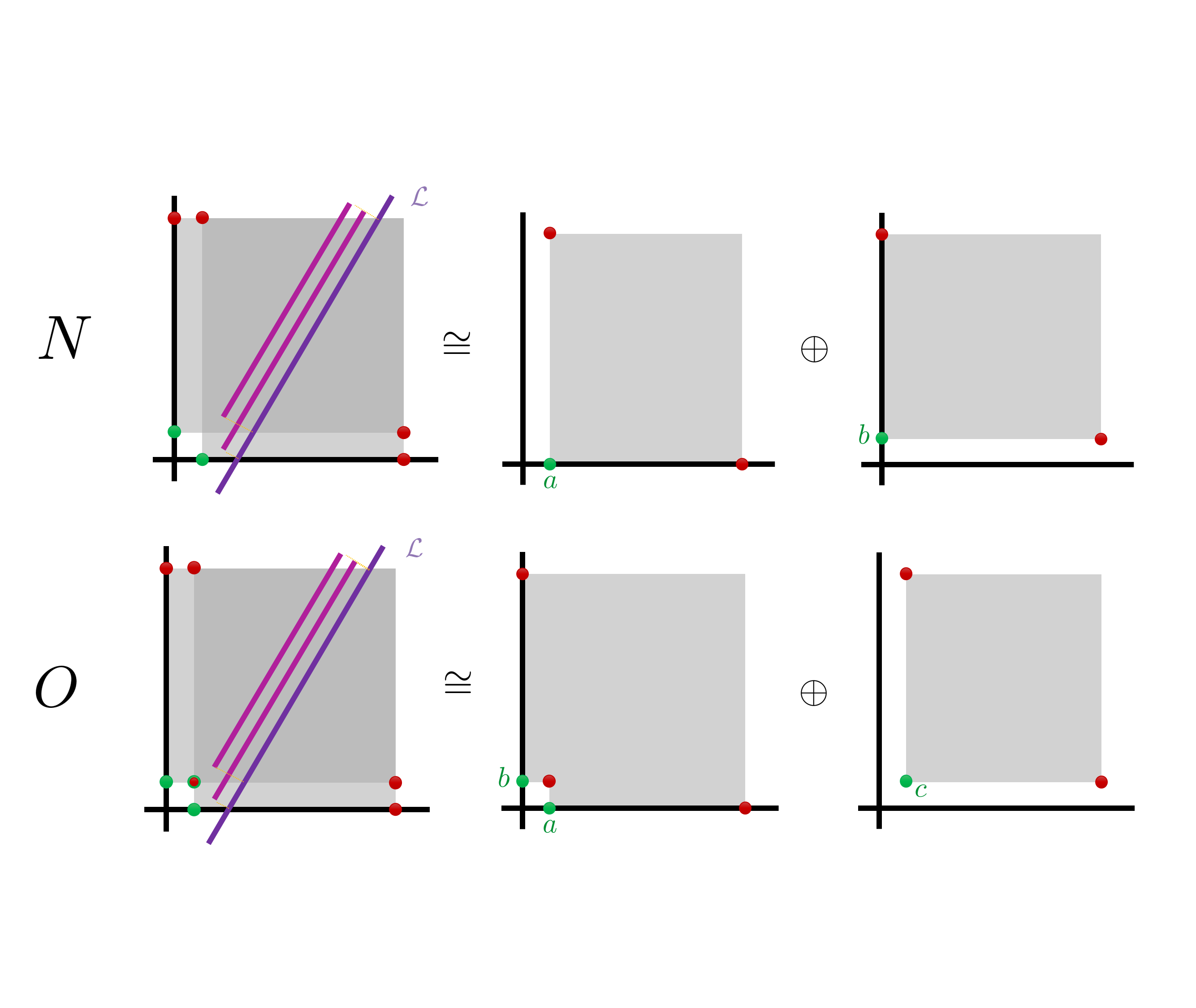}
    \caption{Non-isomorphic interval decomposable 2-parameter persistence modules $N$ and $O$, $d_I(N,O) = \varepsilon > 0$ \Cref{ex:Incompleteness}. The fibered bar codes of $N$ and $O$ are identical and thus the matching distance between $N$ and $O$ is zero, $d_0(N,O)=0$.}
    \label{fig:IncompletenessExample}
\end{figure}

\begin{ex}[Incompleteness of the Matching Distance]
\label{ex:Incompleteness}
Let us define the following graded sets and homogeneous subset:
\begin{align*}
     \mathcal{X}_N &= \{(a,(\varepsilon,0)),(b,(0,\varepsilon))\} \\
     \mathcal{X}_O &= \mathcal{X}_N \cup \{(c,(\varepsilon,\varepsilon))\} \\
     \mathcal{R}_N &= \{x_1^{9\varepsilon}\cdot a,x_2^{9\varepsilon}\cdot b,x_2^{10\varepsilon}\cdot a, x_1^{10\varepsilon}\cdot b\} \subset \text{Free}[\mathcal{X}_N]\\
     \mathcal{R}_O &= \mathcal{R}_O \cup \{x_1^\varepsilon\cdot b - x_2^\varepsilon \cdot a, x_1^{9\varepsilon}\cdot c,x_2^{9\varepsilon}\cdot c \} \subset \text{Free}[\mathcal{X}_O]
\end{align*}

Let $N,O$ denote the 2-parameter persistence modules with presentations $\langle \mathcal{X}_N | \mathcal{R}_N\rangle, \langle \mathcal{X}_O | \mathcal{R}_O\rangle  $ respectively (see \Cref{fig:IncompletenessExample}). The fibered bar code of $N$ and $O$ are indistinguishable, $d_0(N,O) = 0$, yet $d_I(N,O) = \varepsilon$. We also have that $d_I(N,0),d_I(O,0) \leq 5 \varepsilon$, so for any 2-parameter module $M$, $d_I(M,M\oplus N),d_I(M,M\oplus O) \leq 5 \varepsilon$ and $d_0(M\oplus N, M \oplus O) = 0$. This example witnesses the incompleteness of the matching distance and shows that our local equivalence of metrics between $d_I$ and $d_0$ \eqref{eq:LocalEquvalence} is necessarily anchored about $M$. 
\end{ex}

\begin{defn}[Multiparameter Betti Numbers]

Let $M$ be a multiparameter persistence module. The \textcolor{Maroon}{multiparameter Betti numbers} are maps $\xi_i(M) : \mathbb{R}^n \to \mathbb{N}$ defined by:
$$ \xi_i(M)(\vec{a}) = \dim_{\mathbb{F}} (\text{Tor}^{P_n}_i(M,P_n / I P_n)_\vec{a})$$
The multiparameter Betti numbers are well defined (see \cite{lesnick_interactive_2015} for details).

\end{defn}


If $\langle \mathcal{X} | \mathcal{R}\rangle$ is a minimal presentation for $M$ then $\xi_0(M)(\vec{a}) = |\text{gr}^{-1}_\mathcal{X}(\vec{a})|$ and $\xi_1(M)(\vec{a}) = |\text{gr}^{-1}_\mathcal{R}(\vec{a})|$. More generally the support of the maps  $\xi_i(M)$ are the locations of the generators of $F_i$ in a minimal free resolution $F_\bullet \to M$ (up to multiplicity). 

For convenience we will refer to $\xi_i(M)$ as a set, by which we mean the support of the map $\xi_i(M)$. That is, when we refer to a multiparameter Betti number $\vec{a}\in \xi_i(M)$ we mean that $\vec{a}$ is in the support of $\xi_i(M)$ and thus there is a generator at grade $\vec{a}$ in any minimal free resolution.

For a finitely presented module we can break up $\mathbb{R}^n$ into cells within which the module is constant (the internal morphisms are isomorphisms). The boundaries of these cells are determined by the positions of the multiparameter Betti numbers.

\begin{figure}
    \centering
    \begin{subfigure}{0.45\textwidth}
        \centering
        \includegraphics[width=1\textwidth]{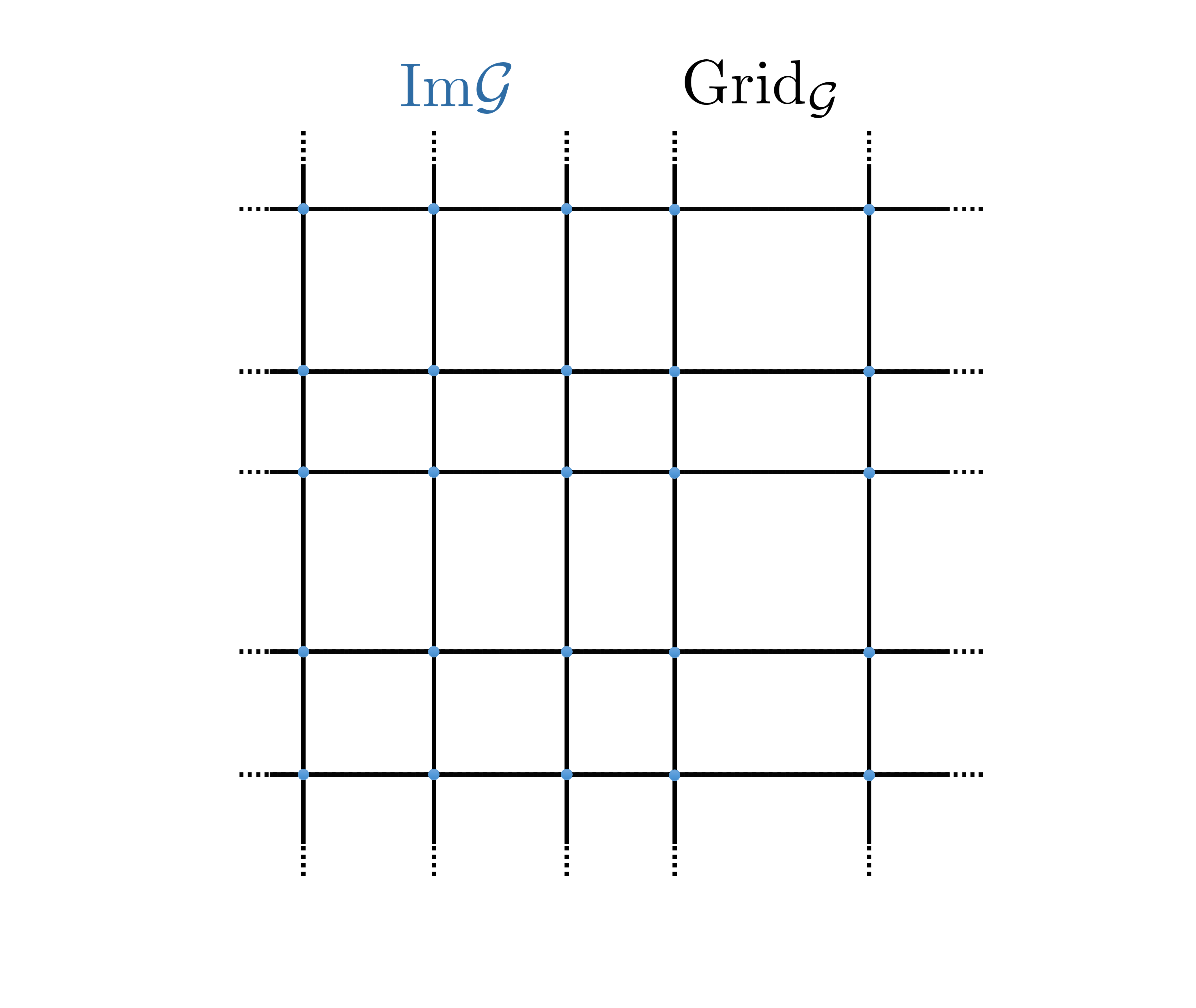} 
        \caption{A grid functions $\mathcal{G}: [k_1]\times [k_2] \to \mathbb{R}^2$ together with the associated collection of axis aligned lines $\text{Grid}_\mathcal{G}$.}
    \end{subfigure}\hfill
    \begin{subfigure}{0.45\textwidth}
        \centering
        \includegraphics[width=1\textwidth]{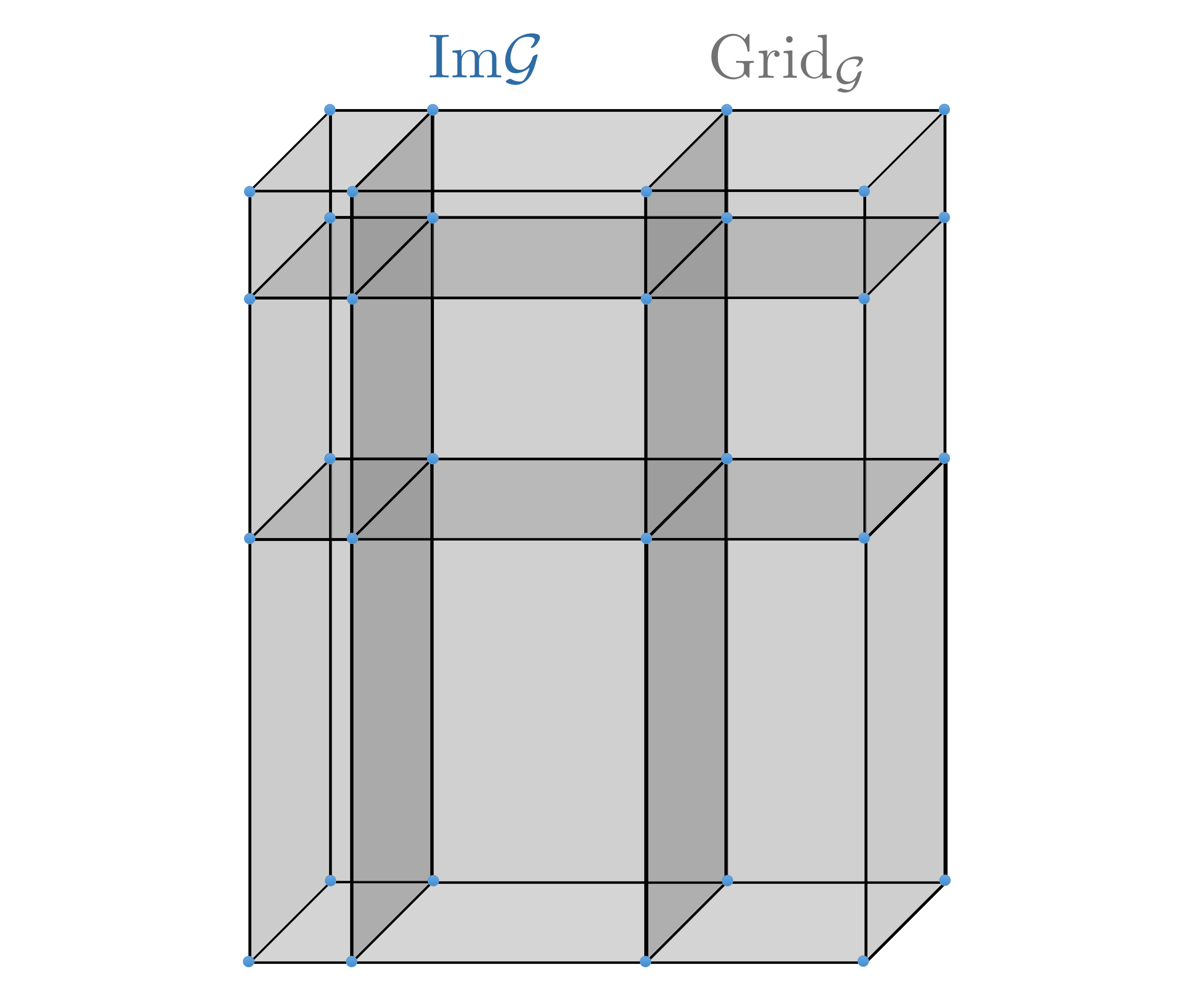} 
        \caption{A grid functions $\mathcal{G}: [k_1]\times [k_2] \times [k_3] \to \mathbb{R}^3$ together with the associated collection of axis aligned hyperplanes $\text{Grid}_\mathcal{G}$.}
    \end{subfigure}
    \caption{Example grid functions with their associated collection of axis aligned hyperplanes.}
    \label{fig:Grids}
\end{figure}

\begin{figure}
    \centering
    \begin{subfigure}{0.45\textwidth}
        \centering
        \includegraphics[width=1\textwidth]{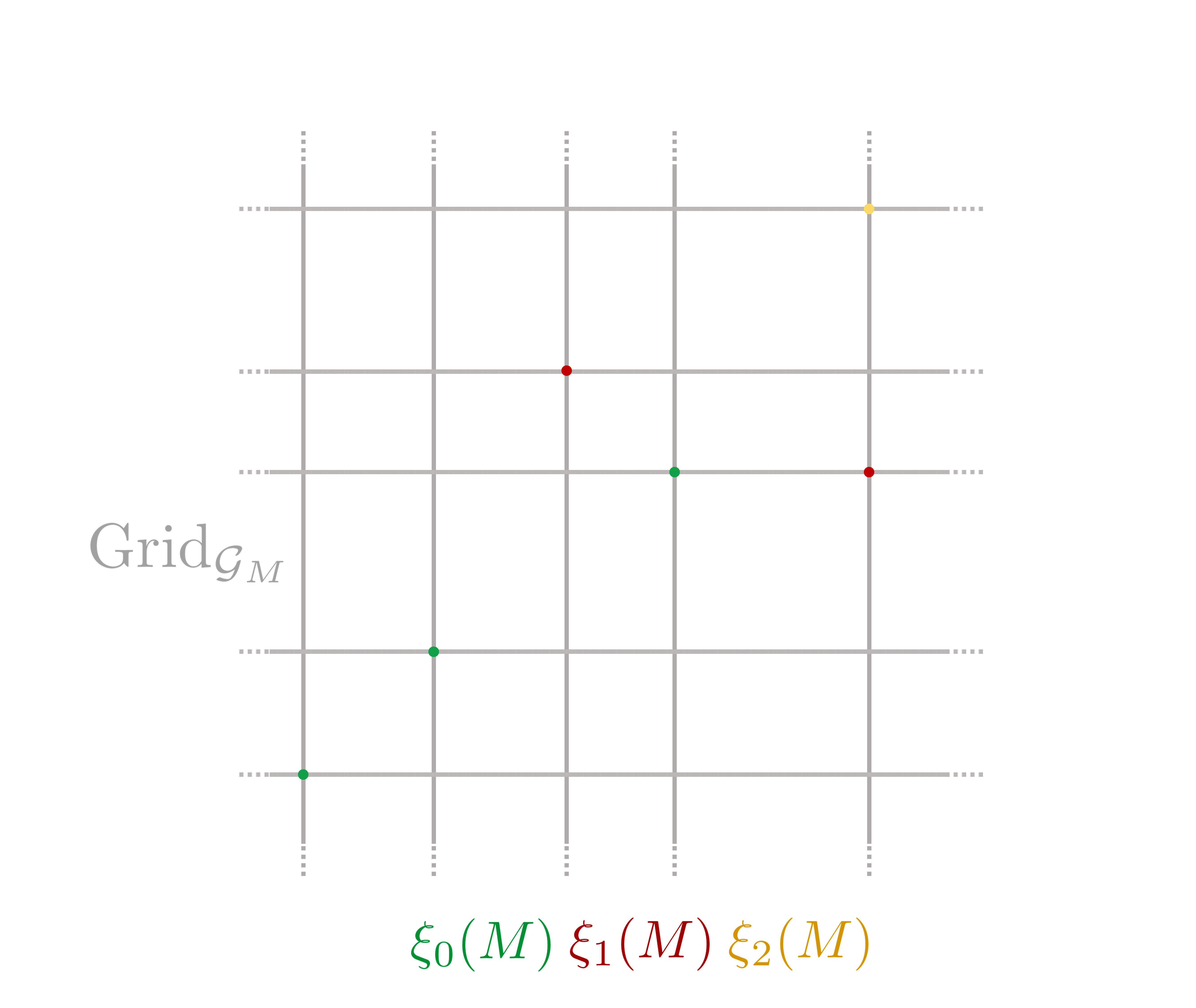} 
        \caption{Multiparameter Betti numbers of some 2-parameter module $M\in \fpbimodules$ together with the collection of axis aligned lines $\text{Grid}_{\mathcal{G}_M}$.}
    \end{subfigure}\hfill
    \begin{subfigure}{0.45\textwidth}
        \centering
        \includegraphics[width=1\textwidth]{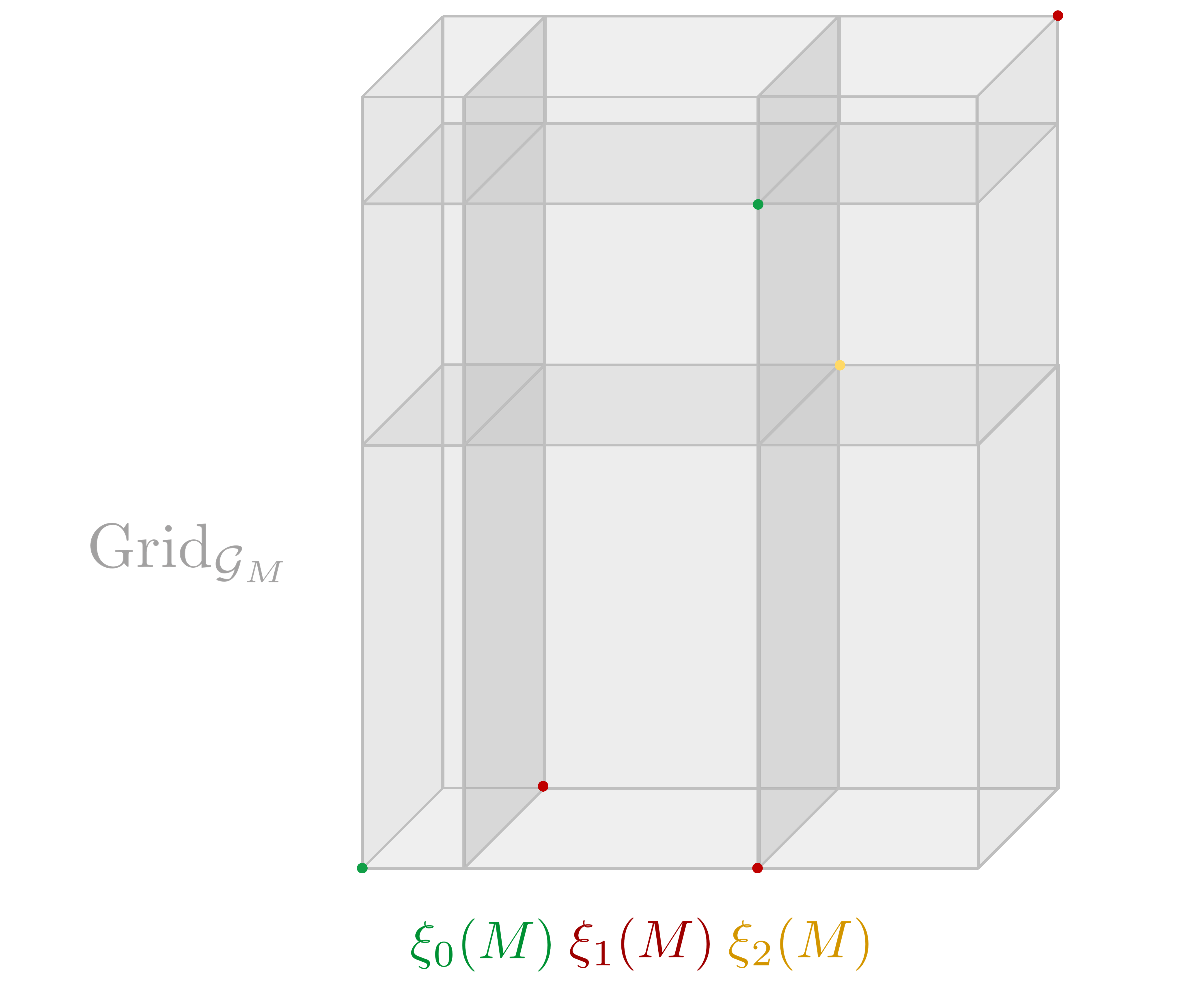} 
        \caption{Multiparameter Betti numbers of some 3-parameter module $M\in \vec{vect}_\text{fin}^{\vec{R}^3}$ together with the collection of axis aligned hyperplanes $\text{Grid}_{\mathcal{G}_M}$.}
    \end{subfigure}
    \caption{Multiparameter Betti numbers of multiparameter persistence modules together with their associated collection of axis aligned hyperplanes.}
    \label{fig:Multiparameter_Betti_Grids}
\end{figure}

\begin{defn}[Grid Function]
Let $[k]$ denote the set $\{1,...,k\}$. A \textcolor{Maroon}{grid function} is a map $\mathcal{G} = \Pi_{j=1}^n \mathcal{G}^j : \Pi_{j=1}^n [k_j]  \to \mathbb{R}^n$.
We shall use  \textcolor{Maroon}{$\text{Grid}_\mathcal{G}$} to denote the collection of axis aligned hyperplanes which passes through the image of this map:
$$ \text{Grid}_\mathcal{G} := \{\vec{a} \in \mathbb{R}^n : \pi_j(\vec{a}) \in \Ima \mathcal{G}^j \text{ for some } j\in [n]\}$$
\end{defn}

\begin{defn}[Controlling Constant]

For a grid function $\mathcal{G}: \Pi_{j=1}^n [k_j]  \to \mathbb{R}^n$ we define the \textcolor{Maroon}{controlling constant} $c(\mathcal{G})$ to be:

$$ c(\mathcal{G}) := \min \{\|\vec{a}-\vec{b}\|_\infty: \vec{a}\neq \vec{b}, \vec{a}, \vec{b} \in \Ima \mathcal{G}\} $$

In the edge cases for which $\Ima \mathcal{G}$ is empty or a singleton, we define the controlling constant to be infinite. 
\end{defn}

\begin{defn}[Multiparameter Betti Grid]

Suppose $M \in \fpmultimodules$ is a finitely presented multiparameter persistence module and that the multiparameter Betti numbers $\cup_i \xi_i(M)$ have $k_j$ distinct gradings in the $j^\text{th}$-coordinate.
Let $\mathcal{G}_M = \Pi_{j=1}^n \mathcal{G}^j_M :  \Pi_{j=1}^n [k_j] \to \mathbb{R}^n$ be the map such that $\cup_i \xi_i(M) \subset \Ima \mathcal{G}_M$. The image of this map shall be referred to as the  \textcolor{Maroon}{multiparameter Betti grid} of $M$ and denoted $\Ima \mathcal{G}_M$.

\end{defn}

The grid $\Grid$ marks the boundaries of the cells within which the module $M$ does not change.

For a finitely presented module $M \in \fpmultimodules$ we define the \textcolor{Maroon}{controlling constant} $c_M$ to be the controlling constant of the multiparameter Betti grid of $M$, $c_M = c(\mathcal{G}_M)$.
This is the minimum non-zero difference between coordinates of the collection of multiparameter Betti numbers $\cup_i \xi_i(M)$.

\begin{defn}[Complexity]

Suppose $M\in \fpmultimodules$ and $F_\bullet$ is a free resolution of $M$. We define the \textcolor{Maroon}{complexity} of the resolution $F_\bullet$ to be the cardinal $\mathcal{C}(F_\bullet) = \sum_{i} | F_i | $, where $|F_i|$ denotes the cardinality of the generating set of $F_i$. We define the \textcolor{Maroon}{complexity} of the persistence module $M$ to be the minimal complexity of a resolution of $M$, $\mathcal{C}(M) =\min_{F_\bullet \to M}\mathcal{C}(F_\bullet)$.

\end{defn}

Observe that the complexity of a module $M$ is realised by the complexity of a minimal resolution and thus $\mathcal{C}(M) =  |\cup_{i=0}^n \xi_i(M)|$.

\begin{lem}[Multiparameter Betti Grid is Determined by Generators and Relations]
\label{lem:xi0xi1detgrid}
Let $M\in \fpmultimodules$ be a finitely presented multiparameter persistence module. For all $j\geq 1$ if $\vec{s} \in \xi_{j+1}(M)$ then there exist $\vec{r}_i\in \xi_j(M)$ such that $\vec{s} = \bigvee_i \vec{r}_i$. Thus the the multiparameter Betti grid of $M$ is determined by $\xi_{0}(M) \cup \xi_{1}(M)$.
\end{lem}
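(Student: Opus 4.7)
The plan is to work with a minimal free resolution $F_\bullet \to M$, in which (by definition of the Tor functor) the support of $\xi_j(M)$ is the set of grades of the generators of $F_j$. Given $\vec{s}\in\xi_{j+1}(M)$, I would fix a generator $s$ of $F_{j+1}$ at grade $\vec{s}$ and expand its image as
$$p_{j+1}(s) \;=\; \sum_{i\in S}\lambda_i\,\vec{x}^{\vec{s}-\vec{r}_i}\cdot e_i,$$
where the $e_i$ are generators of $F_j$ at grades $\vec{r}_i\in\xi_j(M)$ and every $\lambda_i\in\mathbb{F}\setminus\{0\}$. Minimality yields $p_{j+1}(s)\in I\cdot F_j$, which forces $\vec{x}^{\vec{s}-\vec{r}_i}\in I$, equivalently $\vec{r}_i<\vec{s}$, for every $i\in S$.

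The heart of the argument is the claim $\vec{s}=\bigvee_{i\in S}\vec{r}_i$. The bound $\vec{s}\geq\bigvee_i\vec{r}_i$ is immediate. For the reverse I would argue by contradiction: if $\vec{t}:=\bigvee_{i\in S}\vec{r}_i$ were strictly less than $\vec{s}$, then $\vec{x}^{\vec{s}-\vec{t}}\in I$ and the homogeneous element $z:=\sum_{i\in S}\lambda_i\,\vec{x}^{\vec{t}-\vec{r}_i}\cdot e_i\in F_j$ of grade $\vec{t}$ would satisfy $\vec{x}^{\vec{s}-\vec{t}}\cdot z=p_{j+1}(s)$. Applying $p_j$ and using $p_jp_{j+1}=0$ gives $\vec{x}^{\vec{s}-\vec{t}}\cdot p_j(z)=0$; since $P_n$ is an integral domain and $F_{j-1}$ is free, I conclude $p_j(z)=0$. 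Exactness of the resolution then lifts $z$ to a homogeneous $y\in F_{j+1}$ of grade $\vec{t}$, whence $s-\vec{x}^{\vec{s}-\vec{t}}\cdot y\in\ker p_{j+1}\subseteq I\cdot F_{j+1}$ by minimality at level $j+1$, while $\vec{x}^{\vec{s}-\vec{t}}\cdot y\in I\cdot F_{j+1}$ because $\vec{s}-\vec{t}>0$. Summing gives $s\in I\cdot F_{j+1}$, contradicting the fact that $s$ is a basis element of the minimal resolution (its image in $F_{j+1}/I\cdot F_{j+1}$ must be nonzero).

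For the concluding assertion, I would observe that $\text{Grid}_{\mathcal{G}_M}$ is determined coordinate-wise by the distinct values appearing in the $k$-th coordinates of elements of $\bigcup_i\xi_i(M)$, for $k\in[n]$. The first part of the lemma implies that for every $j\geq 1$ any $k$-th coordinate appearing in $\xi_{j+1}(M)$ is the maximum of a finite subset of $k$-th coordinates of elements of $\xi_j(M)$, and hence already appears as such a coordinate. A short induction on $j\geq 1$ therefore reduces the set of coordinate values of $\bigcup_i\xi_i(M)$ to that of $\xi_0(M)\cup\xi_1(M)$, which thus determines the grid.

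The main obstacle I anticipate is the precise bookkeeping around minimality and grade-homogeneity: arranging that the lift $y$ lies at grade $\vec{t}$ and using minimality simultaneously at levels $j$ and $j+1$. Once one trusts the standard structure of minimal free resolutions together with the fact that $P_n$ acts faithfully (by integrality) on every free $F_j$, the contradiction extracts cleanly.
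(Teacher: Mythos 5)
Your proposal is correct and follows essentially the same argument as the paper: pass to a minimal free resolution, factor $p_{j+1}(s)$ through the join grade, use freeness of the target and $p_j p_{j+1}=0$ to see the factored element lies in $\ker p_j$, lift it along the resolution, and derive a contradiction with minimality (the generator would lie in $I\cdot F_{j+1}$). The paper states the case $j=1$ and remarks the rest is identical, whereas you carry out the general $j$ and the coordinate-wise induction for the ``Thus\ldots'' clause explicitly, but the underlying ideas coincide.
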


\begin{proof}
We will just prove the result for $j=1$, since the result for $j>1$ follows identically. Let $F_\bullet$ be a minimal resolution of $M\in \fpmultimodules$. So that in particular $F_2 \twoheadrightarrow \ker(p_1) = \ker(F_1 \to F_0)$. Let $F_1$ have generating set $\{\vec{r}_i\}_{i = 1}^{k_1}$.
Suppose $p_2(\vec{s}) = \sum_i \mu_i \vec{x}^{\vec{s}-\vec{r}_i} \cdot \vec{r}_i \in \ker(p_1)$. Note that we must have $\vec{s}\geq \bigvee_{j: \mu_j \neq 0} \vec{r}_j $. Suppose $\vec{s}> \bigvee_{j: \mu_j \neq 0} \vec{r}_j$.  We observe that 
$$0 = p_1(p_2(\vec{s})) = p_1(\vec{x}^{\vec{s}- \bigvee_{j: \mu_j \neq 0} \vec{r}_j}\cdot \sum_i \mu_i \vec{x}^{\bigvee_{j: \mu_j \neq 0} \vec{r}_j-\vec{r}_i}\cdot \vec{r}_i) = \vec{x}^{\vec{s}- \bigvee_{j: \mu_j \neq 0} \vec{r}_j}\cdot p_1( \sum_i \mu_i \vec{x}^{\bigvee_{j: \mu_j \neq 0} \vec{r}_j-\vec{r}_i}\cdot \vec{r}_i)$$
Since $F_0$ is free it follows that 
$p_1( \sum_i \mu_i \vec{x}^{\bigvee_{j: \mu_j \neq 0} \vec{r}_j-\vec{r}_i}\cdot \vec{r}_i) = 0$ and hence there is some element 
${m}\in F_2$ with $ p_2({m}) = \sum_i \mu_i \vec{x}^{\bigvee_{j: \mu_j \neq 0} \vec{r}_j-\vec{r}_i}\cdot \vec{r}_i$. Thus 
$\vec{s}- \vec{x}^{\vec{s}-  \bigvee_{j: \mu_j \neq 0}\vec{r}_j}\cdot {m} \in \ker(p_2)$ and so $\vec{s}\not \in \xi_2(M)$ else we contradict minimality of the resolution $F_\bullet$.
\end{proof}

The following \Cref{lem:Cancellation} gives a simple criterion for reducing the complexity of a resolution.
Let $\langle \cdot , \cdot \rangle$ denote the point-wise vector space inner product of a finitely generated free module which makes it's generating set an orthonormal basis.

\begin{lem}[Cancellation Lemma]

Let $M\in \fpmultimodules$ be a finitely presented multiparameter persistence module. Suppose $F_\bullet \to M$ is a free resolution such that there exists a generator $\vec{b}$ of $F_{i}$, and a generator $\vec{r}_\vec{b}$ of $F_{i+1}$ with $\text{grade}(\vec{b})=\text{grade}(\vec{r}_\vec{b})$ and $\langle p_{i+1}(\vec{r}_\vec{b}), \vec{b}\rangle \neq 0$, then $F_\bullet$ is not a minimal resolution, and there is a free resolution $F_\bullet' \to M$ with $\xi_0(F'_{i+1}) = \xi_0(F_{i+1}) \setminus \{\vec{r}_\vec{b}\}$ and $\xi_0(F'_i) = \xi_0(F_i) \setminus \{\vec{b}\}$. That is to say we may cancel the pair of generators $\vec{b}$ and $\vec{r}_\vec{b}$ in the resolution.
\label{lem:Cancellation}
\end{lem}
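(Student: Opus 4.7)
The plan is a standard graded change-of-basis argument: perform a sequence of grade-preserving invertible substitutions in $F_i$ and $F_{i+1}$ (and implicitly in $F_{i+2}$) so that the resolution $F_\bullet$ splits, as a chain complex of graded modules, into a direct sum $F_\bullet = F'_\bullet \oplus K_\bullet$ where $K_\bullet$ is the contractible complex of rank-$1$ free modules $K_{i+1}\cong P_n(-\text{grade}(\vec{r}_\vec{b})) \xrightarrow{\ \cong\ } K_i \cong P_n(-\text{grade}(\vec{b}))$ concentrated in degrees $i,i+1$. Since $K_\bullet$ is acyclic, $F'_\bullet$ inherits the same homology as $F_\bullet$, namely $M$ in degree $0$, so $F'_\bullet$ is a free resolution of $M$ with exactly the stated generators removed.

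First, I would verify non-minimality. Because $\text{grade}(\vec{b}) = \text{grade}(\vec{r}_\vec{b})$, the component $\langle p_{i+1}(\vec{r}_\vec{b}), \vec{b}\rangle$ is a degree-$0$ scalar (an element of $\mathbb{F}$), and its non-vanishing means $p_{i+1}(\vec{r}_\vec{b}) \notin I\cdot F_i$. Hence $\ker(p_i) \not\subset I\cdot F_i$, so $F_\bullet$ fails minimality. After rescaling $\vec{r}_\vec{b}$ we may assume this scalar is $1$.

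Next I carry out the basis changes.
\begin{enumerate}[label=(\roman*)]
    \item In $F_i$, replace the generator $\vec{b}$ by $\tilde{\vec{b}} := p_{i+1}(\vec{r}_\vec{b})$. Because $\tilde{\vec{b}}$ has the same grade as $\vec{b}$ and expands as $\vec{b}+(\text{other generators with coefficients in }P_n)$, this is an invertible graded change of basis.
    \item In $F_{i+1}$, for every generator $\vec{r}'\neq \vec{r}_\vec{b}$, let $\lambda_{\vec{r}'}\in P_n$ denote the coefficient of $\tilde{\vec{b}}$ in $p_{i+1}(\vec{r}')$ written in the new basis of $F_i$; this is homogeneous of grade $\text{grade}(\vec{r}')-\text{grade}(\vec{b})\geq 0$. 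Replace $\vec{r}'$ by $\vec{r}'':=\vec{r}'-\lambda_{\vec{r}'}\vec{r}_\vec{b}$, again a graded invertible change of basis. By construction $p_{i+1}(\vec{r}'')$ has no $\tilde{\vec{b}}$-component.
\end{enumerate}
Define $F'_{i+1}$ as the span of the $\vec{r}''$'s, $F'_i$ as the span of the generators of $F_i$ other than $\vec{b}$ (in the new basis, all other generators besides $\tilde{\vec{b}}$), and $F'_j := F_j$ for $j\notin\{i,i+1\}$.

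The main step is to verify that the boundary maps respect this splitting, i.e.\ that $F'_\bullet$ and $K_\bullet := (K_{i+1}\to K_i)$ are sub-chain-complexes. The conditions $p_{i+1}(\vec{r}_\vec{b})=\tilde{\vec{b}}\in K_i$ and $p_{i+1}(\vec{r}'')\in F'_i$ are built in. For the compatibility with $p_{i+2}$ (which is the subtle point), I expand $p_{i+2}(\vec{s})$ in the new basis of $F_{i+1}$ as $\alpha\vec{r}_\vec{b} + \beta$ with $\beta\in F'_{i+1}$, and use that $p_{i+1}(p_{i+2}(\vec{s}))=0$ together with the fact that $p_{i+1}(\beta)\in F'_i$ has vanishing $\tilde{\vec{b}}$-coefficient; this forces the $\tilde{\vec{b}}$-coefficient of $p_{i+1}(\alpha\vec{r}_\vec{b})=\alpha\tilde{\vec{b}}$ to be zero, whence $\alpha=0$ and $p_{i+2}(\vec{s})\in F'_{i+1}$. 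The lower boundary $p_i|_{F'_i}$ needs no adjustment since $K_\bullet$ lives in degrees $i,i+1$ only.

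Finally, since $K_\bullet$ is acyclic, $H_\bullet(F'_\bullet)\cong H_\bullet(F_\bullet)$, so $F'_\bullet$ is a free resolution of $M$ with generating sets $\xi_0(F'_{i+1})=\xi_0(F_{i+1})\setminus\{\vec{r}_\vec{b}\}$ and $\xi_0(F'_i)=\xi_0(F_i)\setminus\{\vec{b}\}$. The only real obstacle is the bookkeeping in step~(ii) and the verification that $p_{i+2}$ lands in $F'_{i+1}$ after the substitutions; once the splitting is established, acyclicity of $K_\bullet$ gives the result immediately.
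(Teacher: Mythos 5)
Your proposal is correct and follows essentially the same strategy as the paper: both cancel the pair $(\vec{b},\vec{r}_\vec{b})$ by exhibiting $F_\bullet$ as the smaller complex $F'_\bullet$ together with a contractible two-term summand, so that $F'_\bullet$ resolves $M$. The paper packages this as an explicit chain homotopy equivalence $(f,g,H)$ rather than a direct-sum splitting, and is terser about the change of basis in $F_{i+1}$ that your step~(ii) carries out to make $p_{i+1}$ restrict to $F'_{i+1}\to F'_i$; your version makes that bookkeeping explicit.
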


\begin{proof}
Let $F'_\bullet$ denote the chain complex of free modules for which $\xi_0(F'_{i+1}) = \xi_0(F_{i+1}) \setminus \{\vec{r}_\vec{b}\}$, $\xi_0(F'_i) = \xi_0(F_i) \setminus \{\vec{b}\}$ and $\xi_0(F'_j) = \xi_0(F_j)$ otherwise, with morphisms inherited from $F_\bullet$.  Define chain maps $f: F_\bullet  \to F'_\bullet $ and $g:F'_\bullet \to F_\bullet $ to be the \textit{identity} on $\vec{a}\not\in\{ \vec{r}_\vec{b},\vec{b}\}$; ($f_i(\vec{a}) =  \vec{a} $, $g_i(\vec{a}) =\vec{a}$). Define $f_{i+1}(\vec{r}_\vec{b}) = 0$ and $f_i(\vec{b}) = \vec{b} - \frac{1}{\langle p_{i+1}(\vec{r}_\vec{b}), \vec{b}\rangle}p_{i+1}(\vec{r}_\vec{b})$.
Then we observe that $f\circ g = \id_{F'_\bullet}$ and, $g\circ f \simeq  \id_{F_\bullet}$ is realised by the homotopy $H:F_\bullet \to F_{\bullet+1} $ which is zero everywhere except $H_i(\vec{b}) = \frac{1}{\langle p_{i+1}(\vec{r}_\vec{b}),\vec{b}\rangle}\vec{r}_\vec{b}$. Indeed $p_{i+1}\circ H_i(\vec{b}) = \frac{1}{\langle p_{i+1}(\vec{r}_\vec{b}),\vec{b}\rangle}p_{i+1}(\vec{r}_\vec{b}) = (\id_{F_i} - g_i\circ f_i)(\vec{b})$ and $ H_{i+1}\circ p_{i+1}(\vec{r}_\vec{b}) = H_{i+1}(\langle p_{i+1}(\vec{r}_\vec{b}),\vec{b}\rangle \vec{b}) = \vec{r}_\vec{b} = (\id_{F_{i+1}} -  g_{i+1}\circ f_{i+1})(\vec{r}_\vec{b})$.
\end{proof}



\section{Merge and Simplification Functors}
\label{sec:M&SFunctors}

In this Section we define two families of endofunctors acting on $\fpmultimodules$. These functors will be used in the proof of our main result \Cref{thm:LocalEquivalence}. We shall show that these functors are well-defined and explicit how these functors affect the presentation of a module.

\subsubsection*{Notation}
Let us establish some notation conventions for this section. We shall denote free multigraded module resolutions as $(F_\bullet, p_\bullet)$ where $p_i : F_i \to F_{i-1}$. We shall denote generators of $F_0$ using the letter $\vec{b}$ for ``births" and we shall denote generators of $F_1$ using the letter $\vec{r}$ for ``relations". We shall simultaneously use $\vec{b}$ to denote both the generator (an element of a graded set) and its grading. If there is ambiguity we shall explicitly write $\text{gr}(\vec{b})$ to denote the grading. Given an element $\vec{a}\in \mathbb{R}^n$ we will denote the slope $\vec{1}$ line through $\vec{a}$ as $\mathcal{L}_\vec{a}$ with $\mathcal{L}_\vec{a}(0)=\vec{a}$ and the isometric embedding with respect to $\|\cdot \|_\infty$. For a set $ S \subset \mathbb{R}^n$ and $\vec{a} \in \mathbb{R}^n$ we use the shorthand $\|\vec{a} - S \|_\infty:= \inf_{\vec{s}\in S}\|\vec{a} - \vec{s}\|_\infty$.

\subsection{Merge Functor}

The first family of functors we define we call merge functors. These are analogous to the merge operations defined in \cite{carriere_structure_2018} for Reeb graphs which are heavily used in \cite{carriere_local_2017}. \Cref{fig:MergedModule} gives a pictorial description of the action of a merge functor on a multiparameter module. The merge functor moves the grades of multiparameter Betti numbers lying close to a grid to lie on that grid. Moving the multiparameter Betti numbers may cause cancellations (\Cref{lem:Cancellation}).  

We give a module-theoretic and category-theoretic formulation of the merge functor and establish that these formulations are equivalent. The module-theoretic formulation clarifies how the functor changes the presentation of a module, whilst the category-theoretic formulation establishes the functoriality and exactness of the merge functor immediately.
 
\begin{figure}
\centering
\begin{subfigure}{.7\textwidth}
  \centering
  \includegraphics[width=0.9\linewidth]{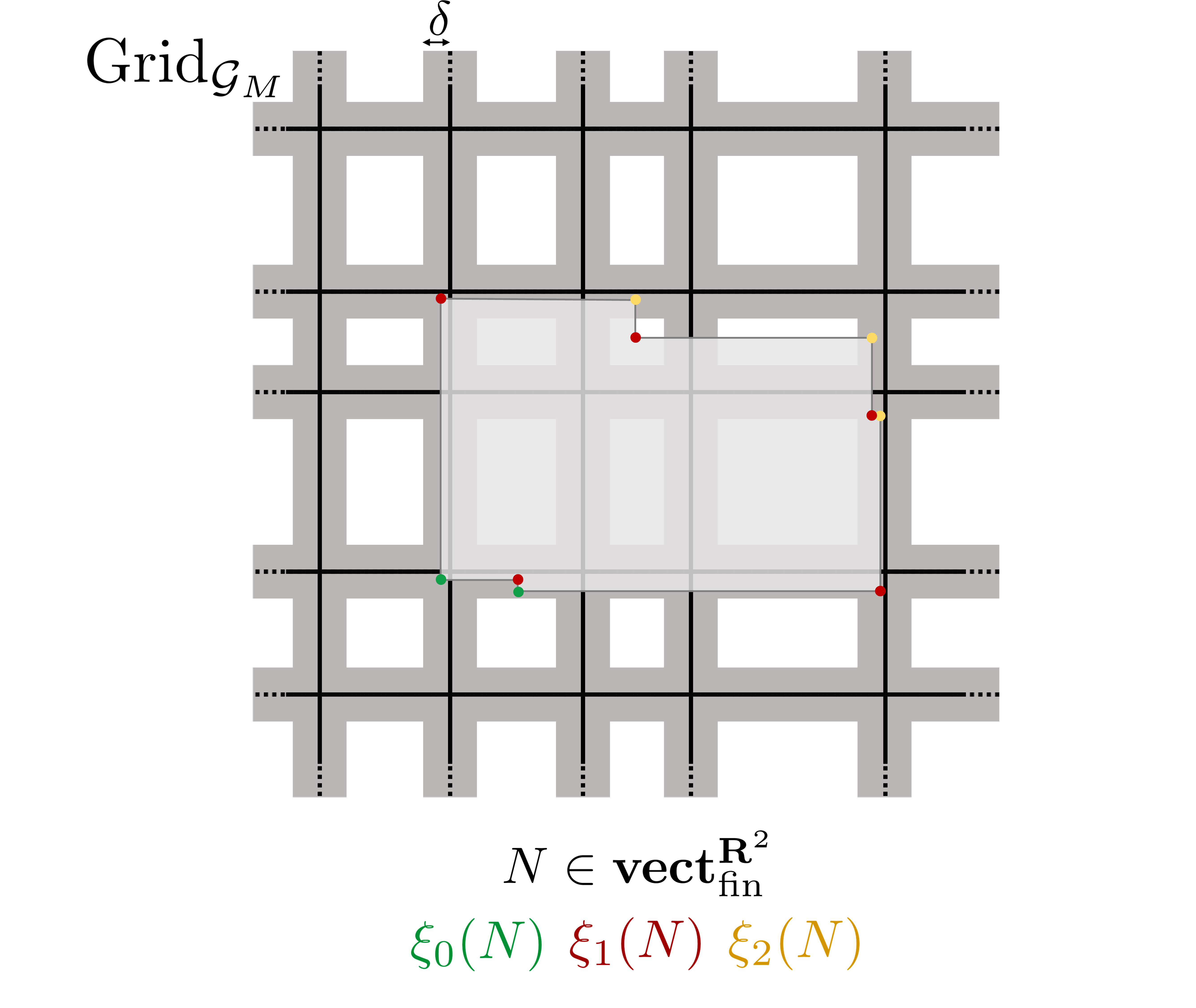}
  \caption{}
  \label{subfig:UnmergedModule}
\end{subfigure}%
\\
\begin{subfigure}{.7\textwidth}
  \centering
  \includegraphics[width=0.9\linewidth]{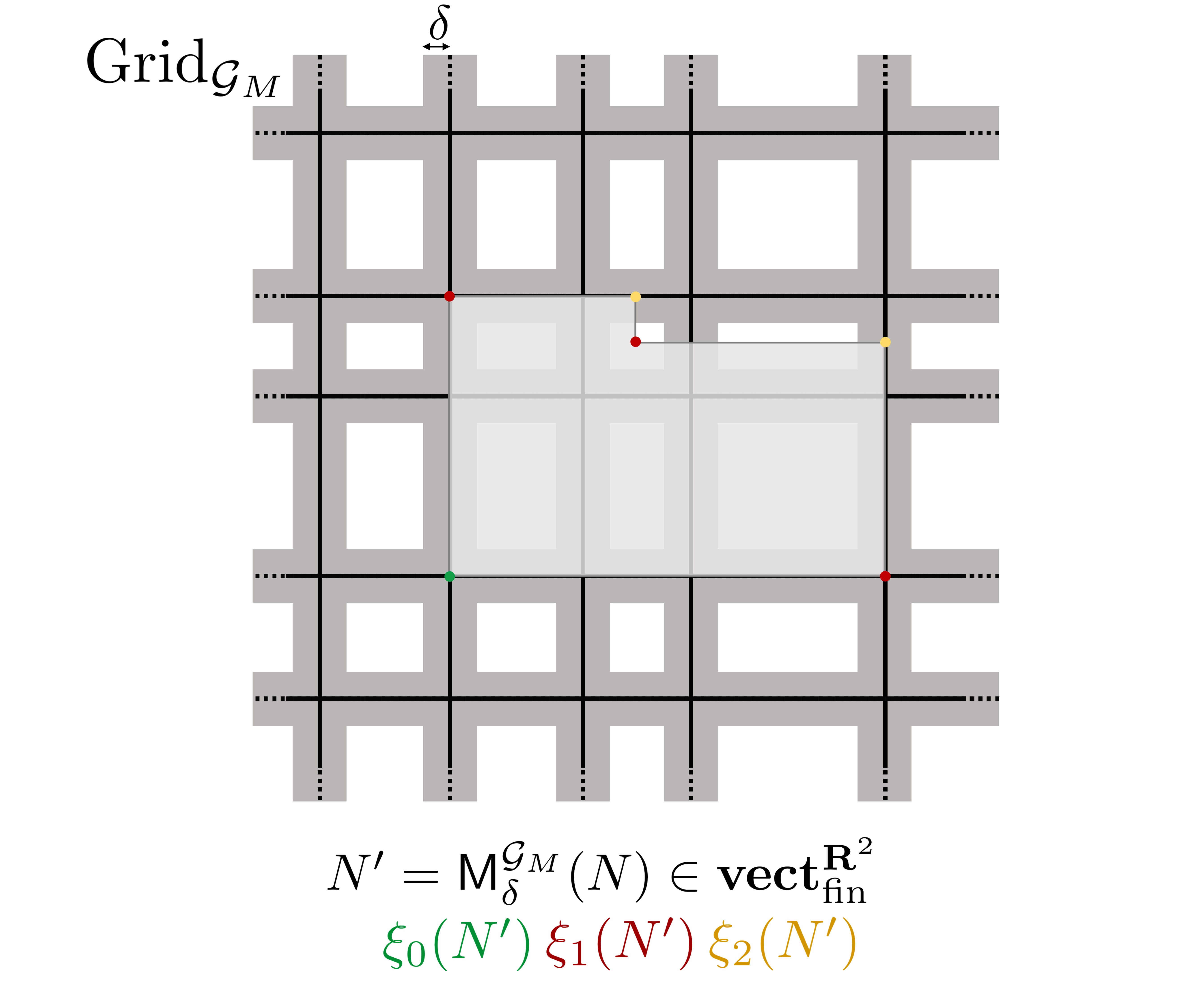}
  \caption{}
  \label{subfig:MergedModule}
\end{subfigure}
\caption{An example of a module under a Merge functor. In (a) we depict an interval module $N$ overlayed on the $\delta$-neighbourhood of the grid $\Grid$, with the multiparameter Betti numbers of $N$ marked. We observe cancellations of the multiparameter Betti numbers under the Merge functor $\textsf{M}_\delta^{\mathcal{G}_M}$, and the multiparameter Betti numbers of $N$ lying $\delta$-close to the grid are merged to lie on the grid.}
\label{fig:MergedModule}
\end{figure}

\begin{defn}[Merge Functions]

Let $\mathcal{G}: [k] \to \mathbb{R}$ be a finite grid with controlling constant $c$, and let $\delta < \frac{c}{2}$. Define the \textcolor{Maroon}{merge functions} $\Merge: \mathbb{R} \to \mathbb{R}$, $\textsf{M}_\delta^{\mathcal{G}_+}: \mathbb{R} \to \mathbb{R}$, $\textsf{M}_{\delta}^{\mathcal{G}_-}  : \mathbb{R} \to \mathbb{R}$ as follows:

\begin{align*} 
\Merge(x) &= 
        \begin{cases}
                        \mathcal{G}(i) & \text{ if } x \in [\mathcal{G}(i) - \delta,\mathcal{G}(i) + \delta]  \\
                        x & \text{ otherwise}
        \end{cases} \\
 \textsf{M}_\delta^{\mathcal{G}_+}(x) &= 
        \begin{cases}
                        \mathcal{G}(i) & \text{ if } x \in [\mathcal{G}(i) - \delta,\mathcal{G}(i)]  \\
                        x & \text{ otherwise}
        \end{cases} \\
\textsf{M}_{\delta}^{\mathcal{G}_-}(x) &= 
        \begin{cases}
                        \mathcal{G}(i) & \text{ if } x \in [\mathcal{G}(i)
                        ,\mathcal{G}(i) + \delta]  \\
                        x & \text{ otherwise}
        \end{cases} 
\end{align*}

\noindent Note that $\Merge = \textsf{M}_{\delta}^{\mathcal{G}_-}\circ \textsf{M}_\delta^{\mathcal{G}_+}$ and the merge functions are projections.

For a multiparameter grid function $\mathcal{G} = \mathcal{G}^1\times ... \times\mathcal{G}^n $ with controlling constant $c$ and $\delta < \frac{c}{2}$. The function $\Merge : \mathbb{R}^n \to \mathbb{R}^n$ is defined using the merge functions $\textsf{M}_{\delta}^{\mathcal{G}_i}$ coordinate-wise, and similarly for $\textsf{M}_\delta^{\mathcal{G}_+}$ and $\textsf{M}_{\delta}^{\mathcal{G}_-}$

\end{defn}

Note that a merge function preserves the partial order of a pair of elements in $\mathbb{R}^n$. That is to say $\vec{a}\leq \vec{b}$ implies  $\Merge(\vec{a})\leq \Merge(\vec{b})$. We first define the action of the merge functor on free modules.

\begin{defn}[Merge Functor for Free Modules]

Let $\mathcal{G}= \Pi_{i=1}^n \mathcal{G}^i: \Pi_{i=1}^n [k_i] \to \mathbb{R}^n$ be a grid with separation constant $c$, and let $\delta < \frac{c}{2}$. We define the $\delta$-merge of the free module $F = \text{Free}[\mathcal{X}]$ with respect to the grid $\mathcal{G}$ to be the free module with grading set regraded via the merge function:  \textcolor{Maroon}{$\Merge(F) = \text{Free}[\Merge(\mathcal{X})]$}, together with the obvious action on morphisms of free modules. 
\end{defn}

Since the merge function preserves the partial order on $\mathbb{R}^n$, $\Merge$ is well defined on morphisms of free modules. It is straight forward to check that $\Merge$ respects identity morphisms for free modules and compositions of morphisms between free modules.

\begin{defn}[Merge Functor]

Let $M\in \fpmultimodules$ and $\mathcal{G}= \Pi_{i=1}^n \mathcal{G}^i: \Pi_{i=1}^n [k_i] \to \mathbb{R}^n$ a grid with separation constant $c$, and let $\delta < \frac{c}{2}$. Suppose $F_1 \to F_0 \to M$ is a presentation of $M$. We define the $\delta$-merge of the module $M$ with respect to the grid $\mathcal{G}$ to be the module \textcolor{Maroon}{$\text{coker } \Merge(F_1 \to F_0)$} i.e. the module $\Merge(M)$ with presentation:
$$\Merge(F_1) \to \Merge(F_0) \to  \Merge(M)$$
We define $\Merge$ on a morphism of multiparameter persistence modules $f:M\to N$ to be the morphism induced from $\Merge$ applied to the free module morphism $f_0$, (the lift of $f$ between the free modules generating $M$ and $N$).
\end{defn}

\begin{prop}[Merge Functor is Well-defined]

$\Merge$ is a well-defined endofunctor on the category $\fpmultimodules$ for all $\delta < \frac{c}{2}$.
\end{prop}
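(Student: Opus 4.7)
The plan is to recognise the merge functor as a left Kan extension along the order-preserving self-map $\Merge: \vec{R}^n \to \vec{R}^n$, from which independence of presentation and functoriality will follow automatically. Since the merge function preserves the partial order on $\mathbb{R}^n$, it induces a functor $\Merge: \vec{R}^n \to \vec{R}^n$ of poset categories. Define $\Merge_{*} := \operatorname{Lan}_{\Merge}$ to be the left Kan extension along this functor, viewed as an endofunctor of $\multimodules$. As a left adjoint to precomposition with $\Merge$, the functor $\Merge_*$ exists, is functorial, and preserves all colimits -- in particular cokernels.

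Next, I would check that $\Merge_*$ agrees with the paper's module-theoretic definition on free modules. Applying the pointwise colimit formula to a one-generator free module $P_n(-\vec{a})$:
\[(\Merge_*\, P_n(-\vec{a}))(\vec{x}) = \operatorname{colim}_{\vec{y}:\, \Merge(\vec{y}) \leq \vec{x}} P_n(-\vec{a})(\vec{y})\]
one verifies that this colimit is one-dimensional exactly when $\vec{x} \geq \Merge(\vec{a})$ (the ``if'' direction by taking $\vec{y} = \vec{a}$; the ``only if'' by order-preservation of $\Merge$), so $\Merge_* P_n(-\vec{a}) \cong P_n(-\Merge(\vec{a}))$. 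By additivity and a direct check on morphisms of free modules (which are determined by their values on generators), $\Merge_*$ restricted to free modules coincides with the paper's definition of $\Merge$.

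Since $\Merge_*$ preserves cokernels, any finite presentation $F_1 \to F_0 \to M \to 0$ is sent to a presentation of $\Merge_*(M)$:
\[\Merge_*(M) \cong \operatorname{coker}(\Merge(F_1) \to \Merge(F_0)) = \Merge(M)\]
This identification of $\Merge$ with $\Merge_*$ immediately yields every property required for well-definedness: the module $\Merge(M)$ is independent of the chosen presentation; it lies in $\fpmultimodules$ because $|\Merge(F_i)| = |F_i| < \infty$; and $\Merge$ acts functorially on morphisms, since the Kan extension automatically takes any lift of $f: M \to N$ between presentations to the canonical morphism $\Merge_*(f) = \Merge(f)$ between cokernels, independently of the chosen lift, and preserves composition and identities.

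The main technical step is the pointwise Kan-extension computation on representables, which rests on order-preservation of $\Merge$. An alternative route, avoiding Kan extensions entirely, is to work directly with two presentations $F_\bullet, F'_\bullet$ of $M$: lift the identity to a chain homotopy equivalence $F_\bullet \simeq F'_\bullet$, and observe that applying $\Merge$ level-wise preserves both chain maps and chain homotopies (since $\Merge$ on free modules is defined purely by regrading generators, which behaves covariantly because the merge function is order-preserving). This yields presentation-independence and well-definedness on morphisms by a standard homological argument.
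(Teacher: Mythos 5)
Your proposal is correct, and it takes a genuinely different route from the paper's. The paper proves this proposition by a hands-on argument: it observes that any two free presentations of $M$ are chain-homotopy equivalent and that $\Merge$ preserves this, then lifts a morphism $f\colon M \to N$ to chain maps $f_0, f_1$ and verifies explicitly that the merged relations of $M$ are sent into the image of the merged relations of $N$. Your route instead packages everything into the single observation that $\Merge \cong \operatorname{Lan}_{\Merge}$, the left Kan extension along the order-preserving map $\Merge\colon \vec{R}^n \to \vec{R}^n$ regarded as a functor of poset categories. Once the pointwise colimit formula is checked on the representables $P_n(-\vec{a})$ (noting that the nonzero part of the comma diagram has $\vec{a}$ as a minimum, so the colimit is one-dimensional), additivity and cocontinuity deliver presentation-independence, well-definedness on morphisms, and preservation of $\fpmultimodules$ all at once, with no case analysis.

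It is worth noting that your Kan-extension description is different from the one the paper itself gives later (its Proposition ``Merge Functors as Kan Extensions''). There the merge functor is realised as the composite $\text{Ran}_{\iota_-}\circ \iota_-^\ast\circ\text{Lan}_{\iota_+}\circ\iota_+^\ast$ of Kan extensions along the \emph{inclusions} $\iota_\pm\colon A_\pm\hookrightarrow\vec{R}^n$ of two subposets. Your $\operatorname{Lan}_{\Merge}$ is a single Kan extension along the merge function itself and is, in the author's view, a cleaner packaging; in particular, because the comma poset $\{\vec{y} : \Merge(\vec{y}) \leq \vec{x}\}$ is closed under joins (as $\Merge$ preserves coordinate-wise joins), the pointwise colimits are filtered, so $\operatorname{Lan}_{\Merge}$ is not only right exact but exact, recovering the exactness statement the paper derives from its two-step Kan extension. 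The alternative route you sketch at the end (lifting the identity to a chain homotopy equivalence and applying $\Merge$ levelwise) is essentially the paper's own argument for presentation-independence. The one place where your write-up is thinner than it should be is the claim that $\operatorname{Lan}_{\Merge}$ agrees with the module-theoretic $\Merge$ on morphisms of free modules: this requires a short check that for the canonical morphism $P_n(-\vec{a}) \to P_n(-\vec{b})$ the induced map of colimits is again canonical with scalar $1$, which holds because on the nonzero components the pointwise morphisms are identities; once stated, the general case follows by $\mathbb{F}$-linearity and additivity.
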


\begin{proof}
We first observe that $\Merge$ is independent of the presentation chosen. Any presentation gives rise to a free resolution and two such free resolutions of a module are homotopy equivalent. Since $\Merge$ respects compositions and identity maps of free modules the free resolutions under the map $\Merge$ remain homotopy equivalent.

Let us now show that $\Merge$ is well defined on morphisms. Suppose $f:M\to N$ is a morphism of modules, and suppose $F_1 \to F_0 \to M$ is a presentation of $M$ and $G_1 \to G_0 \to N$ is a presentation of $N$. Note that $f$ lifts to a chain map between these presentations:
\[
\begin{tikzcd}
F_1 \arrow[d, "f_1"] \arrow[r, "p_1"] & F_0 \arrow[d, "f_0"] \arrow[r, "p_0"] & M \arrow[d, "f"]
\\
G_1  \arrow[r, "q_1"] & G_0  \arrow[r, "q_0"] & N
\end{tikzcd}
\]
Let $F_0 = \text{Free}(\{\vec{b}^M_i\})$, $G_0 = \text{Free}(\{\vec{b}^N_j\})$, $F_1 = \text{Free}(\{\vec{r}^M_l\})$, $G_1 = \text{Free}(\{\vec{r}^N_m\})$, and recall that $f$ is determined by the action of $f_0$ on the generators $\{\vec{b}^M_i\}$. Suppose that  $f_0(\vec{b}^M_i) = \sum_j c_{i,j}\vec{x}^{\vec{b}^M_i- \vec{b}^N_j}\cdot\vec{b}^N_j $
The morphism $\Merge(f): \Merge(M) \to \Merge(N)$ is defined to act on generators as:
$$\Merge(f)(\Merge(\vec{b}^M_i)) = \sum_j c_{i,j}\vec{x}^{\Merge(\vec{b}^M_i)- \Merge(\vec{b}^N_j)}\cdot\Merge(\vec{b}^N_j)$$




To check $\Merge(f)$ is well defined for arbitrary $f$ it suffices to show that $\Merge(f_0\circ p_1)(\Merge(\vec{r}^M_l))\in \Ima \Merge(q_1)$. 
As $f$ is well defined we have that $f_0\circ p_1(\vec{r}^M_l)\in \Ima q_1$:
\begin{align*}
    f_0\circ p_1(\vec{r}^M_l) &= f_0(\sum_i c_{l,i} \vec{x}^{\vec{r}^M_l- \vec{b}^M_i}\cdot\vec{b}^M_i ) = \sum_j \sum_i c_{i,j}c_{l,i} \vec{x}^{\vec{r}^M_l- \vec{b}^N_j}\cdot\vec{b}^N_j \\&= q_1 (\sum_m a_m \vec{x}^{\vec{r}^M_l - \vec{r}^N_m} \cdot \vec{r}^N_m) = \sum_j\sum_m a_m c_{m,j} \vec{x}^{\vec{r}^N_m- \vec{b}^N_j}\cdot\vec{b}^N_j
\end{align*}

\noindent For some scalars $a_m$. It follows that:
\begin{align*}
\Merge(f_0\circ p_1)(\Merge(\vec{r}^M_l)) &= \sum_j\sum_m a_m c_{m,j} \vec{x}^{\Merge(\vec{r}^N_m)- \Merge(\vec{b}^N_j)}\cdot\Merge(\vec{b}^N_j)
\\
&=  \Merge(q_1) (\sum_m a_m \vec{x}^{\Merge(\vec{r}^M_l) - \Merge(\vec{r}^N_m)} \cdot \Merge(\vec{r}^N_m))
\end{align*}
and hence $\Merge(f)$ is well defined.


\end{proof}

Another important property of the family of merge endofunctors is that $\Merge$ perturbs multiparameter persistence modules by no more than $\delta$ in the interleaving distance. 

\begin{prop}[$d_I(\Merge(M),M) \leq \delta$]

For any grid $\mathcal{G}$ and $\delta< \frac{c(\mathcal{G})}{2}$, if $M \in \fpmultimodules$ then $d_I(\Merge(M),M) \leq \delta$.
\end{prop}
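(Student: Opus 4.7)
The plan is to construct an explicit $\delta$-interleaving $(f,g)$ between $M$ and $\Merge(M)$ at the level of a presentation and then descend to the cokernels, exploiting the basic fact that $\|\Merge(\vec{a})-\vec{a}\|_\infty \leq \delta$ for every $\vec{a}\in\mathbb{R}^n$ (immediate from the coordinate-wise definition of the merge function).

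Fix a finite presentation $F_1 \xrightarrow{p_1} F_0 \twoheadrightarrow M$ with generators $\{\vec{b}_i\}$ for $F_0$ and $\{\vec{r}_l\}$ for $F_1$, so that $\Merge(M)$ has presentation $\Merge(F_1) \xrightarrow{\Merge(p_1)} \Merge(F_0) \twoheadrightarrow \Merge(M)$. Because $\Merge(\vec{a})-\vec{a}$ lies in $[-\delta,\delta]^n$, both $\vec{x}^{\vec{a}+\delta\vec{1}-\Merge(\vec{a})}$ and $\vec{x}^{\Merge(\vec{a})+\delta\vec{1}-\vec{a}}$ are legitimate elements of $P_n$, so I would define $P_n$-linear maps on generators
\begin{align*}
    f_i : F_i \to \Merge(F_i)T_\delta, \qquad & \vec{b} \mapsto \vec{x}^{\vec{b}+\delta\vec{1}-\Merge(\vec{b})}\cdot \Merge(\vec{b}), \\
    g_i : \Merge(F_i) \to F_i T_\delta, \qquad & \Merge(\vec{b}) \mapsto \vec{x}^{\Merge(\vec{b})+\delta\vec{1}-\vec{b}}\cdot \vec{b},
\end{align*}
for $i=0,1$ (substituting $\vec{r}$ for $\vec{b}$ when $i=1$).

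Next I would verify that $f_\bullet$ and $g_\bullet$ are chain maps so that they descend to morphisms $f: M \to \Merge(M)T_\delta$ and $g: \Merge(M) \to MT_\delta$. Writing $p_1(\vec{r}_l) = \sum_i c_{l,i}\vec{x}^{\vec{r}_l-\vec{b}_i}\cdot \vec{b}_i$, the exponents in $f_0\circ p_1(\vec{r}_l)$ and $(\Merge(p_1)T_\delta)\circ f_1(\vec{r}_l)$ telescope so that both expressions equal $\sum_i c_{l,i}\vec{x}^{\vec{r}_l+\delta\vec{1}-\Merge(\vec{b}_i)}\cdot \Merge(\vec{b}_i)$; the same computation works for $g_\bullet$.

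Finally I would check the interleaving coherence on generators: on $\vec{b}\in F_0$,
\[
    T^*_\delta(g_0)\circ f_0(\vec{b}) \;=\; \vec{x}^{\vec{b}+\delta\vec{1}-\Merge(\vec{b})}\cdot \vec{x}^{\Merge(\vec{b})+\delta\vec{1}-\vec{b}}\cdot \vec{b} \;=\; \vec{x}^{2\delta\vec{1}}\cdot \vec{b} \;=\; \varphi_{2\delta}^{F_0}(\vec{b}),
\]
which descends to $\varphi_{2\delta}^M$; the symmetric identity $T^*_\delta(f_0)\circ g_0 = \varphi_{2\delta}^{\Merge(F_0)}$ is obtained identically. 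This exhibits $(f,g)$ as a $\delta$-interleaving and hence $d_I(M,\Merge(M))\leq \delta$. The only non-trivial bookkeeping is the chain-map check that ensures $f_0$ and $g_0$ descend to the cokernels; once the monomial exponents are written out they telescope in one line, so I do not anticipate a substantive obstacle.
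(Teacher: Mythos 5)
Your proof is correct and follows essentially the same route as the paper: the same explicit interleaving morphisms on generators, with the only cosmetic difference being that you package the "respects relations" verification as a chain-map check that descends to cokernels, whereas the paper checks directly that $f$ and $g$ send $\Ima(p_1)$ into $\Ima(\Merge(p_1))$ and vice versa. The two verifications are equivalent, and your telescoping computation is correct.
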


\begin{proof}
We will show that $d_I(\Merge(M),M) \leq \delta$ via an explicit interleaving.
Let $F_1\to_{p_1} F_0\to_{p_0} M$ be a presentation of $M$ where $F_0$ is free on the set $\{\vec{b}_i\}$ and $F_1$ is free on the set $\{\vec{r}_j \}$. Consider the maps $f: M \to \Merge(M)T_\delta$ and $g :\Merge(M) \to MT_\delta$ defined on generators by:

$$ f(\vec{b}_i) = \vec{x}^{\delta\vec{1}+ (\vec{b}_i- \Merge(\vec{b}_i))} \cdot \Merge(\vec{b}_i), \ g(\Merge(\vec{b}_i)) = \vec{x}^{\delta\vec{1}+(( \Merge(\vec{b}_i)- \vec{b}_i)} \cdot \vec{b}_i $$

If these morphisms are well-defined it is clear they comprise a $\delta$-interleaving since their composition yields $g\circ f(\vec{b}_i) = \vec{x}^{2\delta \vec{1}}\cdot \vec{b}_i $ and $f\circ g(\Merge(\vec{b}_i)) = \vec{x}^{2\delta \vec{1}}\cdot \Merge(\vec{b}_i) $. 
Observe that $\delta\vec{1}\geq  \vec{b}_i- \Merge(\vec{b}_i) \geq -\delta\vec{1}$ and so it suffices to check that the proposed morphisms respect relations. If 
$\sum_i \alpha_i \vec{x}^{\vec{s}-\vec{b}_i} \vec{b}_i \in \Ima(p_1)$
then 
$\sum_i \alpha_i \vec{x}^{\vec{s}-\vec{b}_i} \vec{b}_i = \sum_j \lambda_j \vec{x}^{\vec{s}-\vec{r}_j} p_1(\vec{r}_j)$ for some scalars $\lambda_j$. 
We observe that:
\begin{align*}
f(\sum_i \alpha_i \vec{x}^{\vec{s}-\vec{b}_i} \vec{b}_i) &=
\sum_i \alpha_i \vec{x}^{\vec{s}-\vec{b}_i+\delta\vec{1}+ (\vec{b}_i- \Merge(\vec{b}_i))} \cdot \Merge(\vec{b}_i) \\&=
\sum_i \alpha_i \vec{x}^{\vec{s}+\delta\vec{1}- \Merge(\vec{b}_i)} \cdot \Merge(\vec{b}_i) 
\\ &= \sum_j \lambda_j \vec{x}^{\vec{s}+\delta\vec{1}- \Merge(\vec{r}_j)} \cdot \Merge(p_1)( \Merge(\vec{r}_j)) \in \Ima(\Merge(p_1))
\end{align*}
Similarly, if $\sum_i \alpha_i \vec{x}^{\vec{s}-\Merge(\vec{b}_i)} \cdot \Merge(\vec{b}_i) \in \Ima(\Merge(p_1))$ 
then 
$\sum_i \alpha_i \vec{x}^{\vec{s}-\Merge(\vec{b}_i)}   \cdot \Merge(\vec{b}_i) = \sum_j \lambda_j \vec{x}^{\vec{s}-\Merge(\vec{r}_j)}  \cdot \Merge(p_1)(\Merge(\vec{r}_j))$. We observe that:

\begin{align*}
g(\sum_i \alpha_i \vec{x}^{\vec{s}-\Merge(\vec{b}_i)} \cdot \Merge(\vec{b}_i)) &=
\sum_i \alpha_i \vec{x}^{\vec{s}-\Merge(\vec{b}_i)+\delta\vec{1}+ ( \Merge(\vec{b}_i)- \vec{b}_i)} \cdot \vec{b}_i \\&=
\sum_i \alpha_i \vec{x}^{\vec{s}+\delta\vec{1}- \vec{b}_i} \cdot \vec{b}_i 
\\ &= \sum_j \lambda_j \vec{x}^{\vec{s}+\delta\vec{1}- \vec{r}_j} \cdot p_1( \vec{r}_j) \in \Ima(p_1)
\end{align*}

Hence the proposed interleaving morphisms are well-defined and exhibit that $d_I(\Merge(M),M) \leq \delta$.


\end{proof}

One can directly show exactness of the merge functors using our modules theoretic definition, but this follows more naturally from a category-theoretic perspective of the merge functors.

\begin{prop}[Merge Functors as Kan Extensions]

Let $\mathcal{G}$ be a grid and $\delta <\frac{c(\mathcal{G})}{2}$. The merge endofunctors $\textsf{M}_\delta^{\mathcal{G}_+}$ and $\textsf{M}_\delta^{\mathcal{G}_-}$ acting on $\fpmultimodules$ can be realised as the following left and right Kan extensions respectively:

\[
\begin{tikzcd}
\vec{A_+} \arrow[r, "M\circ \iota_+"] \arrow[d,"\iota_+"] &\vec{vect} \\
\vec{R}^n \ar[ur,dashed,"\textsf{M}_\delta^{\mathcal{G}_+}(M)" {rotate = 35, anchor = north}]
\end{tikzcd}
\hspace{3em}
\begin{tikzcd}
\vec{A_-} \arrow[r, "M\circ \iota_-"] \arrow[d,"\iota_-"] &\vec{vect} \\
\vec{R}^n \ar[ur,dashed,"{\textsf{M}_\delta^{\mathcal{G}_-}(M)}" {rotate = 35, anchor = north}] 
\end{tikzcd}
\]

Where $A_+ := \mathbb{R}^n \setminus \{\textsf{M}_\delta^{\mathcal{G}_+}(\vec{a}) \neq \vec{a}\}$ and $A_- := \mathbb{R}^n \setminus (\{\textsf{M}_\delta^{\mathcal{G}_-}(\vec{a}) \neq \vec{a}\} \cup \text{Grid}_\mathcal{G})$ equipped with inclusions $\iota_+$ and $\iota_-$ into $\mathbb{R}^n$.

\label{prop:MergeAreKanExt}
\end{prop}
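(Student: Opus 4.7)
The plan is to verify each Kan extension identity pointwise: for the left case, by reducing to free modules via exactness of restriction and of the left Kan extension functor; for the right case, by direct inspection since right Kan extensions do not commute with cokernels in general. Starting with $\textsf{M}_\delta^{\mathcal{G}_+}$ as a left Kan extension, I would first handle the free module case. For $F=\text{Free}[\mathcal{X}]$, the pointwise colimit formula $\text{Lan}_{\iota_+}(F\circ\iota_+)(\vec{a}) = \text{colim}_{y\in A_+,\, y\leq\vec{a}} F(y)$ simplifies to a union (the transition maps are subspace inclusions), identifying with the $\mathbb{F}$-span of $\{j\in\mathcal{X} : \exists y\in A_+\text{ with } \text{gr}(j)\leq y\leq\vec{a}\}$. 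To match this with the basis $\{j : \textsf{M}_\delta^{\mathcal{G}_+}(\text{gr}(j))\leq \vec{a}\}$ of $\textsf{M}_\delta^{\mathcal{G}_+}(F)(\vec{a})$, the key observation is that $\textsf{M}_\delta^{\mathcal{G}_+}$ is an order-preserving idempotent projection with image $A_+$: setting $y := \textsf{M}_\delta^{\mathcal{G}_+}(\text{gr}(j))$ witnesses one direction, while any witness $y\in A_+$ yields $\textsf{M}_\delta^{\mathcal{G}_+}(\text{gr}(j))\leq \textsf{M}_\delta^{\mathcal{G}_+}(y) = y\leq \vec{a}$. For general finitely presented $M$, I would use that restriction $(-)\circ\iota_+$ is exact and $\text{Lan}_{\iota_+}$ is right exact (being a left adjoint to restriction): both applied to a finite presentation $F_1\to F_0\to M\to 0$ yield right-exact sequences whose first two terms agree by the free case, forcing the cokernels $\text{Lan}_{\iota_+}(M\circ\iota_+)$ and $\textsf{M}_\delta^{\mathcal{G}_+}(M)$ to be naturally isomorphic.

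For $\textsf{M}_\delta^{\mathcal{G}_-}$ as a right Kan extension, since the adjoint trick is unavailable (indeed $\iota_-$ does not admit a right adjoint because $A_-$ omits the grid points that would be the required maxima), I would verify the identity $\text{Ran}_{\iota_-}(M\circ\iota_-)(\vec{a}) = \lim_{y\in A_-,\, y\geq\vec{a}} M(y) \cong \textsf{M}_\delta^{\mathcal{G}_-}(M)(\vec{a})$ pointwise. For $\vec{a}\in A_-$ both sides equal $M(\vec{a})$: the indexing filter has $\vec{a}$ as its minimum, and since no coordinate of $\vec{a}$ lies in any moved interval $[\mathcal{G}^j(k), \mathcal{G}^j(k)+\delta]$, a coordinate-wise check shows $\textsf{M}_\delta^{\mathcal{G}_-}(\vec{c})\leq\vec{a}$ iff $\vec{c}\leq\vec{a}$ for every generator $\vec{c}$ appearing in a presentation of $M$. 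For $\vec{a}$ in the excluded region, I would argue that the indexing filter lies in an open region where the finitely presented module $M$ is locally constant, using the constraint $\delta < c(\mathcal{G})/2$ together with the cell structure of the Betti grid of $M$. The limit then stabilizes to that constant value, which I would identify with $\textsf{M}_\delta^{\mathcal{G}_-}(M)(\vec{a})$ by tracking how $\textsf{M}_\delta^{\mathcal{G}_-}$ reindexes generators in the presentation.

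The main obstacle is the right Kan case when the indexing filter $\{y\in A_-: y\geq\vec{a}\}$ lacks a minimum (precisely when $\vec{a}$ lies in the closure of $\mathbb{R}^n \setminus A_-$): the Betti grid of $M$ need not align with $\mathcal{G}$, so one must carefully localize to a neighborhood avoiding all Betti numbers of $M$ in order to invoke local constancy. The choice to exclude both the grid and the entire moved region from $A_-$, combined with $\delta < c(\mathcal{G})/2$, is precisely what buys the separation needed to compute the limit as a value of $M$ on such a cell.
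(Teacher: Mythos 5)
Your proposal is correct and, for the $\textsf{M}_\delta^{\mathcal{G}_+}$ case, takes a genuinely different and arguably cleaner route than the paper. The paper proves both identities pointwise, using the finitely presented hypothesis to produce an explicit approximating element $\floor{\vec{a}}_{A_+}$ at which the filtered colimit stabilizes, then doing a case analysis on whether $\vec{a}\in A_+$ to identify the stabilized value with $\textsf{M}_\delta^{\mathcal{G}_+}(M)(\vec{a})$. Your argument instead computes the Kan extension only on free modules (where the colimit collapses to a span, using exactly the right observation that $\textsf{M}_\delta^{\mathcal{G}_+}$ is an order-preserving idempotent projection onto $A_+$, so $y:=\textsf{M}_\delta^{\mathcal{G}_+}(\text{gr}(j))$ is the minimal witness), and then invokes exactness of restriction together with right exactness of $\text{Lan}_{\iota_+}$ as a left adjoint to push the identification through a finite presentation. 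This sidesteps the paper's case analysis entirely, since filtered colimits in $\vec{vect}$ are exact. You do owe yourself the brief check that the free-module isomorphism is natural in morphisms of free modules, but that is routine.

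For $\textsf{M}_\delta^{\mathcal{G}_-}$ you correctly observe that the adjoint trick is unavailable and fall back to a pointwise argument, which is essentially what the paper does with its $\ceil{\vec{a}}^{A_-}$. Two imprecisions worth flagging. First, ``the indexing filter lies in an open region where $M$ is locally constant'' is not literally true: the upset $\{\vec{b}\in A_- : \vec{b}\geq\vec{a}\}$ is large and certainly straddles cells of $M$'s Betti grid; what stabilizes the cofiltered limit is that a \emph{cofinal} sub-filter near the infimum $\ceil{\vec{a}}^{A_-}$ lies in a single cell, which is where finite presentability of $M$ (finitely many Betti hyperplanes) enters. You signal awareness of this in the closing paragraph, so the gap is one of phrasing, not of idea. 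Second, the adjoint whose absence forces the pointwise argument is the \emph{left} adjoint of $\iota_-$ (the assignment $\vec{a}\mapsto\min\{\vec{b}\in A_-:\vec{b}\geq\vec{a}\}$, which would make $\text{Ran}_{\iota_-}$ a precomposition), not a right adjoint; the missing element is the infimum of the upset, not the maximum of a downset. Neither slip affects the substance of the argument.
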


\begin{proof}
As in the statement of the proposition let $A_+ := \mathbb{R}^n \setminus \{\textsf{M}_\delta^{\mathcal{G}_+}(\vec{a}) \neq \vec{a}\}$ and $A_- := \mathbb{R}^n \setminus (\{\textsf{M}_\delta^{\mathcal{G}_-}(\vec{a}) \neq \vec{a}\} \cup \text{Grid}_\mathcal{G})$ equipped with inclusions $\iota_+$ and $\iota_-$ into $\mathbb{R}^n$. Denote the left Kan extension of $M_+ = M\circ \iota_+$ along $\iota_+$ by $\text{Lan}_{\iota_+}(M_+)$ and the right Kan extension of $M_- = M \circ \iota_-$ along $\iota_-$ by $\text{Ran}_{\iota_-}(M_-)$:

\[
\begin{tikzcd}
\vec{A_+} \arrow[r, "M_+"] \arrow[d,"\iota_+"] &\vec{vect} \\
\vec{R}^n \ar[ur,dashed,"\text{Lan}_{\iota_+}(M_+)" {rotate = 35, anchor = north}]
\end{tikzcd}
\hspace{3em}
\begin{tikzcd}
\vec{A_-} \arrow[r, "M_-"] \arrow[d,"\iota_-"] &\vec{vect} \\
\vec{R}^n \ar[ur,dashed,"{\text{Ran}_{\iota_-}(M_-)}" {rotate = 35, anchor = north}] 
\end{tikzcd}
\]
These Kan extensions exist since $A_+$ and $A_-$ are small and $\vec{Vect}$ is bicomplete. It transpires that the Kan extensions remain pointwise finite dimensional. More precisely, we can realise the Kan extensions as the following limits:

$$ \text{Lan}_{\iota_+}(M_+)(\vec{a}) = \varinjlim_{ \iota_+(\vec{b})\to \vec{a}} M_+(\vec{b}) $$

$$ \text{Ran}_{\iota_-}(M_-)(\vec{a}) = \varprojlim_{ \vec{a} \to \iota_-(\vec{b})} M_-(\vec{b})  $$

Since $M$ is finitely presented, for all $\vec{a} \in \mathbb{R}^n$ there exists an element $\floor{\vec{a}}_{A_+} \in A_+$ such that $\floor{\vec{a}}_{A_+} \leq \vec{a}$, $\|\vec{a} - \floor{\vec{a}}_{A_+}\|_\infty  \leq \delta$ and for all $\vec{b}\in A_+$ with $\floor{\vec{a}}_{A_+}\leq \vec{b} \leq \vec{a}$ the morphism $M_+(\floor{\vec{a}}_{A_+}\leq \vec{b})$ is an isomorphism.
Similarly, for all $\vec{a} \in \mathbb{R}^n$ there exists an element $\ceil{\vec{a}}^{A_-} \in A_-$ such that $\ceil{\vec{a}}^{A_-} \geq \vec{a}$ , $\|\ceil{\vec{a}}^{A_-}- \vec{a} \|_\infty  \leq \delta$ and for all $\vec{b}\in A_-$ with $\ceil{\vec{a}}^{A_-}\geq \vec{b} \geq \vec{a}$ the morphism $M_+(\vec{b}\leq \ceil{\vec{a}}^{A_-})$ is an isomorphism. Thus we realise for all $\vec{a} \in \mathbb{R}^n$:

$$ \text{Lan}_{\iota_+}(M_+)(\vec{a}) \cong M_+(\floor{\vec{a}}_{A_+}) \hspace{1cm} \text{Ran}_{\iota_-}(M_-)(\vec{a}) \cong M_-(\ceil{\vec{a}}^{A_-}) $$

We will show that $\textsf{M}_\delta^{\mathcal{G}_+}(M)(\vec{a})$ is naturally isomorphic to $M_+(\floor{\vec{a}}_{A_+})$ for all $\vec{a} \in \mathbb{R}^n$:

If $\vec{a} \in A_+$ then $M_+(\floor{\vec{a}}_{A_+}\leq \vec{a})$ is an isomorphism. We also know that the grades of generators in the free resolution which are less than $\vec{a}$ remain less than $\vec{a}$ under the merge function $\textsf{M}_\delta^{\mathcal{G}_+}$ and no generator which was strictly greater than $\vec{a}$ becomes less than $\vec{a}$ under the merge function thus $M(\vec{a}) \cong \textsf{M}_\delta^{\mathcal{G}_+}(M)(\vec{a})$. Hence we have a sequence of isomorphisms:
$$\text{Lan}_{\iota_+}(M_+)(\vec{a}) \cong M_+(\floor{\vec{a}}_{A_+}) \cong M_+(\vec{a}) \cong  M(\vec{a}) \cong  \textsf{M}_\delta^{\mathcal{G}_+}(M)(\vec{a}) $$
Alternatively if $\vec{a} \notin A_+$ then observe that $\textsf{M}_\delta^{\mathcal{G}_+}(M)(\floor{\vec{a}}_{A_+} \leq \vec{a})$ is an isomorphism and hence we have a sequence of isomorphisms:

$$ \textsf{M}_\delta^{\mathcal{G}_+}(M)(\vec{a}) \cong \textsf{M}_\delta^{\mathcal{G}_+}(M)(\floor{\vec{a}}_{A_+}) \cong M(\floor{\vec{a}}_{A_+}) \cong M_+(\floor{\vec{a}}_{A_+}) \cong  \text{Lan}_{\iota_+}(M_+)(\vec{a})$$

Thus for all $M\in \fpmultimodules$ we have shown that $\textsf{M}_\delta^{\mathcal{G}_+}(M) \cong \text{Lan}_{\iota_+}(M\circ \iota_+)$. Showing that $\textsf{M}_\delta^{\mathcal{G}_-}(M) \cong \text{Ran}_{\iota_-}(M\circ \iota_-)$ proceeds as the dual of the argument above mutatis mutandis.
\end{proof}

Filtered (co)limits are exact in $\vec{vect}$. A quick check establishes that a sequence of multiparameter persistence modules is exact if and only if it is exact pointwise. Thus \Cref{prop:MergeAreKanExt} yields functoriality, well-definedness and exactness of the merge functor when realised as the following composition: $$\Merge(M) = \text{Ran}_{\iota_-}(\iota_-^\ast(\text{Lan}_{\iota_+}(\iota~^\ast_+(M))). $$

A consequence of exactness is that, for any free resolution of $M$ there is a free resolution of $\Merge(M)$ of the same complexity. Hence a merge functor never increases the complexity of a multiparameter persistence module.

\subsection{Simplification Functor}

The second family of functors we define simplify a module by removing ``$\varepsilon$-small features" of the module. A pictorial description of a simplification functor applied to an interval decomposable 2-parameter persistence module is depicted in \Cref{fig:Simplification}.

\begin{figure}[t]
    \centering
    \begin{subfigure}{0.3\textwidth}
        \centering
        \includegraphics[width=1\textwidth]{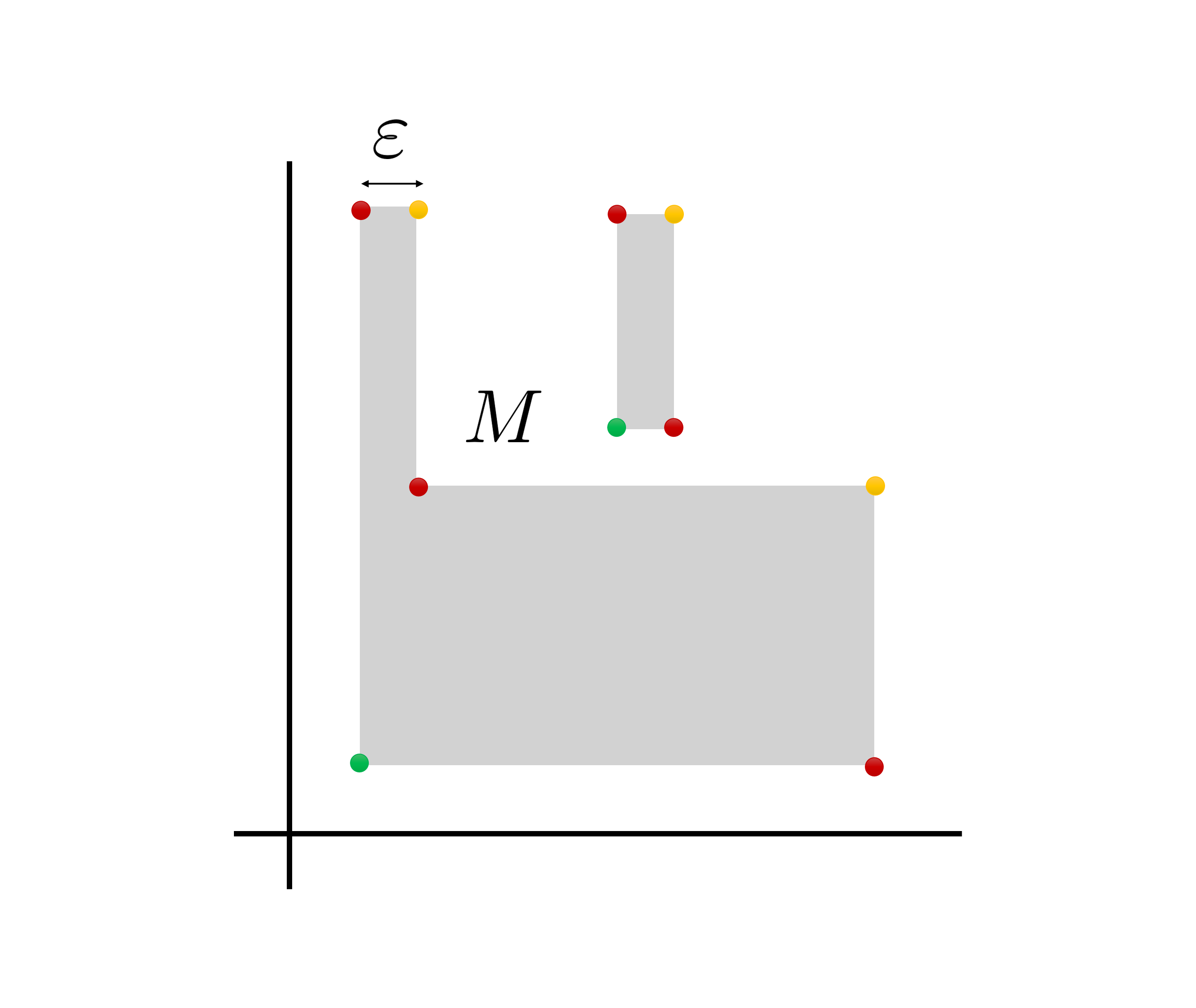} 
        \caption{A 2-parameter interval decomposable module $M$.}
    \end{subfigure}\hfill
    \begin{subfigure}{0.3\textwidth}
        \centering
        \includegraphics[width=1\textwidth]{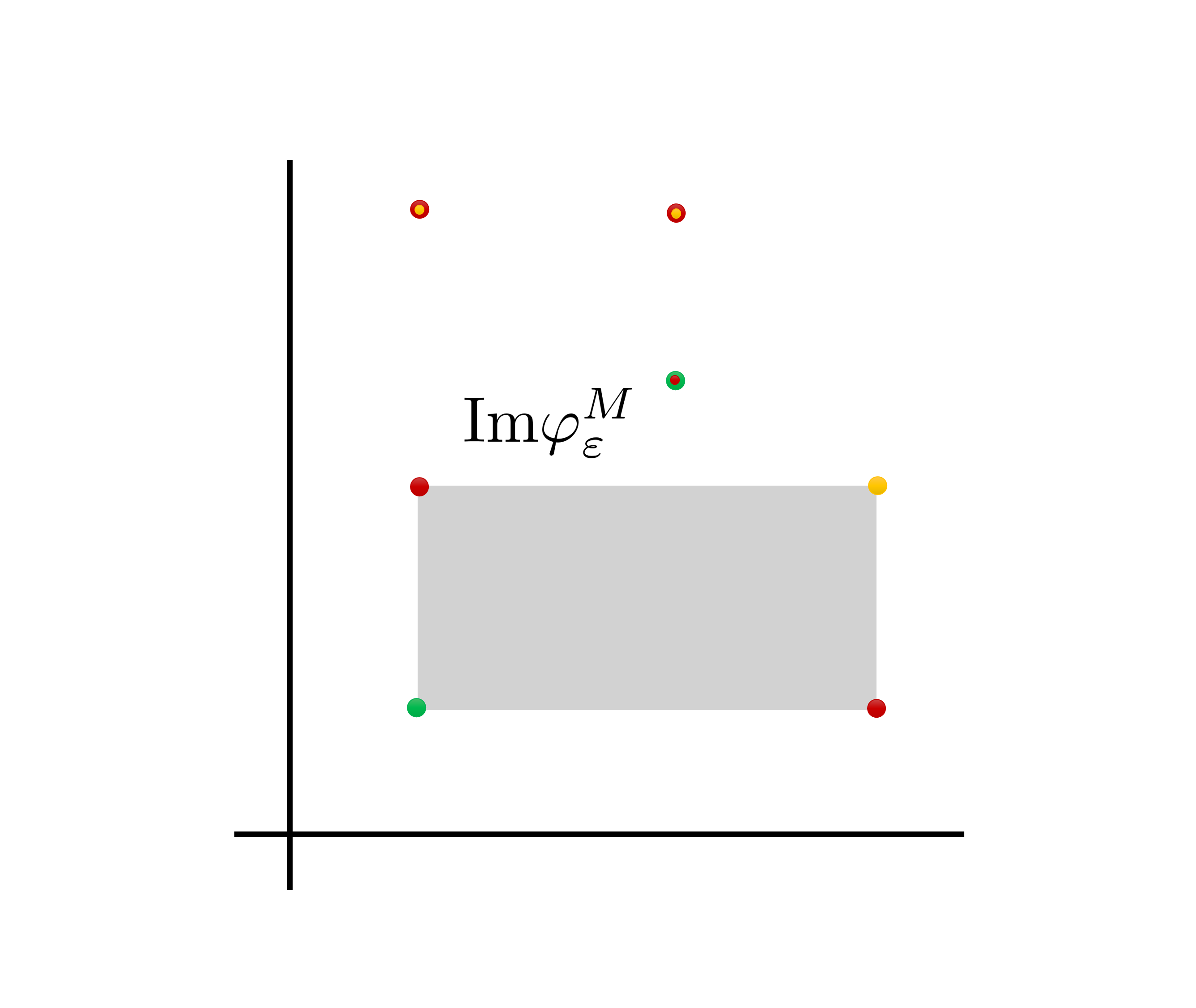} 
        \caption{The image of the internal morphism $\varphi_\varepsilon^M$.}
    \end{subfigure}\hfill
    \begin{subfigure}{0.3\textwidth}
        \centering
        \includegraphics[width=1\textwidth]{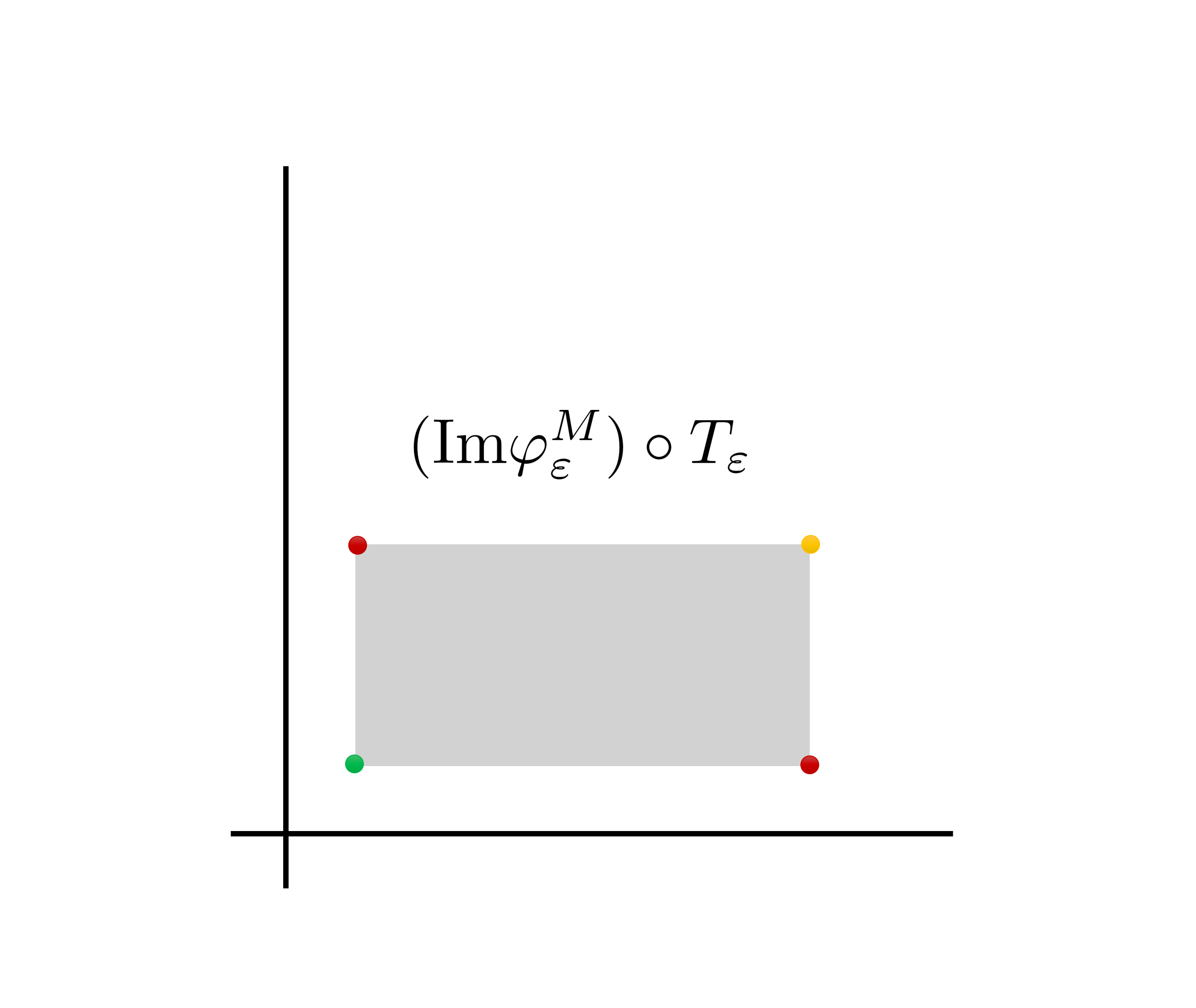} 
        \caption{The $\varepsilon$-simplified module $\textsf{S}_\varepsilon(M) = (\Ima \varphi_\varepsilon^M)\circ T_{\varepsilon}$.}
    \end{subfigure}
    \caption{A 2-parameter interval decomposable module $M$ under the $\varepsilon$-simplification functor $\textsf{S}_\varepsilon$ (\Cref{defn:SimplificationFunctor}).}
    \label{fig:Simplification}
\end{figure}

\begin{defn}[Simplification Functor]
\label{defn:SimplificationFunctor}
If $M \in \fpmultimodules$ define the $\varepsilon$-simplification of $M$, denoted \textcolor{Maroon}{$\textsf{S}_\varepsilon(M)$}, to be the shifted image under the internal translation $\varphi_\varepsilon^M$ of $M$. 
$$\textsf{S}_\varepsilon(M) = (\Ima \varphi_\varepsilon^M)\circ T_{\varepsilon}$$
\end{defn}

\begin{prop}[$d_I(M,\textsf{S}_{\varepsilon}(M) )\leq \varepsilon$]

$\textsf{S}_{\varepsilon}$ is an endofunctor of $\multimodules$ and perturbs modules by no more than $\varepsilon$ in the interleaving distance, that is $d_I(M,\textsf{S}_{\varepsilon}(M) )\leq \varepsilon$.
\end{prop}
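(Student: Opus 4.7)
The plan is to first unpack $\textsf{S}_\varepsilon(M)$ pointwise, then handle functoriality, and finally construct an explicit $\varepsilon$-interleaving between $M$ and $\textsf{S}_\varepsilon(M)$.

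Pointwise description: Since $\varphi_\varepsilon^M$ is a morphism of persistence modules, its image $\Ima \varphi_\varepsilon^M$ is a submodule of $M$, with $(\Ima \varphi_\varepsilon^M)_\vec{a} = \Ima(M_{\vec{a}-\varepsilon\vec{1}} \xrightarrow{\vec{x}^{\varepsilon\vec{1}}} M_\vec{a})$. Precomposition with the translation $T_\varepsilon$ yields a persistence module, so $\textsf{S}_\varepsilon(M)_\vec{a} = \Ima(M_\vec{a} \xrightarrow{\vec{x}^{\varepsilon\vec{1}}} M_{\vec{a}+\varepsilon\vec{1}})$, viewed as a subspace of $M_{\vec{a}+\varepsilon\vec{1}}$.

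Functoriality: Given a morphism $f: M\to N$, the module-axiom identity $f \circ \varphi_\varepsilon^M = \varphi_\varepsilon^N \circ f$ implies that $f$ restricts to a morphism $\Ima \varphi_\varepsilon^M \to \Ima \varphi_\varepsilon^N$. Precomposing this restriction with $T_\varepsilon$ defines $\textsf{S}_\varepsilon(f)$. Preservation of identities and compositions is immediate because precomposition with $T_\varepsilon$ and restriction to an invariant submodule are both functorial operations.

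Interleaving: I would write explicit natural transformations. Let $\alpha: M \Rightarrow \textsf{S}_\varepsilon(M)T_\varepsilon$ be defined at grade $\vec{a}$ by $\alpha_\vec{a}(m) = \vec{x}^{2\varepsilon\vec{1}}\cdot m$. This lands in $\textsf{S}_\varepsilon(M)_{\vec{a}+\varepsilon\vec{1}} = \Ima(M_{\vec{a}+\varepsilon\vec{1}} \to M_{\vec{a}+2\varepsilon\vec{1}})$ since $\vec{x}^{2\varepsilon\vec{1}}\cdot m = \varphi_\varepsilon^M(\vec{x}^{\varepsilon\vec{1}}\cdot m)$. Let $\beta: \textsf{S}_\varepsilon(M) \Rightarrow MT_\varepsilon$ be the inclusion $\beta_\vec{a}: (\Ima \varphi_\varepsilon^M)_{\vec{a}+\varepsilon\vec{1}} \hookrightarrow M_{\vec{a}+\varepsilon\vec{1}}$. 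Naturality of both is inherited from the module structure on $M$.

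To conclude, verify the coherence conditions: $(\beta T_\varepsilon \circ \alpha)_\vec{a}(m)$ is just $\vec{x}^{2\varepsilon\vec{1}}\cdot m \in M_{\vec{a}+2\varepsilon\vec{1}}$, i.e.\ $\varphi_{2\varepsilon}^M$; and $(\alpha T_\varepsilon \circ \beta)_\vec{a}$ sends $n \in \textsf{S}_\varepsilon(M)_\vec{a}$ to $\vec{x}^{2\varepsilon\vec{1}}\cdot n$, which coincides with $\varphi_{2\varepsilon}^{\textsf{S}_\varepsilon(M)}(n)$ because $\textsf{S}_\varepsilon(M)$ inherits its module action from the ambient $M$. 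There is no real technical obstacle; the main bookkeeping care is to keep track of the translation shift built into the definition $\textsf{S}_\varepsilon(M) = (\Ima \varphi_\varepsilon^M)\circ T_\varepsilon$ when identifying the domains and codomains of $\alpha$ and $\beta$.
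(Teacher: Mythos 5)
Your proof is correct and takes essentially the same approach as the paper: the paper also sets $f = \varphi_{2\varepsilon}^M$ and $g$ to be the inclusion $\Ima \varphi_\varepsilon^M \hookrightarrow M\circ T_\varepsilon$, then checks the same two coherence identities. You simply spell out the pointwise description and the functoriality argument in a bit more detail than the paper does.
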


\begin{proof}
The translation functor $T_\varepsilon^\ast$ is clearly a functor and the internal translation is the action of the element $\vec{x}^{\varepsilon\vec{1}} \in P_n$ on $P_n$-\textbf{Mod}.

We can explicitly define the interleaving morphisms realising $d_I(M,\textsf{S}_{\varepsilon}(M) )\leq \varepsilon$:
$f: M \Rightarrow \textsf{S}_{\varepsilon}(M)\circ T_\varepsilon$, $g: \textsf{S}_{\varepsilon}(M) \Rightarrow M\circ T_\varepsilon$.
Taking $f = \varphi_{2\varepsilon}^M$ and $g = \Ima \varphi_\varepsilon^M \hookrightarrow M\circ T_\varepsilon$ we see that $T^\ast_\varepsilon(g)\circ f =  \varphi_{2\varepsilon}^M$ and $T^\ast_\varepsilon(f)\circ g = \varphi_{2\varepsilon}^M \circ T_\varepsilon = \varphi_{2\varepsilon}^{\textsf{S}_{\varepsilon}(M)}$

\end{proof}

We explicit the effect of the simplification functor on a multiparameter presentation and on the barcode of a single parameter module.

\begin{lem}[Presentation Change under Internal Translation]
Suppose $F_\bullet$ is a free resolution of $M\in \fpmultimodules$ and let $F_1$ and $F_0$ have generating sets $\{\vec{r}_i\}$ and $\{\vec{b}_j\}$ respectively. There is a corresponding resolution $F_\bullet'$ of $\Ima \varphi_\varepsilon^M$ with a bijective correspondence of generating elements $\vec{b} \in \xi_0(F_0) \longleftrightarrow \vec{b}' \in \xi_0(F_0')$, $\vec{r} \in \xi_0(F_1) \longleftrightarrow \vec{r}' \in \xi_0(F_1') $  with the following grading shifts:
$$
 \text{gr}(\vec{b}') = \text{gr}(\vec{b}) + \vec{\varepsilon}
$$

$$
 \text{gr}(\vec{r}') = \text{gr}(\vec{r}) \vee \bigvee_{i : \lambda_i \neq 0}\text{gr}(\vec{b}_i')  \text{ where } p_1(\vec{r}) = \sum_i \lambda_i \vec{x}^{\vec{r}-\vec{b}_i} \cdot \vec{b}_i 
$$
together with the obvious inherited morphisms:
$$
p_0'(\vec{b}') = \vec{x}^{\vec{\varepsilon}} \cdot p_0(\vec{b})
$$
$$
p_1'(\vec{r}') = \sum_i \lambda_i \vec{x}^{\vec{r}'-\vec{b}'_i} \cdot \vec{b}'_i
$$
\label{prop:SmoothedResolution}
\end{lem}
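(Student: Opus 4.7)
The plan is to explicitly construct $F_\bullet'$ by identifying $F_0' = \bigoplus_j P_n(-\vec{b}_j-\vec{\varepsilon})$ with the submodule $\Ima\varphi_\varepsilon^{F_0}\subset F_0$ via the canonical isomorphism $\vec{b}_j' \mapsto \vec{x}^{\vec{\varepsilon}}\cdot\vec{b}_j$. Under this identification, $p_0'$ is just the restriction of $p_0$, and since $M$ is generated by $\{p_0(\vec{b}_j)\}$, the submodule $\Ima\varphi_\varepsilon^M\subset M$ is generated by $\{\vec{x}^{\vec{\varepsilon}}\cdot p_0(\vec{b}_j)\}$ at grades $\vec{b}_j+\vec{\varepsilon}$. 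Hence $p_0' : F_0' \twoheadrightarrow \Ima\varphi_\varepsilon^M$ is surjective with the stated grading shift on generators, and $p_0'(\vec{b}') = \vec{x}^{\vec{\varepsilon}}\cdot p_0(\vec{b})$ as claimed.

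The key step is identifying generators of $\ker p_0'$. Under the identification above, $\ker p_0'$ corresponds to $\ker p_0 \cap \Ima\varphi_\varepsilon^{F_0}$, i.e.\ elements of $\ker p_0$ whose every $\vec{b}_i$-coefficient is divisible by $\vec{x}^{\vec{\varepsilon}}$. Starting from a relation $p_1(\vec{r}) = \sum_i \lambda_i \vec{x}^{\vec{r}-\vec{b}_i}\cdot\vec{b}_i$, the minimal shift ensuring that each coefficient becomes divisible by $\vec{x}^{\vec{\varepsilon}}$ is precisely $\vec{r}' = \vec{r}\vee\bigvee_{i:\lambda_i\neq 0}\vec{b}_i'$, since this forces $\vec{r}'-\vec{b}_i \geq \vec{\varepsilon}$ for each contributing $i$. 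The shifted element $\vec{x}^{\vec{r}'-\vec{r}}\cdot p_1(\vec{r})$ then lies in $\Ima\varphi_\varepsilon^{F_0}$ and translates back via the iso to the proposed $p_1'(\vec{r}') = \sum_i \lambda_i \vec{x}^{\vec{r}'-\vec{b}_i'}\cdot\vec{b}_i'$. The composition $p_0'\circ p_1' = 0$ follows immediately from $p_0\circ p_1 = 0$ after multiplying by $\vec{x}^{\vec{r}'-\vec{r}}$.

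The main obstacle is verifying the reverse inclusion $\ker p_0' \subseteq \Ima p_1'$. Given $y \in \ker p_0'$ at grade $\vec{s}$, exactness of $F_\bullet$ decomposes the transported element $\tilde y$ as $\sum_k \mu_k \vec{x}^{\vec{s}-\vec{r}_k}\cdot p_1(\vec{r}_k)$; we must argue that for each contributing $k$ the grade $\vec{s}$ dominates $\vec{r}_k'$, equivalently $\vec{s}\geq\vec{b}_j+\vec{\varepsilon}$ for every $\vec{b}_j$ in the support of each invoked $p_1(\vec{r}_k)$. Since $\tilde y \in \Ima\varphi_\varepsilon^{F_0}$ only asserts this as an \emph{aggregate} condition on the coefficients of $\tilde y$, individual summands may a priori involve generators $\vec{b}_j$ with $\vec{s}\not\geq\vec{b}_j+\vec{\varepsilon}$ whose contributions cancel in the total sum. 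This is handled cleanly by appealing to minimality of $F_\bullet$: by \Cref{lem:xi0xi1detgrid}, each minimal relation satisfies $\vec{r} = \bigvee_{i:\lambda_i\neq 0}\vec{b}_i$, so the shift collapses to $\vec{r}' = \vec{r}+\vec{\varepsilon}$ and each contributing $\vec{b}_i$ is automatically available whenever $\vec{r}_k'$ is; alternatively one can pass to a minimal resolution via the Cancellation Lemma (\Cref{lem:Cancellation}) before invoking the construction.

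The higher levels $F_i'$ for $i\geq 2$ then proceed by exactly the same recipe: for a generator $\vec{s}$ of $F_i$ whose boundary $p_i(\vec{s})$ has support $\{\vec{s}_j^{(i-1)}\}$ in $F_{i-1}$, the corresponding generator $\vec{s}'\in F_i'$ sits at grade $\text{gr}(\vec{s})\vee\bigvee_j\text{gr}(\vec{s}_j'^{(i-1)})$, and the shifted differential is defined by the same monomial substitution. The exactness check at each level reduces, via the same identification $F_i' \cong \Ima\varphi_\varepsilon^{F_i}$ restricted to the image of $p_i$, to exactness of $F_\bullet$ at level $i$.
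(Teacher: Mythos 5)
You correctly isolate the delicate step---verifying $\ker p_0'\subseteq\Ima p_1'$ requires knowing that $\vec{s}\geq\vec{r}_k'$ for every $\vec{r}_k$ appearing in the decomposition $\tilde y=\sum_k\mu_k\vec{x}^{\vec{s}-\vec{r}_k}p_1(\vec{r}_k)$, and cancellation between summands means this does not follow merely from $\tilde y\in\Ima\varphi_\varepsilon^{F_0}$. This is indeed the crux, and the paper's own proof silently writes down $p_1'\bigl(\sum_j\mu_j\vec{x}^{\vec{s}-\vec{r}_j'}\cdot\vec{r}_j'\bigr)$ without checking it is a well-defined element of $(F_1')_\vec{s}$, i.e.\ without checking $\vec{s}\geq\vec{r}_j'$.

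However, your proposed repair does not work. First, the appeal to \Cref{lem:xi0xi1detgrid} is a mis-citation: that lemma is stated for $j\geq 1$, so it relates $\xi_{j+1}(M)$ to joins of elements of $\xi_j(M)$ only for $\xi_2$ and above; it says nothing about $\xi_1$ versus $\xi_0$. Its proof moreover exploits freeness of the target of $p_{j+1}$, which fails precisely at $j=0$ where the target is $M$. Second, the statement you extract from it, that a minimal relation satisfies $\vec{r}=\bigvee_{i:\lambda_i\neq 0}\vec{b}_i$, is false even in one parameter: the interval module $\mathds{1}^{[0,1)}$ has its single birth at grade $0$ and its single (minimal) relation at grade $1\neq 0$. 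Third, even granting that false identity, the gap is not closed: you would obtain $\vec{r}_k'=\vec{r}_k+\vec{\varepsilon}$, but the information at hand is $\vec{s}\geq\vec{r}_k$ together with $\vec{s}\geq\vec{b}_i+\vec{\varepsilon}$ only for $i$ in the support of the \emph{aggregate} $\tilde y$, not for $i$ in the support of each individual $p_1(\vec{r}_k)$; these do not combine to give $\vec{s}\geq\vec{r}_k+\vec{\varepsilon}$. Finally, the fallback suggestion of first passing to a minimal resolution via \Cref{lem:Cancellation} is no help, since the cancellation phenomenon you identified occurs already for minimal resolutions (e.g.\ with three generators $\vec{b}_1,\vec{b}_2,\vec{b}_3$ and two chained relations whose middle terms cancel). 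The difficulty you flagged is genuine and remains open in the proposal.
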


\begin{proof}
Clearly $p_0' : F_0' \to \Ima\varphi_\varepsilon^M$ is surjective. For every $\vec{r}' \in \xi_0(F_1')$ we have that:
$$ p_0' \circ p_1'(\vec{r}') = p_0'(\sum_i \lambda_i \vec{x}^{\vec{r}'-\vec{b}'_i} \cdot \vec{b}'_i) = \sum_i \lambda_i \vec{x}^{\vec{r}'+ \vec{\varepsilon} -\vec{b}'_i} \cdot p_0(\vec{b}_i) = \vec{x}^{\vec{r}' - \vec{r}} \cdot \sum_i \lambda_i \vec{x}^{\vec{r}-\vec{b}_i} \cdot p_0(\vec{b}_i) = \vec{x}^{\vec{r}' - \vec{r}} \cdot p_0 \circ p_1 (\vec{r}) = 0   $$
and so $\Ima p_1' \subset \ker p_0'$. 
Moreover suppose that $\sum_i \lambda_i \vec{x}^{\vec{s}-\vec{b}_i'} \cdot \vec{b}_i' \in \ker p_0'$ then $\sum_i \lambda_i \vec{x}^{\vec{s}+\vec{\varepsilon}-\vec{b}_i} \cdot \vec{b}_i \in \ker p_0 = \Ima p_1$, hence there exists $\vec{r}_j\in \xi_0(F_1)$ and scalars $\mu_j$ such that $p_1(\sum_j \mu_j \vec{x}^\vec{\vec{s}-\vec{r}_j}\cdot \vec{r}_j) = \sum_i \lambda_i \vec{x}^{\vec{s}+\vec{\varepsilon}-\vec{b}_i} \cdot \vec{b}_i$ and thus $p_1'(\sum_j \mu_j \vec{x}^\vec{\vec{s}-\vec{r}_j'}\cdot \vec{r}_j') = \sum_i \lambda_i \vec{x}^{\vec{s}-\vec{b}_i'}\cdot \vec{b}_i' \in \Ima p_1'$. Hence we have shown exactness of $F_1' \to F_0' \to \Ima \varphi_\varepsilon^M \to 0$.
\end{proof}

\begin{lem}[Presentation Change under Simplification]
Suppose $F_\bullet$ is a free resolution of $M\in \fpmultimodules$ and let $F_1$ and $F_0$ have generating sets $\{\vec{r}_i\}$ and $\{\vec{b}_j\}$ respectively. There is a corresponding resolution $F_\bullet'$ of $\textsf{S}_\varepsilon(M)$ with a bijective correspondence of generating elements  $\vec{b} \in \xi_0(F_0) \longleftrightarrow \vec{b}' \in \xi_0(F_0')$, $\vec{r} \in \xi_0(F_1) \longleftrightarrow \vec{r}' \in \xi_0(F_1') $ with the following grading shifts:
$$ \text{gr}(\vec{b}') = \text{gr}(\vec{b})
$$

$$ \text{gr}(\vec{r}') = \left(\text{gr}(\vec{r}) \vee \bigvee_{i : \lambda_i \neq 0}\text{gr}(\vec{b}_i')\right) - \vec{\varepsilon} \text{ where } p_1(\vec{r}) = \sum_i \lambda_i \vec{x}^{\vec{r}-\vec{b}_i} \cdot \vec{b}_i 
$$
together with the obvious inherited morphisms.
\label{lem:SimplificationGenerators}
\end{lem}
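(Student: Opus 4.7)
The plan is to obtain the asserted presentation of $\textsf{S}_\varepsilon(M)$ by composing two easy steps. First use \Cref{prop:SmoothedResolution} to produce a free resolution of the submodule $\Ima \varphi_\varepsilon^M$, and then translate that resolution by the endofunctor $T_\varepsilon^\ast$, since by definition $\textsf{S}_\varepsilon(M) = (\Ima \varphi_\varepsilon^M) \circ T_\varepsilon$.

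In more detail, I would first invoke \Cref{prop:SmoothedResolution} to lift $F_1 \to F_0 \to M$ to a resolution $F_1^\sharp \to F_0^\sharp \to \Ima \varphi_\varepsilon^M$, in which each generator $\vec{b}$ is replaced by a generator $\vec{b}^\sharp$ at grade $\text{gr}(\vec{b})+\vec{\varepsilon}$ and each relation $\vec{r}$ by a relation $\vec{r}^\sharp$ at grade $\text{gr}(\vec{r}) \vee \bigvee_{\lambda_i\neq 0} \text{gr}(\vec{b}_i^\sharp)$, with the scalars $\lambda_i$ determining the boundary carried across unchanged. Next I would apply the translation endofunctor $T_\varepsilon^\ast$ term-wise to this resolution. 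Because $T_\varepsilon$ is a poset automorphism of $\vec{R}^n$, the induced functor $T_\varepsilon^\ast$ is exact and sends the graded free module $P_n(-\vec{a})$ to $P_n(-(\vec{a}-\vec{\varepsilon}))$, preserving the scalar coefficients of every boundary. The resulting complex $F_\bullet'$ is therefore exact and constitutes a free resolution of $\textsf{S}_\varepsilon(M)$.

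It remains to read off the grades. Each generator lands at grade $(\text{gr}(\vec{b})+\vec{\varepsilon}) - \vec{\varepsilon} = \text{gr}(\vec{b})$, establishing the bijection $\vec{b}\leftrightarrow \vec{b}'$ and allowing us to identify the translated generators with the $\vec{b}_i'$ of the statement. Each relation lands at grade $\bigl(\text{gr}(\vec{r}) \vee \bigvee_{\lambda_i\neq 0} \text{gr}(\vec{b}_i^\sharp)\bigr) - \vec{\varepsilon}$, which matches the lemma's expression $\bigl(\text{gr}(\vec{r}) \vee \bigvee_{\lambda_i\neq 0} \text{gr}(\vec{b}_i')\bigr) - \vec{\varepsilon}$ after using $\text{gr}(\vec{b}_i^\sharp) = \text{gr}(\vec{b}_i') + \vec{\varepsilon}$ and observing that $\text{gr}(\vec{r})$ dominates each $\text{gr}(\vec{b}_i)$ coordinate-wise, so the wedge collapses identically on both sides. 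Since the lemma is essentially a bookkeeping corollary of \Cref{prop:SmoothedResolution}, there is no genuine technical obstacle; the only point meriting care is checking that $T_\varepsilon^\ast$ preserves exactness and the order relations $\vec{r}^\sharp \geq \vec{b}_i^\sharp$ needed for the boundary morphism to remain well-defined, both of which follow immediately because $T_\varepsilon$ is an order-preserving bijection.
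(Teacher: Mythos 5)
Your overall strategy is exactly the intended one: compose \Cref{prop:SmoothedResolution} with the translation endofunctor $T_\varepsilon^\ast$, use exactness of the pullback along the poset automorphism $T_\varepsilon$, and read off the grades. That much is sound and there is really nothing more to the lemma.

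However, your final bookkeeping step is wrong, and it is wrong in a way that would erase the whole content of the simplification functor. You claim that because $\text{gr}(\vec{r}) \geq \text{gr}(\vec{b}_i)$ for every $i$ with $\lambda_i\neq 0$, ``the wedge collapses identically on both sides.'' But $\text{gr}(\vec{r}) \geq \text{gr}(\vec{b}_i)$ does \emph{not} imply $\text{gr}(\vec{r}) \geq \text{gr}(\vec{b}_i^\sharp) = \text{gr}(\vec{b}_i)+\vec{\varepsilon}$, so the wedge in your computed grade $\bigl(\text{gr}(\vec{r}) \vee \bigvee_{\lambda_i\neq 0} \text{gr}(\vec{b}_i^\sharp)\bigr) - \vec{\varepsilon}$ does not collapse to $\text{gr}(\vec{r})-\vec{\varepsilon}$. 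Indeed the non-collapse is precisely what makes $\textsf{S}_\varepsilon$ a simplification rather than a mere translation of relations: when some coordinate of $\text{gr}(\vec{r})-\text{gr}(\vec{b}_i)$ is below $\varepsilon$, the wedge pulls the new relation up to the generator's grade, which is what later licenses a cancellation via \Cref{lem:Cancellation}. You can see this immediately in \Cref{lem:BarcodeSimp}, where the relation grade is $b_j \vee (d_j-\varepsilon)$ rather than $d_j-\varepsilon$; for a bar of length $\leq\varepsilon$ the max is $b_j$ and the generator--relation pair cancels. So please drop the collapse argument. What is actually happening is that the symbol $\vec{b}_i'$ inside the wedge of the lemma's displayed formula should be read as the shifted generator of \Cref{prop:SmoothedResolution} (at grade $\text{gr}(\vec{b}_i)+\vec{\varepsilon}$), not as the generator of $F_0'$ declared two lines above at grade $\text{gr}(\vec{b}_i)$; equivalently the formula is $(\text{gr}(\vec{r})-\vec{\varepsilon}) \vee \bigvee_{\lambda_i\neq 0}\text{gr}(\vec{b}_i)$, which is what your composition produces directly. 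With that reading your computed grade is literally the lemma's expression and no further simplification is needed or correct.
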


\begin{lem}[Bar Code Simplification]
Let $M\in \vec{vect}^\vec{R}$ have interval decomposition $\bigoplus_{j\in \mathcal{J}} \mathds{1}^{I_j}$, where $I_j = [b_j,d_j)$. 
Define the $\varepsilon$-simplification of the intervals $I_j$ by:

\begin{align*} 
\textsf{S}_\varepsilon(I_j) =      
    \begin{cases}
            [b_j,\infty) & \text{ if } d_j = \infty  \\
            [b_j,d_j-\varepsilon)  & \text{ if } d_j-b_j>\varepsilon  \\
            \emptyset & \text{ otherwise.}
    \end{cases} 
\end{align*}

The module $\textsf{S}_\varepsilon(M)\cong \bigoplus_{j\in \mathcal{J}} \mathds{1}^{\textsf{S}_\varepsilon(I_j)}$.
\label{lem:BarcodeSimp}
\end{lem}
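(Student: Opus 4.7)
The plan is to exploit the fact that the simplification functor $\textsf{S}_\varepsilon$ is built from operations (taking the image of a natural transformation, precomposing with the translation endofunctor $T_\varepsilon$) that each commute with direct sums in $\vec{Vect}^{\vec{R}}$. Concretely, the internal translation satisfies $\varphi_\varepsilon^{\bigoplus_j M_j} = \bigoplus_j \varphi_\varepsilon^{M_j}$, and images of morphisms and precomposition with $T_\varepsilon$ are computed pointwise in $\vec{vect}$. Since direct sums are exact in $\vec{vect}$, these operations distribute over $\bigoplus_j$. Hence
\[
\textsf{S}_\varepsilon(M) \;=\; \textsf{S}_\varepsilon\Bigl(\bigoplus_{j\in\mathcal{J}} \mathds{1}^{I_j}\Bigr) \;\cong\; \bigoplus_{j\in \mathcal{J}} \textsf{S}_\varepsilon(\mathds{1}^{I_j}),
\]
and it suffices to verify $\textsf{S}_\varepsilon(\mathds{1}^{[b,d)}) \cong \mathds{1}^{\textsf{S}_\varepsilon([b,d))}$ for a single interval module.

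Next, I would compute $\Ima \varphi_\varepsilon^{\mathds{1}^{[b,d)}}$ fibrewise using the explicit formula for the internal morphisms of the interval module: at grade $a$, the image of $\mathds{1}^{[b,d)}(a-\varepsilon \leq a)$ is $\mathbb{F}$ precisely when both $a-\varepsilon \geq b$ and $a < d$, and $0$ otherwise. The transitions are the obvious identities when both source and target are $\mathbb{F}$. This identifies $\Ima \varphi_\varepsilon^{\mathds{1}^{[b,d)}}$ as the interval module supported on $[b+\varepsilon, d)$ (or $[b+\varepsilon, \infty)$ when $d=\infty$, or $0$ when $d-b \leq \varepsilon$). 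Precomposing with $T_\varepsilon$ then shifts the interval down by $\varepsilon$, yielding the three cases in the statement:
\[
\textsf{S}_\varepsilon([b,d)) =
\begin{cases}
[b,\infty) & d = \infty, \\
[b, d-\varepsilon) & d < \infty,\ d - b > \varepsilon, \\
\emptyset & d - b \leq \varepsilon.
\end{cases}
\]

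The only points requiring a moment's care are the correct direction of the $T_\varepsilon$ shift (so that one doesn't accidentally land on $[b-\varepsilon, d-\varepsilon)$ or $[b+\varepsilon, d)$) and the degenerate case $d - b \leq \varepsilon$, in which the image sheaf is identically zero and simplification annihilates the summand. Neither amounts to a real obstacle. Assembling the summand-wise computations via the distributive isomorphism above produces the claimed decomposition $\textsf{S}_\varepsilon(M) \cong \bigoplus_{j \in \mathcal{J}} \mathds{1}^{\textsf{S}_\varepsilon(I_j)}$.
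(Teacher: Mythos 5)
Your proof is correct but follows a genuinely different route from the paper. The paper deduces the result from its presentation-change machinery: it writes down the canonical minimal free resolution of $M$ (with $F_0$ free on $\{b_j\}_j$, $F_1$ free on $\{d_j\neq\infty\}_j$, and $p_1(d_j)=x^{d_j-b_j}\cdot b_j$), applies \Cref{lem:SimplificationGenerators} to obtain the shifted resolution of $\textsf{S}_\varepsilon(M)$ (generators stay at $b_j$, relations move to $b_j\vee(d_j-\varepsilon)$), and reads the barcode off the cokernel. You instead argue that $\textsf{S}_\varepsilon$ commutes with direct sums — because $\varphi_\varepsilon$, taking images, and precomposing with $T_\varepsilon$ are all pointwise operations that distribute over $\bigoplus$ in $\vec{vect}$ — and then compute $\textsf{S}_\varepsilon(\mathds{1}^{[b,d)})$ fibrewise: $\Ima\varphi_\varepsilon^{\mathds{1}^{[b,d)}}=\mathds{1}^{[b+\varepsilon,d)}$, and the $T_\varepsilon$-shift brings this back to $[b,d-\varepsilon)$ (empty when $d-b\leq\varepsilon$, $[b,\infty)$ when $d=\infty$). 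Your approach is more self-contained and does not rely on \Cref{prop:SmoothedResolution} or \Cref{lem:SimplificationGenerators}; the paper's approach is less computation but leans on machinery it has built for other purposes and keeps the argument in the module-presentation idiom used throughout. One minor point worth making explicit in your write-up: the interval decomposition may be countably infinite, so you should note that images and translations distribute over arbitrary (not just finite) direct sums in $\vec{Vect}$ — which they do, since these operations are computed grade-by-grade — so nothing breaks.
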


\begin{proof}
Consider the minimal free resolution of $M$ where $F_0$ is free on the graded set $\{b_j\}_{j\in \mathcal{J}}$, $F_1$ is free on the graded set $\{d_j\neq \infty\}_{j\in \mathcal{J}}$, and $p_1(d_j) = x^{d_j-b_j} \cdot b_j$. By \Cref{prop:SmoothedResolution}, $\textsf{S}_\varepsilon(M)$ has a corresponding resolution $F_\bullet'$. Moreover $F_0'$ is free on the graded set $\{b_j'\}_{j\in \mathcal{J}}$ with $\text{grade}(b_j') = b_j$, $F_1$ is free on the graded set $\{d'_j\}_{j\in \mathcal{J}}$ with $\text{grade}(d_j') = b_j \vee (d_j-\varepsilon)$, and $p_1'(d'_j) = x^{d'_j-b'_j} \cdot b'_j$. As claimed, we have $\text{coker}(p_1') \cong \bigoplus_{j\in \mathcal{J}} \mathds{1}^{\textsf{S}_\varepsilon(I_j)}$.
\end{proof}

The simplification functor naturally restricts to the 1-parameter submodules of a multiparameter persistence modules. For slope $\vec{1}$ lines $\mathcal{L}_\vec{a}$, the $\varepsilon$-simplification functor commutes with the restriction to the line $\mathcal{L}_\vec{a}$:
$$ \left.\textsf{S}_\varepsilon\right|_{\mathcal{L}_\vec{a}}(M^{\mathcal{L}_\vec{a}}) = \textsf{S}_\varepsilon(M)^{\mathcal{L}_\vec{a}}$$ 
Thus using \Cref{lem:BarcodeSimp} we can track how the simplification functor $\textsf{S}_\varepsilon$ applied to a multiparameter module affects bar codes in the fibered bar code of that module.

\section{Local Equivalence}
\label{sec:LocalEquivalence}

In this section we prove the main result of this article \Cref{thm:LocalEquivalence}.

We make use of the merge and simplification functors defined in \Cref{sec:M&SFunctors} and their effect on the presentation and fibered bar code of multiparameter persistence modules. After a series of technical lemmas we establish the following local equivalence result:

\begin{restatable}[Local Equivalence]{thm0}{localequiv}
Suppose $M,N \in \fpmultimodules$ are finitely presented multiparameter persistence modules, and $M$ has controlling constant $c_M= c(\mathcal{G}_M)$. For all $\kappa \in [0, \frac{1}{34})$, if $d_I(M,N)=\varepsilon$ and $ \varepsilon < \frac{c_M}{2(34\kappa  +1)} $ then the matching distance is bounded below $d_0(M,N) > \kapeps$.
\label{thm:LocalEquivalence}
\end{restatable}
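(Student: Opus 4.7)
The plan is to argue by contrapositive. Assume $d_I(M,N)=\varepsilon$ with $\varepsilon<c_M/(2(34\kappa+1))$ and $d_0(M,N)\le\kappa\varepsilon$, and derive a contradiction. The broad strategy, inspired by the Reeb graph argument of \cite{carriere_local_2017}, is to use the merge functor to snap $N$ onto the multiparameter Betti grid of $M$, then exploit the smallness of the fibered bar code distance to force the cleaned-up module to share the Betti grid and complexity of $M$, and finally contradict $d_I(M,N)>0$.

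First I apply a merge functor to $N$ with respect to $\mathcal{G}_M$. Choose $\delta$ of the form $C\kappa\varepsilon$ for a constant $C$ to be calibrated at the end; the hypothesis $\varepsilon<c_M/(2(34\kappa+1))$ ensures $\delta<c_M/2$, which is the admissibility condition for $\textsf{M}_\delta^{\mathcal{G}_M}$. Set $\tilde N:=\textsf{M}_\delta^{\mathcal{G}_M}(N)$. The perturbation bound for merge and $d_0\le d_I$ give
\[
d_I(M,\tilde N)\le\varepsilon+\delta, \qquad d_0(M,\tilde N)\le\kappa\varepsilon+\delta.
\]
Moreover every Betti number of $\tilde N$ lying in the $\delta$-neighbourhood of $\text{Grid}_{\mathcal{G}_M}$ now sits on that grid, so by \Cref{lem:xi0xi1detgrid} I only need to identify $\xi_0(\tilde N)$ and $\xi_1(\tilde N)$ with those of $M$ to deduce that the full Betti grids coincide.

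Next, for each generator $\vec{b}\in\xi_0(M)$ I test the slope-$\vec{1}$ line $\mathcal{L}_\vec{b}$ through $\vec{b}$ (which has weight $1$): the bar in $\mathcal{B}(M^{\mathcal{L}_\vec{b}})$ born at $\vec{b}$ must be matched, under the assumed matching distance bound, to a bar in $\mathcal{B}(\tilde N^{\mathcal{L}_\vec{b}})$ whose birth lies within $\kappa\varepsilon+\delta$ of $\vec{b}$. \Cref{lem:BarcodeSimp}, combined with the restriction identity $\textsf{S}_\eta(M)^{\mathcal{L}}=\textsf{S}_\eta(M^{\mathcal{L}})$, lets me apply an $\eta$-simplification to eliminate bars too short to be forced by the comparison. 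Because the admissible birth locations of $\tilde N$ now lie on $\text{Grid}_{\mathcal{G}_M}$ and any two distinct such grid points are at least $c_M$ apart, the generator of $\tilde N$ matched to $\vec{b}$ must lie exactly at $\vec{b}$. The symmetric argument produces a generator of $M$ for every surviving generator of $\tilde N$, and a parallel analysis along slope-$\vec{1}$ lines through the relation gradings, together with \Cref{lem:Cancellation} to cancel spurious pairs $(\vec{b},\vec{r}_\vec{b})$ of matching grade, matches $\xi_1(M)$ with $\xi_1(\tilde N)$.

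The main obstacle is the bookkeeping that produces the constant $34$: the perturbations from merge ($\delta$), from simplification ($\eta$), and the two-sided nature of interleavings combine into an inequality bounding $\varepsilon$ in terms of $\kappa\varepsilon$, $\delta$, and $\eta$; optimising over $\delta,\eta$ subject to the admissibility constraints $\delta<c_M/2$ and $\eta$ large enough to kill all features of length below $\kappa\varepsilon+\delta$ fixes the coefficient. The most delicate sub-step is ruling out short-lived extra generators of $\tilde N$ which are individually invisible to $d_0$ but collectively prevent an isomorphism: they must be located by testing lines of slope away from $\vec{1}$ whose weight $w(\mathcal{L})$ is controlled in terms of $c_M$. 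Once the minimal presentations of $M$ and $\tilde N$ have been matched generator-by-generator and relation-by-relation, a lift of the interleaving morphisms to the free presentations forces $M\cong\tilde N$, whence
\[
d_I(M,N)\le d_I(M,\tilde N)+d_I(\tilde N,N)\le 0+\delta<\varepsilon,
\]
contradicting $d_I(M,N)=\varepsilon$.
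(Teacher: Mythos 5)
Your overall strategy matches the paper's --- snap $N$ near the Betti grid of $M$ with merge/simplification functors, then use the resulting isomorphism to contradict $d_I(M,N)=\varepsilon$ --- but there are two genuine gaps in the execution.

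First, a single merge $\textsf{M}_\delta^{\mathcal{G}_M}(N)$ followed by one simplification does not establish $\cup_i \xi_i(\tilde{N})\subset \Ima\mathcal{G}_M$, which is the property actually needed. The merge only moves Betti numbers that are already $\delta$-close to the grid; generators of $N$ far from $\Grid$ (but which birth short bars) are untouched by the merge, so you need to kill them \emph{first} by a simplification, and only then merge. The paper's construction is $\tilde{N}=\textsf{M}^{\mathcal{G}_M}_{20\kapeps}\circ\textsf{S}_{10\kapeps}\circ\textsf{M}^{\mathcal{G}_M}_{2\kapeps}\circ\textsf{S}_{2\kapeps}(N)$, alternating simplify and merge twice (the first round handles generators via \Cref{lem:Trichotomy} and \Cref{lem:nontrivgen}, the second handles relations via \Cref{lem:DichotomyRelation} and \Cref{lem:nontrivrel}, with \Cref{lem:GradingMerge} forcing cancellations). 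Your proposal leaves dangling the case of a generator or relation lying $\kapeps$-close to $\Grid$ but not close to any grid point $\Ima\mathcal{G}_M$: the paper pushes along a slope-$\vec{1}$ line through the \emph{unmerge} of that point to show the associated bar is unmatched and hence short, whereas you appeal to ``lines of slope away from $\vec{1}$ whose weight is controlled in terms of $c_M$,'' which is not how the bound is obtained and is not substantiated.

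Second, and more fundamentally, your concluding step ``a lift of the interleaving morphisms to the free presentations forces $M\cong\tilde N$'' is exactly the nontrivial claim that needs proof and is not a consequence of matching $\xi_0$ and $\xi_1$ grade-by-grade. Two modules with identical $\xi_0,\xi_1$ (as graded multisets) need not be isomorphic, because the relation maps $p_1$ may differ. The paper instead invokes \Cref{lem:LesnickLemma} (Lesnick's Lemma~6.7): once $\cup_i\xi_i(\tilde{N})\subset\Ima\mathcal{G}_M$ is secured and the controlling constant gives $c_M>2(\varepsilon+34\kapeps)$, the $(\varepsilon+34\kapeps)$-interleaving together with the fact that transition maps over $2(\varepsilon+34\kapeps)$ at all Betti locations are isomorphisms yields $M\cong\tilde{N}$ directly. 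This is a cleaner and provably sufficient criterion; without invoking something of this strength, your generator-by-generator matching argument leaves a gap. The constant $34$ then falls out of the total perturbation $20\kapeps+10\kapeps+2\kapeps+2\kapeps$, not from an optimisation over free parameters $\delta,\eta$.
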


 Let $ B(M,\frac{c_M}{4})\subset (\fpmultimodules,d_I)$ denote the open ball centred at $M$ of radius $\frac{c_M}{4}$. \Cref{thm:LocalEquivalence} states that for all $N \in B(M,\frac{c_M}{4})$ we have:

$$ \frac{1}{34}d_I(M,N) \leq d_0(M,N) \leq d_I(M,N) $$

In particular, since the interleaving distance is complete on $\fpmultimodules$, this result implies that the fibered bar code of $M$ is distinct from the fibered bar code of any $N \in B(M,\frac{c_M}{4})$, and thus is a locally complete invariant for $M$.

The proof of \Cref{thm:LocalEquivalence} is built using a series of auxiliary lemmas. The holistic idea for the proof of \Cref{thm:LocalEquivalence} follows the same structure as the proof of the local equivalence result in \cite{carriere_local_2017}. We assume two modules have a small matching distance in comparison to one of their controlling constants $d_0(M,N) = \kapeps \ll c_M$ and in comparison to their interleaving distance $d_0(M,N) = \kapeps \ll \varepsilon = d_I(M,N)$. We can use the small matching distance $d_0(M,N) = \kapeps \ll c_M$ to deduce that the multiparameter Betti numbers of $N$ either lie close to the grid $\Ima \mathcal{G}_M$ or can be cancelled with a nearby multiparameter Betti number. Using a series of merges and simplifications we find a nearby module $\Tilde{N}$, ($d_I(N,\Tilde{N}) \leq \text{const}\cdot \kapeps$), such that the multiparameter Betti numbers of $\Tilde{N}$ are contained in the grid $\Ima \mathcal{G}_M$. If a $\delta$-interleaving between $\Tilde{N}$ and $M$ exist for some $\delta < \frac{c_M}{2}$ then \Cref{lem:LesnickLemma} implies that $\Tilde{N}$ and $M$ are isomorphic. We use these results to derive a contradiction to the triangle inequality for the interleaving distance when $\kappa$ is too small.

Let us first recall the push function \Cref{defn:pushfunction}. Note that the fibre of the push map $\text{push}_\mathcal{L}$ for an element $\vec{a}\in \mathcal{L}$ is the boundary of the downset of $\vec{a}$:

$$ \text{push}_\mathcal{L}^{-1}(\vec{a}) = \partial \{\vec{p} \in \mathbb{R}^n : \vec{p} \leq \vec{a}\}$$
Hence for any $\vec{p}\in \mathbb{R}^n$ at least one coordinate of $\vec{p}$ is preserved by the push function i.e. there is some $i\in[n]$ such that $\text{push}_\mathcal{L}(\vec{p})_i= p_i$.
The push function $\text{push}_\mathcal{L}$ collapses $\mathbb{R}^n$ to the line $\mathcal{L}$ whilst preserving the partial order. Thus given a resolution of a multiparameter module $M$ the push function induces a resolution of $M^\mathcal{L}$.
\begin{lem}[Induced 1-Parameter Resolution]

Let $\mathcal{L}: \mathbb{R} \to \mathbb{R}^n$ be a positively sloped line and suppose $M\in \multimodules$ has a free resolution $F_\bullet \to M$. The $1$-parameter submodule $M^\mathcal{L} : = M\circ \mathcal{L} \in \vec{Vect}^\vec{R}$ has a corresponding free resolution $F_\bullet^\mathcal{L}$, where the generators of $F_i$ are in bijection with the generators of $F_i^\mathcal{L}$; in particular a generator $\vec{a}$ of $F_i$ corresponds to a generator $\vec{a}^\mathcal{L}$ of $F_i^\mathcal{L}$ with $\mathbb{R}$-grading given by $\mathcal{L}^{-1}\circ \text{push}_\mathcal{L}(\vec{a})$.
\end{lem}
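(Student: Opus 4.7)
The plan is to reduce the claim to two basic facts about the restriction functor $(-)^\mathcal{L} : \vec{Vect}^{\vec{R}^n} \to \vec{Vect}^{\vec{R}}$ given by precomposition with $\mathcal{L}$: (i) it sends finitely generated free modules to finitely generated free modules with the claimed grading shift, and (ii) it is exact. Given these, applying $(-)^\mathcal{L}$ to the resolution $F_\bullet \to M$ produces an exact sequence $F_\bullet^\mathcal{L} \to M^\mathcal{L}$ of free modules with the prescribed generating sets, which is the desired resolution.

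For (i), it suffices to treat a single generator, i.e., to identify $P_n(-\vec{a}) \circ \mathcal{L}$ as a free $1$-parameter module. As a functor on $\vec{R}^n$, the module $P_n(-\vec{a})$ sends $\vec{p}$ to $\mathbb{F}$ when $\vec{p} \geq \vec{a}$ and to $0$ otherwise, with all comparable internal morphisms between copies of $\mathbb{F}$ being the identity. Restricting to $\mathcal{L}$, the $1$-parameter module $P_n(-\vec{a})^\mathcal{L}$ takes the value $\mathbb{F}$ at $t \in \mathbb{R}$ precisely when $\mathcal{L}(t) \geq \vec{a}$. By the very definition of $\text{push}_\mathcal{L}$ as the minimum element of $\Ima \mathcal{L}$ that dominates $\vec{a}$, the set of such $t$ equals $[\mathcal{L}^{-1}(\text{push}_\mathcal{L}(\vec{a})), \infty)$, so $P_n(-\vec{a})^\mathcal{L} \cong P_1(-\mathcal{L}^{-1}(\text{push}_\mathcal{L}(\vec{a})))$. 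Direct sums are preserved by restriction since they are computed pointwise, so applying this to every generator of $F_i$ yields the claimed free resolution term $F_i^\mathcal{L}$ with the correct grades.

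For (ii), exactness of $(-)^\mathcal{L}$ is immediate: a sequence in $\vec{Vect}^{\vec{R}^n}$ is exact iff it is pointwise exact, and pointwise exactness is preserved under restriction to the subposet $\Ima \mathcal{L}$. Therefore applying $(-)^\mathcal{L}$ to the exact sequence $\ldots \to F_1 \to F_0 \to M \to 0$ yields an exact sequence $\ldots \to F_1^\mathcal{L} \to F_0^\mathcal{L} \to M^\mathcal{L} \to 0$, which by (i) is a free resolution of $M^\mathcal{L}$ with the prescribed bijective correspondence of generators.

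There is no serious obstacle: the only subtlety is identifying the grade of the restricted generator, which is handled by the defining property of the push function and the fact that $\mathcal{L}$ is positively sloped (hence order-preserving and injective). The boundary-preservation remark made just before the lemma—that a slice of the downset of a point on $\mathcal{L}$ is the fiber of $\text{push}_\mathcal{L}$—is exactly what certifies the grade shift on each generator.
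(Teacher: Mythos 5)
Your proposal is correct and follows the same two-step strategy as the paper's proof: show that the pullback functor $\mathcal{L}^\ast$ preserves free modules (with the grading shift determined by $\text{push}_\mathcal{L}$) and that it preserves exactness because exactness is checked pointwise. The paper asserts both facts as "straightforward to check"; you simply supply the explicit verification, notably the identification $P_n(-\vec{a})^\mathcal{L} \cong P_1(-\mathcal{L}^{-1}(\text{push}_\mathcal{L}(\vec{a})))$, which the paper leaves implicit.
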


\begin{proof}
It is straightforward to check that the pullback functor $\mathcal{L}^\ast : \multimodules \to \vec{Vect}^\vec{R}$ preserves free modules, and preserves exactness since exactness of a sequence of persistence modules may be checked pointwise.
\end{proof}

Let $M\in \vec{vect}^\vec{R}$ with free resolution $F_\bullet \to M$. The decomposition theorem of \cite{crawley-boevey_decomposition_2014}, gives rise to an isomorphism $\phi : M \to \bigoplus_{j \in \mathcal{J}}\mathds{1}^{I_j}$ for some indexed set of intervals $\{I_j = [b_j,d_j)\}_{j \in \mathcal{J}}$. For the purposes of this paper we shall require that $|J| = |\xi_0(F_0)|$ and hence allow for $I_j$ to be empty intervals. Let $F'_\bullet$ denote the free resolution of $\bigoplus_{j \in \mathcal{J}}\mathds{1}^{I_j}$, where $F'_0 = \text{Free}[\{b_j\}_{j \in \mathcal{J}}]$, $F'_1 = \text{Free}[\{d_j \neq \infty \}_{j \in \mathcal{J}}]$ and $d_j \mapsto x^{d_j-b_j}\cdot b_j$. We shall refer to $F'_\bullet$ as the \textcolor{Maroon}{canonical free resolution} of $\bigoplus_{j \in \mathcal{J}}\mathds{1}^{I_j}$.

Again let $\langle \cdot , \cdot \rangle$ denote the point-wise vector space inner product of a free module for which makes the generating set an orthonormal basis at each grade. Note that the isomorphism $\phi$ gives rise to a chain map $\phi_\bullet : F_\bullet \to F'_\bullet$.

\begin{defn}[Bars generated by $\vec{b}$ and killed by $\vec{r}$]

 For a fixed isomorphism $\phi$ and resolution $F_\bullet$ we say that (a generator of $F_0$)  \textcolor{Maroon}{${b}$ generates the bar $I_j$}  if $\text{gr}(b) = \text{gr}(b_j)$ and $\langle \phi_0({b}), b_j \rangle \neq 0 $. Moreover we say that (a generator of $F_1$) \textcolor{Maroon}{${r}$  kills the bar $I_j$} if $\text{gr}(r) = \text{gr}(r_j)$ and $\langle \phi_1({r}), {r_j} \rangle \neq 0 $.

For $M\in \fpmultimodules$ with free resolution $F_\bullet \to M$ recall there is a corresponding free resolution $F^\mathcal{L}_\bullet$ of $M^\mathcal{L}$. Given fixed decomposition isomorphisms $\phi_\mathcal{L} : M^\mathcal{L} \to \bigoplus_{j \in \mathcal{J}_\mathcal{L}}\mathds{1}^{I_j}$ for all positively sloped lines $\mathcal{L}$, we say that $\vec{b}$ (a generator of $F_0$) generates the bar $I_j$ in $M^\mathcal{L}$ if $\vec{b}^\mathcal{L}$ generates $I_j$ in $M^\mathcal{L}$. Similarly we say that the relation $\vec{r}$ (a generator of $F_1$) kills the bar $I_j$, if $\vec{r}^\mathcal{L}$ kills the bar $I_j$ in $M^\mathcal{L}$.

\end{defn}

\begin{Remark}
We can fix isomorphisms $\phi_\mathcal{L}$ such that each bar in $M^\mathcal{L}$ is generated by a unique ${b}\in \xi_0(F_0^\mathcal{L})$ and killed by a unique ${r}\in \xi_0(F_1^\mathcal{L})$. Moreover we can ensure every generator ${b}\in \xi_0(F_0^\mathcal{L})$ generates some bar in $M^\mathcal{L}$
(albeit these bars may be of zero length). See for example the reduction algorithm of \cite{edelsbrunner_computational_2010}. We shall fix our decomposition isomorphisms to have this property.
\end{Remark}

Our next lemma identifies the fibre of the push map.

\begin{lem}[Fibre of the Push Map]
Suppose $\vec{a}=(a_1,...,a_n)$, $\vec{b}=(b_1,...,b_n)$ are such that $\|\text{push}_{\mathcal{L}_\vec{b}}(\vec{a})-\vec{b}\|_\infty \leq \delta$ then $\min_{i \in [n]} | a_i - b_i| \leq \delta$. In particular, if $\vec{a} \in \Ima \mathcal{G}$ then $\vec{b}$ is $\delta$-close to $\text{Grid}_\mathcal{G}$.
 \label{lem:CloseToGrid}
\end{lem}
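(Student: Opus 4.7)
The plan is to unfold the definition of the push map for a slope-$\vec{1}$ line and then invoke the observation, made immediately above the lemma, that push always preserves at least one coordinate.

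First, I would make the push map explicit in coordinates. Since $\mathcal{L}_\vec{b}(t) = \vec{b} + t\vec{1}$, the inequality $\vec{b} + t\vec{1} \geq \vec{a}$ is equivalent to $t \geq \max_i(a_i - b_i)$, so setting $t^\ast := \max_i(a_i - b_i)$ gives $\text{push}_{\mathcal{L}_\vec{b}}(\vec{a}) = \vec{b} + t^\ast \vec{1}$ and hence $\|\text{push}_{\mathcal{L}_\vec{b}}(\vec{a}) - \vec{b}\|_\infty = |t^\ast|$. The hypothesis of the lemma thus reads $|t^\ast| \leq \delta$.

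Next, I would pick an index $i^\ast$ achieving $\max_i(a_i - b_i)$; by construction the $i^\ast$-th coordinate of $\text{push}_{\mathcal{L}_\vec{b}}(\vec{a})$ equals $a_{i^\ast}$ (this is the coordinate-preservation remark just above the lemma). Then $|a_{i^\ast} - b_{i^\ast}| = |t^\ast| \leq \delta$, which immediately yields $\min_{i\in[n]} |a_i - b_i| \leq \delta$, proving the first assertion. For the "in particular" clause, $\vec{a} \in \Ima \mathcal{G}$ means $a_i \in \Ima \mathcal{G}^i$ for every $i$; in particular $a_{i^\ast} \in \Ima \mathcal{G}^{i^\ast}$. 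The axis-aligned hyperplane $H := \{\vec{x} \in \mathbb{R}^n : x_{i^\ast} = a_{i^\ast}\}$ is therefore contained in $\text{Grid}_\mathcal{G}$, and $\|\vec{b} - H\|_\infty = |b_{i^\ast} - a_{i^\ast}| \leq \delta$, giving $\|\vec{b} - \text{Grid}_\mathcal{G}\|_\infty \leq \delta$ as claimed.

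I do not expect any substantive obstacle; once $\text{push}_{\mathcal{L}_\vec{b}}$ is written down coordinatewise the lemma is essentially immediate. The only subtlety is that $t^\ast$ may be negative (when $\vec{b}$ strictly dominates $\vec{a}$), but this does not disturb the computation $\|\text{push}_{\mathcal{L}_\vec{b}}(\vec{a}) - \vec{b}\|_\infty = |t^\ast|$ nor the argument identifying the coordinate $i^\ast$.
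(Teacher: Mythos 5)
Your proof is correct and takes essentially the same approach as the paper's one-line argument, which simply invokes the coordinate-preservation observation stated immediately above the lemma. You have unpacked that observation explicitly via the formula $\text{push}_{\mathcal{L}_\vec{b}}(\vec{a}) = \vec{b} + \max_i(a_i-b_i)\vec{1}$, which is exactly the right calculation, and your remark that the sign of $t^\ast$ is irrelevant is accurate.
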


\begin{proof}
At least one of the coordinates $a_i$ is preserved by the push map.
\end{proof}

Suppose modules $M$ and $N$ satisfy the assumptions of \Cref{thm:LocalEquivalence}. \Cref{lem:CloseToGrid} will be used to identify that the multiparameter Betti numbers inducing births of long bars in the fibered bar code of $N$ must lie close to the grid of $M$.

In \Cref{lem:Trichotomy} we use the positions of births of bars in the fibered bar code of $M$ to limit the positions of generators of $N$. \Cref{lem:nontrivgen} establishes that every non trivial generator of $N$ generates some non-empty bar in the fibered bar code of $N$.

\begin{lem}[Non-trivial Generators Generate Non-zero Bars]
\label{lem:nontrivgen}
Let $M \in \fpmultimodules$ be a finitely presented multiparameter persistence module. If $\vec{b} \in \xi_0(M)$ then $\vec{b}$ generates a non-trivial bar in $M^{\mathcal{L}_\vec{b}}$.
\end{lem}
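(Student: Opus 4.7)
The plan is to show that the $1$-parameter Betti number $\xi_0(M^{\mathcal{L}_\vec{b}})(0)$ is at least $1$ and that $\vec{b}^{\mathcal{L}_\vec{b}}$ contributes non-trivially to the bar code of $M^{\mathcal{L}_\vec{b}}$ at grade $0$; from this the existence of a bar $[0,d)$ with $d>0$ generated by $\vec{b}$ will follow.

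First I would unpack the hypothesis $\vec{b} \in \xi_0(M)$ via the standard identification $\xi_0(M)(\vec{b}) = \dim (M/IM)_\vec{b}$: the class $[\vec{b}] \in M(\vec{b})$ represented by the multiparameter generator does not lie in the subspace $(IM)(\vec{b}) = \sum_{\vec{a} > \vec{0}} \vec{x}^\vec{a}\cdot M(\vec{b}-\vec{a})$. Next, using the conventions $\mathcal{L}_\vec{b}(0)=\vec{b}$ and slope $\vec{1}$ from the Notation section, I would compute $\xi_0(M^{\mathcal{L}_\vec{b}})(0) = \dim M(\vec{b})/\sum_{a>0} \vec{x}^{a\vec{1}}\cdot M(\vec{b}-a\vec{1})$. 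Since each $a\vec{1}$ with $a>0$ is itself a positive vector in $\mathbb{R}^n$, the denominator in this quotient is contained in $(IM)(\vec{b})$, and so $[\vec{b}]$ survives as a non-zero class. Hence $\xi_0(M^{\mathcal{L}_\vec{b}})(0) \geq 1$, and by the decomposition theorem for $1$-parameter persistence modules the bar code $\mathcal{B}(M^{\mathcal{L}_\vec{b}})$ contains at least one bar $[0,d)$ with $d>0$.

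The final step is to exhibit a choice of decomposition isomorphism $\phi_{\mathcal{L}_\vec{b}}$, consistent with the conventions fixed in the Remark on decomposition isomorphisms, under which $\vec{b}^{\mathcal{L}_\vec{b}}$ generates this non-trivial bar. Since the image $\phi_0(\vec{b}^{\mathcal{L}_\vec{b}})$ must have a non-vanishing component along the generator of some bar starting at $0$ (otherwise $[\vec{b}]$ would reduce into the denominator computed above), a suitable basis change of $F_0^{\mathcal{L}_\vec{b}}$ at grade $0$ lets us pair $\vec{b}^{\mathcal{L}_\vec{b}}$ with such a bar. The main obstacle I anticipate is not the existence of an appropriate basis change but the bookkeeping to ensure the chosen isomorphism remains compatible with the global uniqueness requirement that every generator in $\xi_0(F_0^{\mathcal{L}_\vec{b}})$ be matched to a distinct bar; this is essentially a Gauss--Jordan reduction on the finite-dimensional vector space $F_0^{\mathcal{L}_\vec{b}}(0)$ and should not introduce any deeper difficulty.
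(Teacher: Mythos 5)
Your approach is genuinely different from the paper's. The paper argues by contradiction: if $\vec{b}$ generated the empty bar, exactness of the pushed-forward resolution would produce a relation $\vec{r}\in\xi_1(M)$ with $\text{gr}(\vec{r})=\text{gr}(\vec{b})$ and $\langle p_1(\vec{r}),\vec{b}\rangle\neq 0$, contradicting minimality via the Cancellation Lemma (\Cref{lem:Cancellation}). You instead compare the $\text{Tor}_0$ quotients defining the two Betti numbers and use the containment
\[
(IM^{\mathcal{L}_\vec{b}})(0)=\sum_{a>0}\vec{x}^{a\vec{1}}\cdot M(\vec{b}-a\vec{1})\ \subset\ (IM)(\vec{b})=\sum_{\vec{a}>\vec{0}}\vec{x}^{\vec{a}}\cdot M(\vec{b}-\vec{a}),
\]
so the natural surjection $M(\vec{b})/(IM^{\mathcal{L}_\vec{b}})(0)\twoheadrightarrow M(\vec{b})/(IM)(\vec{b})$ carries $[p_0(\vec{b})]$ to something non-zero, hence $[p_0(\vec{b})]$ is non-zero upstairs. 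That is a clean and correct lower bound on $\xi_0(M^{\mathcal{L}_\vec{b}})(0)$ and sidesteps the Cancellation Lemma entirely; the paper's route has the structural advantage of reusing \Cref{lem:Cancellation} in exactly the same way for the companion result \Cref{lem:nontrivrel}.

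Where you go wrong is the final paragraph, and the error is that you manufacture an obstacle that is not there. You do not need to \emph{choose} a decomposition isomorphism nor perform any basis change. Your own parenthetical remark already finishes the proof: since $\phi$ is an isomorphism it identifies $(IM^{\mathcal{L}_\vec{b}})(0)$ with $\bigoplus_j(I\mathds{1}^{I_j})(0)$, and passing $\phi\bigl(p_0(\vec{b})\bigr)=p_0'\bigl(\phi_0(\vec{b}^{\mathcal{L}_\vec{b}})\bigr)$ to the quotient kills every summand with $b_j<0$ and every empty $I_j$, leaving exactly the coefficients $c_j=\langle\phi_0(\vec{b}^{\mathcal{L}_\vec{b}}),b_j\rangle$ for those $j$ with $b_j=0$ and $I_j$ non-trivial. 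Non-vanishing of the class forces some such $c_j\neq 0$, which is precisely the paper's definition of ``$\vec{b}$ generates the non-trivial bar $I_j$'' — for the \emph{fixed} $\phi$ of the Remark, with no re-choosing. The ``global uniqueness'' bookkeeping you worry about is irrelevant: the definition of generating a bar only requires a single non-zero pairing, and the Remark's conventions (each column of the grade-$0$ block of $\phi_0$ has exactly one non-zero entry, each row at least one, hence exactly one by a counting argument) are compatible with, not in tension with, your conclusion. So the correct version of your proof is shorter than what you wrote: delete the appeal to Gauss--Jordan and the talk of exhibiting a choice of $\phi$, and simply observe that the surviving class in $\bigoplus_j\mathds{1}^{I_j}(0)/(I\mathds{1}^{I_j})(0)$ has a non-zero component only on indices $j$ with $b_j=0$ and $I_j$ non-empty.
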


\begin{proof}
Suppose that $M$ has minimal free resolution $(F_\bullet,p_\bullet)$, decomposition isomorphism $\phi : M^{\mathcal{L}_\vec{b}} \Tilde{\to} \bigoplus_{j \in \mathcal{J}_\mathcal{L}}\mathds{1}^{I_j}$, and $\bigoplus_{j \in \mathcal{J}_\mathcal{L}}\mathds{1}^{I_j}$ has canonical free resolution $(F_\bullet',p_\bullet')$. Suppose that $\phi_0(\vec{b}^{\mathcal{L}_\vec{b}}) = b_j$ and $I_j = \emptyset$. By the exactness of the resolution $(F_\bullet^{\mathcal{L}_\vec{b}}, p_\bullet^{\mathcal{L}_\vec{b}})$ there is some $\vec{r}\in \xi_1(M)$ such that $\phi_1(\vec{r}^{\mathcal{L}_\vec{b}}) = r_j$ and $\text{gr}(\vec{b}^{\mathcal{L}_\vec{b}}) = \text{gr}(\vec{r}^{\mathcal{L}_\vec{b}})$. The fact that $\phi_1(\vec{r}^{\mathcal{L}_\vec{b}}) = r_j$ implies that $\langle p_1(\vec{r}), \vec{b} \rangle \neq 0$. Moreover $\langle p_1(\vec{r}), \vec{b} \rangle \neq 0$ implies that $\text{gr}(\vec{r})\geq \text{gr}(\vec{b})$. Together $\text{gr}(\vec{r})\geq \text{gr}(\vec{b})$ and $\text{gr}(\vec{b}^{\mathcal{L}_\vec{b}}) = \text{gr}(\vec{r}^{\mathcal{L}_\vec{b}})$ imply that $\text{gr}(\vec{r}) = \text{gr}(\vec{b})$. Thus $\vec{b}$ and $\vec{r}$ satisfy the hypotheses of \Cref{lem:Cancellation} and are a cancellation pair, contradicting the minimality of $(F_\bullet,p_\bullet)$.


\end{proof}

Similarly, in \Cref{lem:DichotomyRelation} we use the positions of deaths of bars in the fibered bar code of $M$ to determine the positions of relations of $N$. \Cref{lem:nontrivrel} establishes that every non trivial relation of $N$ kill some non-empty bar in the fibered bar code of $N$.

\begin{lem}[Non-trivial Relations Kill]
\label{lem:nontrivrel}
Let $M \in \fpmultimodules$ be a finitely presented multiparameter persistence module. If $\vec{r} \in \xi_1(M)$ then $\vec{r}^{\mathcal{L}_\vec{r}} \notin \ker p_1^{\mathcal{L}_\vec{r}}$ 
and so $\vec{r}$ kills a bar in $M^{\mathcal{L}_\vec{r}}\cong_\phi \bigoplus_{j \in \mathcal{J}_\mathcal{L}}\mathds{1}^{I_j}$. 
\end{lem}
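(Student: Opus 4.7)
The plan is to prove the two assertions separately.

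I would first establish $\vec{r}^{\mathcal{L}_\vec{r}} \notin \ker p_1^{\mathcal{L}_\vec{r}}$. Minimality of $F_\bullet$ gives $p_1(\vec{r}) \neq 0$, so write $p_1(\vec{r}) = \sum_i \lambda_i \vec{x}^{\vec{r}-\vec{b}_i}\vec{b}_i$ with at least one $\lambda_i \neq 0$. Restricting to $\mathcal{L}_\vec{r}$ yields $p_1^{\mathcal{L}_\vec{r}}(\vec{r}^{\mathcal{L}_\vec{r}}) = \sum_i \lambda_i x^{-t_i} \vec{b}_i^{\mathcal{L}_\vec{r}}$ in $F_0^{\mathcal{L}_\vec{r}}$ at grade $0$, where $t_i := \mathcal{L}_\vec{r}^{-1}(\text{push}_{\mathcal{L}_\vec{r}}(\vec{b}_i)) \leq 0$. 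The $\vec{b}_i^{\mathcal{L}_\vec{r}}$ are distinct free-module generators, and their shifts to grade $0$ remain linearly independent, so the presence of a nonzero $\lambda_i$ prevents the sum from vanishing.

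For the second assertion---that $\vec{r}$ kills a bar---I would split on whether some $\vec{b}_i$ with $\lambda_i \neq 0$ shares a coordinate with $\vec{r}$, i.e., whether $t_i = 0$ for some such $i$. If so (Case A), then $\vec{b}_i^{\mathcal{L}_\vec{r}}$ and $\vec{r}^{\mathcal{L}_\vec{r}}$ both live at grade $0$ and $\langle p_1^{\mathcal{L}_\vec{r}}(\vec{r}^{\mathcal{L}_\vec{r}}), \vec{b}_i^{\mathcal{L}_\vec{r}}\rangle = \lambda_i \neq 0$, so they constitute a cancellation pair in the restricted resolution. The Remark preceding the lemma permits including in $\mathcal{J}_\mathcal{L}$ an empty bar $I_j = \emptyset$ ($b_j = d_j = 0$) at this grade, and one may fix $\phi_{\mathcal{L}_\vec{r}}$ so that $\vec{b}_i^{\mathcal{L}_\vec{r}}$ generates and $\vec{r}^{\mathcal{L}_\vec{r}}$ kills this empty bar.

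In the remaining case (Case B), every term of $p_1^{\mathcal{L}_\vec{r}}(\vec{r}^{\mathcal{L}_\vec{r}})$ carries a shift $x^{-t_i}$ with $-t_i > 0$, so $p_1^{\mathcal{L}_\vec{r}}(\vec{r}^{\mathcal{L}_\vec{r}}) \in I \cdot F_0^{\mathcal{L}_\vec{r}}$ at grade $0$ (with $I$ the augmentation ideal of the $1$-parameter ring). Consequently $[\vec{r}^{\mathcal{L}_\vec{r}}]$ is a cycle in $F_1^{\mathcal{L}_\vec{r}}/(I\cdot F_1^{\mathcal{L}_\vec{r}})$ at grade $0$. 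I would then verify $[\vec{r}^{\mathcal{L}_\vec{r}}] \notin \Ima \bar p_2^{\mathcal{L}_\vec{r}}|_0$ by contradiction: if some $\vec{s} \in \xi_2(M)$ has $\vec{s}^{\mathcal{L}_\vec{r}}$ at grade $0$ and $\langle p_2(\vec{s}), \vec{r}\rangle \neq 0$, then $\vec{s}^{\mathcal{L}_\vec{r}}$ at grade $0$ forces $\vec{s} \leq \vec{r}$, and $\vec{r}$ appearing in $p_2(\vec{s})$ forces $\vec{r} \leq \vec{s}$, so $\vec{s} = \vec{r}$; the pair $(\vec{r}, \vec{s})$ then satisfies the hypotheses of \Cref{lem:Cancellation}, contradicting the minimality of $F_\bullet$. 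Hence $[\vec{r}^{\mathcal{L}_\vec{r}}]$ represents a non-trivial class in $\xi_1(M^{\mathcal{L}_\vec{r}})(0)$, giving rise to a non-empty bar of $M^{\mathcal{L}_\vec{r}}$ with death at $0$; choosing $\phi_{\mathcal{L}_\vec{r}}$ accordingly makes $\vec{r}^{\mathcal{L}_\vec{r}}$ kill this bar.

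The main obstacle is Case B: one must exclude $[\vec{r}^{\mathcal{L}_\vec{r}}]$ being a boundary coming from $F_2^{\mathcal{L}_\vec{r}}$. The geometry of the slope-$\vec{1}$ line through $\vec{r}$ is what makes this possible---any such boundary must come from an $\vec{s}$ sitting precisely at the multiparameter grade $\vec{r}$, which lifts to an honest cancellation pair and contradicts the minimality of $F_\bullet$.
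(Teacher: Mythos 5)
Your proposal is correct in substance and rests on the same engine as the paper's proof---the Cancellation Lemma (\Cref{lem:Cancellation}) together with the observation that if $\text{gr}(\vec{s}^{\mathcal{L}_\vec{r}}) = 0$ then $\text{push}_{\mathcal{L}_\vec{r}}(\vec{s}) = \vec{r}$ forces $\vec{s} \leq \vec{r}$, which with $\langle p_2(\vec{s}),\vec{r}\rangle \neq 0 \Rightarrow \vec{s} \geq \vec{r}$ yields $\vec{s} = \vec{r}$ and hence a cancellation pair in the minimal resolution. The packaging differs. For the first assertion you give a direct argument: minimality gives $p_1(\vec{r}) \neq 0$, and the pullback along a positively sloped line is faithful on free modules, so $p_1^{\mathcal{L}_\vec{r}}(\vec{r}^{\mathcal{L}_\vec{r}}) = \sum_i \lambda_i x^{-t_i}\vec{b}_i^{\mathcal{L}_\vec{r}} \neq 0$. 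This is simpler and more transparent than the paper, which proves the first assertion by contradiction (assuming $\vec{r}^{\mathcal{L}_\vec{r}} \in \ker p_1^{\mathcal{L}_\vec{r}}$ and extracting a same-grade $\vec{s} \in \xi_2$). For the second assertion the paper argues by contrapositive through the chain map $\phi_\bullet$ to the canonical free resolution: if $\vec{r}$ kills no bar then $\phi_1(\vec{r}^{\mathcal{L}_\vec{r}}) = 0$, and since $\phi_0$ is an isomorphism this forces $\vec{r}^{\mathcal{L}_\vec{r}} \in \ker p_1^{\mathcal{L}_\vec{r}}$, reducing to the first part. You instead case-split on whether $p_1^{\mathcal{L}_\vec{r}}(\vec{r}^{\mathcal{L}_\vec{r}})$ survives modulo the augmentation ideal and work explicitly in $F_\bullet^{\mathcal{L}_\vec{r}} \otimes \mathbb{F}$; this makes the connection to the Betti number $\xi_1(M^{\mathcal{L}_\vec{r}})(0)$ visible, which the paper leaves implicit.

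One point to tighten: in Case A you conclude that since $(\vec{b}_i^{\mathcal{L}_\vec{r}}, \vec{r}^{\mathcal{L}_\vec{r}})$ is a cancellation pair in the restricted resolution one may fix $\phi_{\mathcal{L}_\vec{r}}$ so that $\vec{r}^{\mathcal{L}_\vec{r}}$ kills the corresponding empty bar. That does not yet follow: the column of $\vec{r}^{\mathcal{L}_\vec{r}}$ in the boundary matrix of $p_1^{\mathcal{L}_\vec{r}}$ could still reduce to zero against other grade-$0$ relations (redundancy of $\vec{r}^{\mathcal{L}_\vec{r}}$ need not be captured by the mod-$I$ boundary test you run in Case B, since the correction terms may themselves live at grade $0$). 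The gap is easily filled with exactly the argument you use in Case B: if $\vec{r}^{\mathcal{L}_\vec{r}}$ were redundant then $\vec{r}^{\mathcal{L}_\vec{r}} - \eta \in \ker p_1^{\mathcal{L}_\vec{r}}(0) = \Ima p_2^{\mathcal{L}_\vec{r}}(0)$ for some $\eta$ with $\langle \eta, \vec{r}^{\mathcal{L}_\vec{r}}\rangle = 0$, and chasing the $\vec{r}$-coefficient at grade $0$ yields a generator $\vec{s} \in \xi_2(M)$ with $\text{gr}(\vec{s}) = \text{gr}(\vec{r})$ and $\langle p_2(\vec{s}), \vec{r}\rangle \neq 0$, contradicting minimality via \Cref{lem:Cancellation}. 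In fact that single argument applied to ``$\vec{r}^{\mathcal{L}_\vec{r}}$ is redundant'' handles Cases A and B uniformly, and is closer in spirit to how the paper's Paragraph 1 is deployed; the A/B dichotomy is an unnecessary complication.
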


\begin{proof}
Suppose that $M$ has minimal free resolution $(F_\bullet,p_\bullet)$, decomposition isomorphism $\phi : M^{\mathcal{L}_\vec{r}} \Tilde{\to} \bigoplus_{j \in \mathcal{J}_\mathcal{L}}\mathds{1}^{I_j}$, and $\bigoplus_{j \in \mathcal{J}_\mathcal{L}}\mathds{1}^{I_j}$ has canonical free resolution $(F_\bullet',p_\bullet')$.
If $\vec{r}^{\mathcal{L}_\vec{r}} \in \ker p_1^{\mathcal{L}_\vec{r}}$ then by exactness of the resolution $(F_\bullet^{\mathcal{L}_\vec{r}}, p_\bullet^{\mathcal{L}_\vec{r}})$  there is some $\vec{s}\in \xi_2(M)$ such that $\langle p_2^{\mathcal{L}_\vec{r}}(\vec{s}^{\mathcal{L}_\vec{r}}), \vec{r}^{\mathcal{L}_\vec{r}} \rangle \neq 0$ and $\text{gr}(\vec{r}^{\mathcal{L}_\vec{r}}) = \text{gr}(\vec{s}^{\mathcal{L}_\vec{r}})$. The fact that $\langle p_2^{\mathcal{L}_\vec{r}}(\vec{s}^{\mathcal{L}_\vec{r}}), \vec{r}^{\mathcal{L}_\vec{r}} \rangle \neq 0$ implies that $\text{gr}(\vec{s})\geq \text{gr}(\vec{r})$. Together $\text{gr}(\vec{s})\geq \text{gr}(\vec{r})$ and $\text{gr}(\vec{r}^{\mathcal{L}_\vec{r}}) = \text{gr}(\vec{s}^{\mathcal{L}_\vec{r}})$ imply that $\text{gr}(\vec{s}) = \text{gr}(\vec{r})$. Thus $\vec{r}$ and $\vec{s}$ satisfy the hypotheses of \Cref{lem:Cancellation} and are a cancellation pair, contradicting the minimality of $(F_\bullet,p_\bullet)$.
Suppose that $\vec{r}$ does not kill a bar and so $\phi_1(\vec{r}^{\mathcal{L}_\vec{r}}) = 0$. Since $(F_\bullet',p_\bullet')$ is the canonical free resolution $\phi_0$ is an isomorphism, and so $\phi_1(\vec{r}^{\mathcal{L}_\vec{r}}) = 0$ implies that $\vec{r}^{\mathcal{L}_\vec{r}}\in\ker p_1$. Thus it must be that $\vec{r}$ kills a bar in $M^{\mathcal{L}_\vec{r}}\cong_\phi \bigoplus_{j \in \mathcal{J}_\mathcal{L}}\mathds{1}^{I_j}$.

\end{proof}

Through the proof of \Cref{lem:nontrivrel}, we observe that a relation $\vec{r}\in \xi_1(M)$which does not kill a bar must instead be lie in the image of a second order relation $\vec{s}\in \xi_2(M)$.

\begin{defn}[Unmerge Function]

Let $\mathcal{G}= \Pi \mathcal{G}^i$ be a grid with $\delta < \frac{c(\mathcal{G})}{2}$, and let $ \vec{e}_i$ denote the standard basis vector of $\mathbb{R}^n$. The \textcolor{Maroon}{unmerge function} $\Unmerge$ maps points lying on $\text{Grid}_\mathcal{G}$ to lie $\delta$ far away from the grid: $\Unmerge: \text{Grid}_\mathcal{G} \to \mathbb{R}^n$. For $\vec{a} \in \text{Grid}_\mathcal{G}$ let $\mathcal{I}_\vec{a}$ be the indices of the coordinates of $\vec{a}$ such that $|a_i - \Ima \mathcal{G}^i| \leq \delta$. We define the $\delta$-unmerge of $\vec{a}$ as follows:
$$ \Unmerge(\vec{a}) = \Merge(\vec{a}) + \delta \sum_{i \in \mathcal{I}_\vec{a}} \vec{e}_i$$

\end{defn}

By construction the unmerge function takes $\vec{a}$ and returns the maximum element in the poset $\mathbb{R}^n$ which merges to same point as $\vec{a}$ that is: 
$$\Unmerge(\vec{a}) = \max \{\vec{p} \in \mathbb{R}^n : \Merge(\vec{p}) = \Merge(\vec{a})\}$$
Thus for all $\vec{a} \in \text{Grid}_\mathcal{G}$ we have $\|\Unmerge(\vec{a}) - \text{Grid}_\mathcal{G} \|_\infty= \delta$. See \Cref{fig:Unmerge} for examples of the unmerge function.

\begin{figure}
    \centering
    \begin{subfigure}{0.45\textwidth}
        \centering
        \includegraphics[width=1\textwidth]{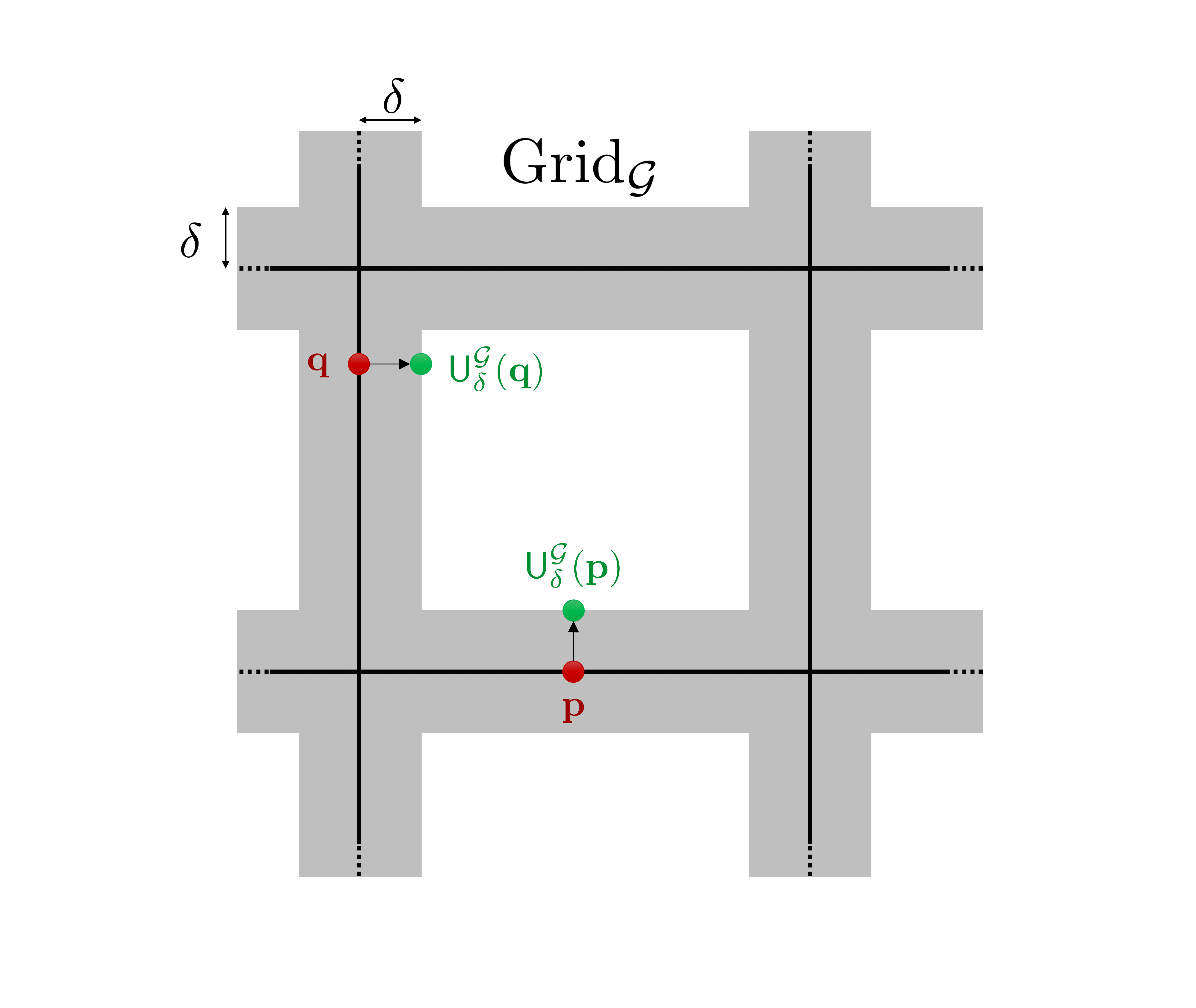} 
        \caption{$\Unmerge$ applied to points $\vec{p},\vec{q}\in \mathbb{R}^2$.}
    \end{subfigure}\hfill
    \begin{subfigure}{0.45\textwidth}
        \centering
        \includegraphics[width=1\textwidth]{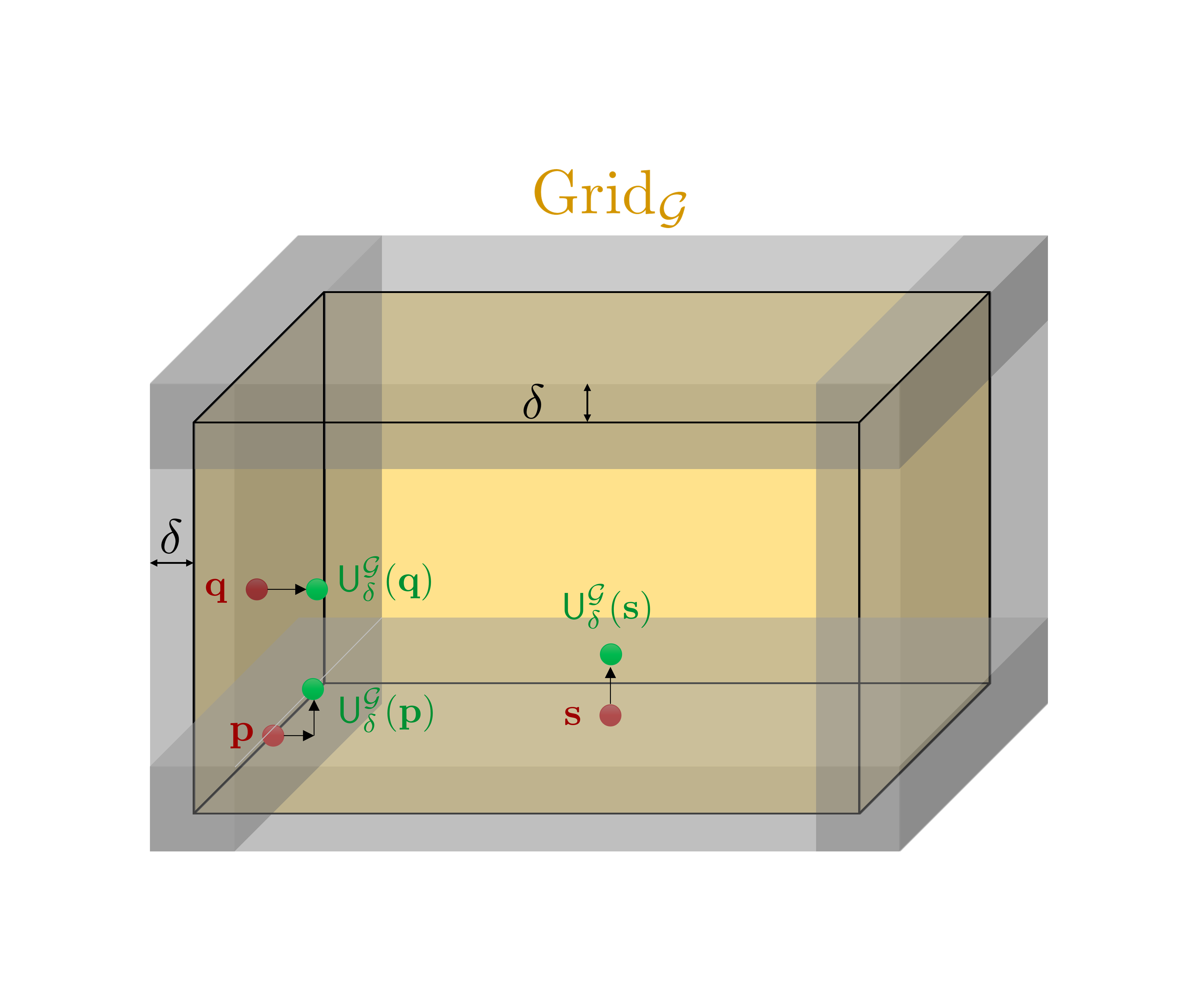} 
        \caption{$\Unmerge$ applied to points $\vec{p},\vec{q},\vec{s}\in \mathbb{R}^3$.}
    \end{subfigure}
    \caption{Examples of the unmerge function $\Unmerge$ moving points from $\text{Grid}_\mathcal{G}$ to lie $\delta$-far away from $\text{Grid}_\mathcal{G}$.}
    \label{fig:Unmerge}
\end{figure}

The following result, \Cref{lem:GradingMerge}, will be used to show that the grades of multiparameter Betti numbers of $N$ merge to the same grade and satisfy the hypotheses of \Cref{lem:Cancellation} and thus cancel each other out.

\begin{lem}[Sufficient Conditions for Grades to Merge]
Let $\vec{a},\vec{b}\in (\mathbf{R}^n,\leq)$ and $\mathcal{G}$ be a grid function such that:
$\|\vec{a} -\text{Grid}_\mathcal{G}\|_\infty \leq \delta$
and $\vec{b}\geq\vec{a}$.
If $\text{push}_{\mathcal{L}_{\Unmerge\Merge({\vec{a}})}}({\vec{a}})= \text{push}_{\mathcal{L}_{\Unmerge\Merge({\vec{a}})}}({\vec{b}})$, then $\Merge({\vec{a}}) =\Merge({\vec{b}})$.
\label{lem:GradingMerge}
\end{lem}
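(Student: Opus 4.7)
The plan is to prove the equality coordinate-wise by extracting a sharp constraint from the push equality on the slope-$\vec{1}$ line $\mathcal{L}_\vec{c}$, where $\vec{c} := \Unmerge\Merge(\vec{a})$. First I would unpack $\vec{c}$ explicitly. The hypothesis $\|\vec{a} - \text{Grid}_\mathcal{G}\|_\infty \leq \delta$ forces the index set $T := \{i : |a_i - \Ima\mathcal{G}^i| \leq \delta\}$ to be non-empty (a nearby grid point witnesses at least one coordinate lying within $\delta$ of $\Ima\mathcal{G}^i$), so $\Merge(\vec{a}) \in \text{Grid}_\mathcal{G}$ and $\vec{c}$ is defined. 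By idempotence of $\Merge$ and the definition of $\Unmerge$ one gets
\[
c_i = \begin{cases} \Merge(\vec{a})_i + \delta & i \in T, \\ a_i & i \notin T, \end{cases}
\]
so in particular $a_i - c_i \in [-2\delta, 0]$ for $i \in T$ and $a_i - c_i = 0$ for $i \notin T$.

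Next, since $\mathcal{L}_\vec{c}$ has slope $\vec{1}$, the push map admits the explicit formula $\text{push}_{\mathcal{L}_\vec{c}}(\vec{p}) = \vec{c} + \max_i(p_i - c_i)\vec{1}$. Setting $t := \max_i(a_i - c_i) \leq 0$, the push equality $\text{push}_{\mathcal{L}_\vec{c}}(\vec{a}) = \text{push}_{\mathcal{L}_\vec{c}}(\vec{b})$ becomes $\max_i(b_i - c_i) = t$, and hence $b_i \leq c_i + t$ for every $i$. Combined with $b_i \geq a_i$ coming from $\vec{b} \geq \vec{a}$, this sandwiches each $b_i$ into a narrow interval depending only on $\Merge(\vec{a})_i$.

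The remaining step is a quick case split on whether $T$ is all of $\{1,\ldots,n\}$. If $T \neq \{1,\ldots,n\}$, an index $i_0 \notin T$ witnesses $a_{i_0} - c_{i_0} = 0$ and so $t = 0$; otherwise $T$ is everything and $t \leq 0$ directly. In both subcases I would combine $a_i \leq b_i \leq c_i + t$ with the explicit form of $c_i$ to deduce $b_i \in [\Merge(\vec{a})_i - \delta, \Merge(\vec{a})_i + \delta]$ for each $i$. The controlling constant assumption $\delta < c(\mathcal{G})/2$ then guarantees that $\Merge(\vec{a})_i$ is the unique element of $\Ima\mathcal{G}^i$ within $\delta$ of any point in this interval, so $\Merge(\vec{b})_i = \Merge(\vec{a})_i$, and coordinate-wise equality yields $\Merge(\vec{a}) = \Merge(\vec{b})$. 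The main obstacle I anticipate is the bookkeeping across the two subcases, particularly verifying the upper bound $b_i \leq \Merge(\vec{a})_i + \delta$ for $i \in T$ — this is precisely where the controlling-constant bound is consumed and where $\vec{b}$ is prevented from slipping into a neighbouring grid cell.
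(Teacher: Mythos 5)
Your proof is correct and follows essentially the same route as the paper's: you identify that the push fibre above $\text{push}_{\mathcal{L}_{\vec{c}}}(\vec{a})$, intersected with the up-set of $\vec{a}$, sandwiches $\vec{b}$ into the box $[\vec{a},\Unmerge\Merge(\vec{a})]$, and then argue coordinate-wise using $c_i = \Merge(\vec{a})_i + \delta\mathds{1}\{i\in T\}$ and the controlling-constant bound to conclude $\Merge(\vec{a})=\Merge(\vec{b})$. The paper reaches the same box via the geometric description of the fibre as the boundary of a down-set, whereas you use the explicit slope-$\vec{1}$ formula $\text{push}_{\mathcal{L}_\vec{c}}(\vec{p})=\vec{c}+\max_i(p_i-c_i)\vec{1}$; your case split on $T=[n]$ is harmless but unnecessary, since $t\le 0$ always gives $b_i\le c_i$, and the only cosmetic blemish is that for $i\notin T$ the phrase ``unique element of $\Ima\mathcal{G}^i$ within $\delta$'' doesn't apply (there $\Merge(\vec{a})_i=a_i\notin\Ima\mathcal{G}^i$ and you instead conclude $b_i=a_i$ directly, which you do have).
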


\begin{proof}
Since $\text{push}_{\mathcal{L}_{\Unmerge\Merge({\vec{a}})}}({\vec{a}})= \text{push}_{\mathcal{L}_{\Unmerge\Merge({\vec{a}})}}({\vec{b}})$ and $\vec{b}\geq\vec{a}$, then $\vec{b} \in \text{push}^{-1}_{\mathcal{L}_{\Unmerge\Merge({\vec{a}})}}(0) \cap \{\geq \vec{a}\} = [\vec{a},\Unmerge\Merge({\vec{a}})]$, where $[\vec{a},\Unmerge\Merge({\vec{a}})]$ denotes the interval between $\vec{a}$ and $\Unmerge\Merge({\vec{a}})$. By definition of the unmerge function, the coordinate values of $\vec{a}$ which are unchanged by $\Merge$ are also unchanged when applying $\Unmerge$ to $\Merge(\vec{a})$. Thus $\vec{a}$ and $\vec{b}$ share coordinate values for all the coordinate values of $\vec{a}$ which are unchanged by $\Merge$. Since $\|\Unmerge\Merge(\vec{a})- \text{Grid}_\mathcal{G}\|_\infty = \delta$, then $\|\vec{b}- \text{Grid}_\mathcal{G}\|_\infty \leq \delta$, and so the remaining coordinates of $\vec{b}$ are merged to the same coordinate values of $\Merge(\vec{a})$, thus it follows that $\Merge({\vec{a}}) =\Merge({\vec{b}})$.
\end{proof}

\Cref{lem:Trichotomy} and \Cref{lem:DichotomyRelation} use the small matching distance to derive properties about the gradings of the generators and relations of $N$ and the length of the bars the generators and relations birth and kill in the fibered bar code.

\begin{lem}[Properties of Generators]

Suppose $d_0(M,N) < \kapeps$ and $c_M > 2 \kapeps$. For all $\vec{b}\in \xi_0(N)$, at least one of the following properties holds:
\begin{enumerate}[label=\text{B.\arabic*}]
    \item \label{ppty:CloseToGrid} $\vec{b}$ lies $2\kapeps$-close to $\Ima \mathcal{G}_M$ 
    \item \label{ppty:SmallNearb} $\vec{b}$ lies $\kapeps$-close to $\Grid$ and generates a bar of length less than $2\kapeps$ in the module $N^{\mathcal{L}_{\unmerge{\kappa\epsilon}{\mathcal{G}_M}\textsf{M}^{\mathcal{G}_M}_{\kapeps}(\vec{b})}}$
    \item \label{ppty:SmallInb}$\vec{b}$ generates a bar of length less than $2\kapeps$ in the module $N^{\mathcal{L}_\vec{b}}$ 
\end{enumerate}
\label{lem:Trichotomy}
\end{lem}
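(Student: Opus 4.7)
The plan is to split on the length of the bar generated by $\vec{b}$ on its own slope-$\vec{1}$ line $\mathcal{L}_\vec{b}$, and then in the long-bar case sub-split on how close $\vec{b}$ sits to the full vertex set $\Ima\mathcal{G}_M$. By \Cref{lem:nontrivgen}, $\vec{b}$ always generates a non-empty bar in $N^{\mathcal{L}_\vec{b}}$, so this exhausts the cases. If that bar has length strictly less than $2\kapeps$, property \ref{ppty:SmallInb} holds. Otherwise, since $w(\mathcal{L}_\vec{b})=1$, the hypothesis $d_0(M,N)<\kapeps$ yields $d_B(M^{\mathcal{L}_\vec{b}},N^{\mathcal{L}_\vec{b}})<\kapeps$. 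In any $\delta'$-matching realising the bottleneck with $\delta'<\kapeps$, unmatched bars have length at most $2\delta'<2\kapeps$, so our bar must pair with a genuine bar of $M^{\mathcal{L}_\vec{b}}$ born at $\text{push}_{\mathcal{L}_\vec{b}}(\vec{b}^M)$ for some $\vec{b}^M\in\xi_0(M)\subseteq\Ima\mathcal{G}_M$, with $\|\text{push}_{\mathcal{L}_\vec{b}}(\vec{b}^M)-\vec{b}\|_\infty<\kapeps$. Applying \Cref{lem:CloseToGrid} gives $\|\vec{b}-\Grid\|_\infty<\kapeps$; if in addition $\|\vec{b}-\Ima\mathcal{G}_M\|_\infty\leq 2\kapeps$, property \ref{ppty:CloseToGrid} is satisfied.

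The remaining case is that $\vec{b}$ is $\kapeps$-close to $\Grid$ but strictly more than $2\kapeps$ from every vertex of $\Ima\mathcal{G}_M$. Write $\vec{a}=\unmerge{\kapeps}{\mathcal{G}_M}\textsf{M}^{\mathcal{G}_M}_{\kapeps}(\vec{b})$, so that by construction $\vec{a}\geq\vec{b}$ and $\|\vec{a}-\Grid\|_\infty=\kapeps$. Failing $2\kapeps$-closeness to $\Ima\mathcal{G}_M$ means there is a coordinate $i^*$ whose entry $b_{i^*}$ sits more than $2\kapeps$ from every value of $\Ima\mathcal{G}^{i^*}_M$; hence $\textsf{M}^{\mathcal{G}_M}_{\kapeps}$ and $\unmerge{\kapeps}{\mathcal{G}_M}$ both fix this coordinate and $a_{i^*}=b_{i^*}$. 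Combined with $\vec{a}\geq\vec{b}$ this forces $\max_i(b_i-a_i)=0$, so $\text{push}_{\mathcal{L}_\vec{a}}(\vec{b})=\vec{a}$. To verify \ref{ppty:SmallNearb} it then suffices to show $\vec{b}$ generates a bar of length less than $2\kapeps$ in $N^{\mathcal{L}_\vec{a}}$. If instead the bar had length at least $2\kapeps$, repeating the matching argument on the slope-$\vec{1}$ line $\mathcal{L}_\vec{a}$ would produce $\vec{b}'^M\in\xi_0(M)$ with $\|\text{push}_{\mathcal{L}_\vec{a}}(\vec{b}'^M)-\vec{a}\|_\infty<\kapeps$, and \Cref{lem:CloseToGrid} would force $\|\vec{a}-\Grid\|_\infty<\kapeps$, contradicting $\|\vec{a}-\Grid\|_\infty=\kapeps$.

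The delicate step is isolating the distinguished coordinate $i^*$ along which $\vec{b}$ is grid-far: this is exactly what forces $\text{push}_{\mathcal{L}_\vec{a}}(\vec{b})$ to land on $\vec{a}$ rather than earlier on the line, so that the strict matching inequality collides with the equality $\|\vec{a}-\Grid\|_\infty=\kapeps$. Carefully tracking strict versus non-strict inequalities between the hypothesised bound on $d_0$, the induced bottleneck bound on each $\mathcal{L}$, and the construction of $\vec{a}$ is the main remaining bookkeeping.
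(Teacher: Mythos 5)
Your proof is correct and follows essentially the same structure as the paper's: first split on whether $\vec{b}$ births a short bar on $\mathcal{L}_{\vec{b}}$, then use the matching together with \Cref{lem:CloseToGrid} to place $\vec{b}$ near $\Grid$, and finally handle the grid-far case by pushing to the unmerged line $\mathcal{L}_{\vec{a}}$ and arguing the bar must be unmatchable. Your contrapositive phrasing of the last step (deriving $\|\vec{a}-\Grid\|_\infty<\kapeps$ from a hypothetical match and contradicting the exact equality $\|\vec{a}-\Grid\|_\infty=\kapeps$), together with the explicit identification $\text{push}_{\mathcal{L}_{\vec{a}}}(\vec{b})=\vec{a}$ via the distinguished coordinate $i^*$, is if anything a slightly more careful rendering of what the paper asserts more tersely.
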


\begin{proof}
Consider the module $N^{\mathcal{L}_\vec{b}}$. Since $\vec{b}\in \xi_0(N)$ is a non-trivial generator, $\vec{b}$ generates a non-trivial bar in $N^{\mathcal{L}_\vec{b}}$ (\Cref{lem:nontrivgen}). Suppose the bar in $N^{\mathcal{L}_\vec{b}}$ generated by $\vec{b}$ has length  $< 2 \kapeps$, then we satisfy property \ref{ppty:SmallInb}. 

Else the bar generated by $\vec{b}$ in $N^{\mathcal{L}_\vec{b}}$ has length $\geq 2 \kapeps$ and must be $\kapeps$-matched with a bar in the module $M^{\mathcal{L}_\vec{b}}$. Hence by \Cref{lem:CloseToGrid}, $\vec{b}$ is $\kapeps$-close to $\Grid$. If $\vec{b}$ is $2\kapeps$-close to $\Ima \mathcal{G}_M$ we satisfy \ref{ppty:CloseToGrid}, else $\vec{b}$ is $> 2 \kapeps$ from all grid points of $M$.

$\textsf{M}_{\kapeps}^{\mathcal{G}_M}(\vec{b})$ lies on $\Grid$ but is not in the image of $\mathcal{G}_M$. 
Using the fact that $c_M> 2\kapeps$ and at least one of the coordinates of $\vec{b}$ is unchanged by $\textsf{M}_{\kapeps}^{\mathcal{G}_M}$, we have that the $\kapeps$-close line $\mathcal{L}_{\unmerge{\kapeps}{\mathcal{G}_M}\textsf{M}_{\kapeps}^{\mathcal{G}_M}(\vec{b})}$ is such that $\|\text{push}_{\mathcal{L}_{\unmerge{\kapeps}{\mathcal{G}_M}\textsf{M}_{\kapeps}^{\mathcal{G}_M}(\vec{b})}}(\vec{b})-\text{push}_{\mathcal{L}_{\unmerge{\kapeps}{\mathcal{G}_M}\textsf{M}_{\kapeps}^{\mathcal{G}_M}(\vec{b})}}(\mathcal{G}_M)\|_\infty=  \kapeps$.
Since $\text{push}_{\mathcal{L}_{\unmerge{\kapeps}{\mathcal{G}_M}\textsf{M}_{\kapeps}^{\mathcal{G}_M}(\vec{b})}}(\textsf{M}_{\kapeps}^{\mathcal{G}_M}(\vec{b}))$ is $\kapeps$ far from all points of $\text{push}_{\mathcal{L}_{\unmerge{\kapeps}{\mathcal{G}_M}\textsf{M}_{\kapeps}^{\mathcal{G}_M}(\vec{b})}}(\mathcal{G}_M)$ and $d_0(M,N)< \kapeps$, then the bar generated by $\vec{b}$ in ${N}^{\mathcal{L}_{\unmerge{\kapeps}{\mathcal{G}_M}\textsf{M}_{\kapeps}^{\mathcal{G}_M}(\vec{b})}}$ cannot be matched to bars in $M^{\mathcal{L}_{\unmerge{\kapeps}{\mathcal{G}_M}\textsf{M}_{\kapeps}^{\mathcal{G}_M}(\vec{b})}}$ and so must be length $<2\kappa \epsilon$.
\end{proof}


\begin{lem}[Properties of Relations]
Suppose $d_0(M,N) < \kapeps$, $\xi_0(N)\subset \Ima \mathcal{G}_M$  and that $c_M > 4 \kappa \epsilon$. For all $\vec{r}\in \xi_1(N)$, one of the following properties holds:
\begin{enumerate}[label=\text{R.\arabic*}]
\item $\vec{r}$ is $2\kapeps$-close to $\Ima \mathcal{G}_M$
\item $\vec{r}$ is $2\kapeps$-close to $\Grid$ and for all $\alpha \in (0,\frac{c_M - 4\kapeps}{2})$ if it kills a bar in the module $N^{\mathcal{L}_{\unmerge{2\kapeps + \alpha}{\mathcal{G}_M}\textsf{M}_{2\kapeps}^{\mathcal{G}_M}(\vec{r})}}$ that bar is of length less than $2\kapeps$
\end{enumerate}
\label{lem:DichotomyRelation}
\end{lem}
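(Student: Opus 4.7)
The plan is to mirror the proof of \Cref{lem:Trichotomy}, substituting \Cref{lem:nontrivrel} for \Cref{lem:nontrivgen} to extract bar information associated to $\vec{r}$. Fix $\vec{r} \in \xi_1(N)$. By \Cref{lem:nontrivrel}, $\vec{r}$ kills a bar in $N^{\mathcal{L}_\vec{r}}$, born at $\text{push}_{\mathcal{L}_\vec{r}}(\vec{b})$ for some $\vec{b} \in \xi_0(N) \subset \Ima \mathcal{G}_M$; in particular every coordinate of $\vec{b}$ lies on $\Grid$ by the hypothesis on $\xi_0(N)$.

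I first show that $\vec{r}$ is always $2\kapeps$-close to $\Grid$, the common content of R.1 and R.2. Let $L$ denote this bar's length. If $L \geq 2\kapeps$, then since $w(\mathcal{L}_\vec{r}) = 1$ and $d_0(M,N) < \kapeps$ the bar must be matched with a bar in $M^{\mathcal{L}_\vec{r}}$ whose death is $\text{push}_{\mathcal{L}_\vec{r}}(\vec{s})$ for some $\vec{s} \in \xi_1(M) \subset \Ima \mathcal{G}_M$ with $\|\vec{r} - \text{push}_{\mathcal{L}_\vec{r}}(\vec{s})\|_\infty < \kapeps$; \Cref{lem:CloseToGrid} puts some coordinate of $\vec{r}$ within $\kapeps$ of a grid coordinate of $\vec{s}$. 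If instead $L < 2\kapeps$, then $\|\vec{r} - \text{push}_{\mathcal{L}_\vec{r}}(\vec{b})\|_\infty < 2\kapeps$ and \Cref{lem:CloseToGrid} analogously gives $\vec{r}$ within $2\kapeps$ of $\Grid$.

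If in addition $\vec{r}$ is $2\kapeps$-close to $\Ima \mathcal{G}_M$ we are in R.1; otherwise I establish the bar-length clause of R.2. Fix $\alpha \in (0, (c_M - 4\kapeps)/2)$ and let $\vec{c} := \unmerge{2\kapeps+\alpha}{\mathcal{G}_M}\textsf{M}^{\mathcal{G}_M}_{2\kapeps}(\vec{r})$. Partition the coordinates $i \in [n]$ by the distance of $r_i$ to $\Ima \mathcal{G}^i_M$ into \emph{merged} ($\leq 2\kapeps$), \emph{close-unmerged} (in $(2\kapeps, 2\kapeps+\alpha]$), and \emph{far-unmerged} ($>2\kapeps+\alpha$). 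Unrolling the definitions of $\textsf{M}^{\mathcal{G}_M}_{2\kapeps}$ and $\unmerge{2\kapeps+\alpha}{\mathcal{G}_M}$ shows $c_i$ equals the nearest $M$-grid value plus $(2\kapeps+\alpha)$ in the first two types and equals $r_i$ in the third; in particular $\vec{r} \leq \vec{c}$, so that $\text{push}_{\mathcal{L}_\vec{c}}(\vec{r}) = \vec{c}$. Suppose for contradiction that the bar $\vec{r}$ kills on $\mathcal{L}_\vec{c}$ has length $\geq 2\kapeps$: matching yields $\vec{s} \in \xi_1(M)$ with $\|\vec{c} - \text{push}_{\mathcal{L}_\vec{c}}(\vec{s})\|_\infty < \kapeps$, and \Cref{lem:CloseToGrid} produces an index $i$ with $|s_i - c_i| < \kapeps$.

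The main obstacle is to rule out such an index $i$ in each coordinate type. For the two grid-close types $c_i$ sits at distance exactly $2\kapeps+\alpha$ from the nearest $M$-grid value and at distance at least $c_M - (2\kapeps+\alpha)$ from every other; since $\alpha > 0$ we get $2\kapeps+\alpha > \kapeps$, and since $c_M > 4\kapeps$ with $\alpha < (c_M-4\kapeps)/2$ we get $c_M - (2\kapeps+\alpha) > c_M/2 > \kapeps$. For far-unmerged coordinates $c_i = r_i$ is already at distance $>2\kapeps+\alpha > \kapeps$ from every grid value. Hence $|s_i - c_i| > \kapeps$ in every case, contradicting the matching bound and forcing the bar on $\mathcal{L}_\vec{c}$ to have length $<2\kapeps$, completing R.2.
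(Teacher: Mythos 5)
Your proof follows the same two-step strategy as the paper: first show every $\vec{r}\in\xi_1(N)$ is $2\kapeps$-close to $\Grid$ via the bar it kills on $\mathcal{L}_\vec{r}$, then use the unmerge construction to produce a nearby line on which that bar must be unmatched and hence short. Part one is correct and closely tracks the paper's argument, with the birth $\vec{b}$ made a bit more explicit.

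Part two contains a genuine gap at the inference ``$\vec{r}\leq\vec{c}$, so that $\text{push}_{\mathcal{L}_\vec{c}}(\vec{r})=\vec{c}$.'' This does not follow from $\vec{r}\leq\vec{c}$: the push onto the slope-$\vec{1}$ line through $\vec{c}$ is $\vec{c}-t\vec{1}$ with $t=\min_i(c_i-r_i)$, and $t=0$ requires some coordinate with $c_i=r_i$, i.e.\ a far-unmerged coordinate. By your own unrolling, $c_i-r_i>0$ on every merged coordinate and on every close-unmerged coordinate except at the boundary $r_i-g_i=2\kapeps+\alpha$. Since the lemma quantifies over all $\alpha\in\bigl(0,\tfrac{c_M-4\kapeps}{2}\bigr)$, nothing prevents every coordinate of $\vec{r}$ from landing in the merged or strictly close-unmerged classes, in which case $\text{push}_{\mathcal{L}_\vec{c}}(\vec{r})$ is a strict predecessor of $\vec{c}$ on the line and your subsequent use of $\vec{c}$ as the death value is unsound. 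Worse, when the coordinate responsible for $\vec{r}$ not being $2\kapeps$-close to $\Ima\mathcal{G}_M$ is close-unmerged from below ($r_i<g_i$) while another coordinate of $\vec{r}$ lies exactly on the grid, the push of $\vec{r}$ can coincide with a grid push: with $n=2$, $\mathcal{G}^1_M=\{0\}$, $\mathcal{G}^2_M=\{0,10\}$, $\kapeps=1$, $\alpha=1$, $\vec{r}=(0,7.5)$ one finds $\vec{c}=\unmerge{3}{\mathcal{G}_M}\textsf{M}_{2}^{\mathcal{G}_M}(\vec{r})=(3,13)$ and $\text{push}_{\mathcal{L}_\vec{c}}(\vec{r})=(0,10)=\text{push}_{\mathcal{L}_\vec{c}}((0,10))$, so \Cref{lem:CloseToGrid} has nothing to rule out. (The paper's own displayed claim that $\|\text{push}_{\mathcal{L}_\vec{c}}(\vec{r})-\text{push}_{\mathcal{L}_\vec{c}}(\Ima\mathcal{G}_M)\|_\infty=2\kapeps+\alpha$ fails in this same example, so you are inheriting an imprecision from the source rather than inventing one; but your writeup promotes it to an explicit non-sequitur, and the step needs repair, e.g.\ by arguing only for $\alpha$ small enough that every non-merged coordinate of $\vec{r}$ stays far-unmerged, or by estimating $\min_i(c_i-r_i)$ directly and treating the close-unmerged-from-below case separately.)
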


\begin{proof}
For each ${\vec{r}} \in \xi_1({N})$ 
consider the bar killed by ${\vec{r}}$ in the module 
${N}^{\mathcal{L}_{{\vec{r}}}}$ (\Cref{lem:nontrivrel}). Since 
$d_I(M,{N})< \kappa \epsilon$ the bar killed by 
${\vec{r}}$ is either $ \kapeps$-matched to a bar of 
$M^{\mathcal{L}_{{\vec{r}}}}$ and thus $\vec{r}$ is $\kapeps$-close to $\Grid$ (by \Cref{lem:CloseToGrid}), else the bar is $2 \kapeps$-trivial and 
unmatched -- in which case it is $2\kapeps$-close to a coordinate of a generator of $N$ and since $\xi_0(N)\subset  \Ima \mathcal{G}_M $ then $\vec{r}$ is $2\kapeps$-close to $\Grid$. In either case
${\vec{r}}$ is $2 \kapeps$-close to $\Grid$.

If $\|{\vec{r}}- \Ima \mathcal{G}_M\|_\infty > 2 \kapeps$ then at least one of the coordinates of $\vec{r}$ is unchanged by $\textsf{M}_{2\kapeps}^{\mathcal{G}_M}$ and so we yield that: 
$$\|\text{push}_{\mathcal{L}_{\unmerge{2\kapeps+\alpha}{\mathcal{G}_M}\textsf{M}_{2\kapeps}^{\mathcal{G}_M}({\vec{r}})}}(\vec{r})-\text{push}_{\mathcal{L}_{\unmerge{2\kapeps+\alpha}{\mathcal{G}_M}\textsf{M}_{2\kapeps}^{\mathcal{G}_M}({\vec{r}})}}( \Ima \mathcal{G}_M)\|_\infty =  2\kapeps+\alpha$$ 
Hence any bar killed by $\vec{r}$ in the module $N^{\mathcal{L}_{\unmerge{2\kapeps}{\mathcal{G}_M}\textsf{M}_{2\kapeps}^{\mathcal{G}_M}(\vec{r})}}$ is unmatched and so of length less than $2\kapeps$ as $d_0(M,N)<\kapeps$.
\end{proof}


Using information about the location of the generators and relations of $N$ together with which generators and relations birth or kill small bars, we find a nearby module $\Tilde{N}$ respecting the grid of $M$. The nearby module is attained by a series of merges and simplifications applied to $N$. 

\begin{lem}[$\cup_i \xi_i(\Tilde{N}) \subset\Ima \mathcal{G}$]
Suppose $d_0(M,N) < \kapeps$, $d_I(M,N) = \varepsilon$ and $c_M > 40\kappa \epsilon$. There exists a module $\Tilde{N}$ such that $\cup_i \xi_i(\Tilde{N}) \subset \Ima \mathcal{G}_M$, and $d_I(N,\Tilde{N})\leq 34 \kappa \epsilon$.
\label{lem:UniteGrids}
\end{lem}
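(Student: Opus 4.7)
The plan is to build $\tilde{N}$ by applying to $N$ a sequence of merge and simplification functors (from \Cref{sec:M&SFunctors}), each costing at most $O(\kappa\varepsilon)$ in the interleaving distance, arranged so that the total cost is at most $34\kappa\varepsilon$ and every multiparameter Betti number of the result lies in $\Ima \mathcal{G}_M$.

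First I would handle the zeroth Betti numbers. By \Cref{lem:Trichotomy}, every $\vec{b}\in\xi_0(N)$ satisfies one of \ref{ppty:CloseToGrid}, \ref{ppty:SmallNearb}, \ref{ppty:SmallInb}. Generators satisfying \ref{ppty:CloseToGrid} lie within $2\kappa\varepsilon$ of $\Ima\mathcal{G}_M$ and are transported exactly onto the grid by a single application of $\textsf{M}_{2\kappa\varepsilon}^{\mathcal{G}_M}$. Generators satisfying \ref{ppty:SmallInb} generate a bar of length $<2\kappa\varepsilon$ along the diagonal line $\mathcal{L}_\vec{b}$; by \Cref{lem:BarcodeSimp} and \Cref{lem:SimplificationGenerators} the simplification $\textsf{S}_{2\kappa\varepsilon}$ introduces a relation cohabiting the same grade, and the Cancellation Lemma (\Cref{lem:Cancellation}) deletes the generator from the minimal resolution. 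Generators satisfying \ref{ppty:SmallNearb} are first moved onto $\text{Grid}_{\mathcal{G}_M}$ by $\textsf{M}_{\kappa\varepsilon}^{\mathcal{G}_M}$; afterwards their short-bar line becomes diagonal through the new grade, and a second simplification cancels them as in the previous case. Ordering the operations as simplify, merge, simplify, merge ensures that each generator is handled exactly once, yielding an intermediate module $N'$ with $\xi_0(N')\subset \Ima\mathcal{G}_M$.

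Next I would handle the first Betti numbers. Since $N'$ is $O(\kappa\varepsilon)$-close to $N$ in $d_I$ and hence to $M$ by the triangle inequality, \Cref{lem:DichotomyRelation} applies to $N'$: each $\vec{r}\in\xi_1(N')$ is either within $2\kappa\varepsilon$ of $\Ima\mathcal{G}_M$ (R.1) or kills only short bars on a specific line (R.2). A symmetric pair of merges and simplifications (analogous to the generator case) produces $\tilde{N}$ with $\xi_1(\tilde{N})\subset \Ima\mathcal{G}_M$. \Cref{lem:xi0xi1detgrid} then automatically places every higher $\xi_j(\tilde{N})$ in $\Ima\mathcal{G}_M$ as a join of the preceding ones. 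Summing the interleaving costs of all the operations gives $d_I(N,\tilde{N})\leq 34\kappa\varepsilon$; the hypothesis $c_M>40\kappa\varepsilon$ guarantees that every merge parameter stays strictly below $c_M/2$ as required by the merge functor's definition at each intermediate stage.

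The main obstacle is the bookkeeping: after each functor application I must verify that the surviving generators and relations still satisfy the hypotheses of \Cref{lem:Trichotomy} or \Cref{lem:DichotomyRelation} with appropriately adjusted constants, since both the matching and interleaving distances to $M$ drift under the operations, and that each use of \Cref{lem:Cancellation} is legal at the intermediate stage at which it is invoked. A delicate point is that a merge may coalesce several generators or relations at the same grade, so one must check that no desired generator is inadvertently destroyed and no undesired one re-introduced; the explicit constant $34$ emerges from tallying the parameters of the four-or-so operations used in each of the two phases.
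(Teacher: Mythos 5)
The high-level strategy — a composition of simplification and merge functors whose interleaving costs sum to $34\kappa\varepsilon$, first clearing $\xi_0$ and then $\xi_1$, with \Cref{lem:xi0xi1detgrid} finishing the higher $\xi_i$ — is exactly the paper's plan. However there are two substantive gaps in the way you execute it.

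First, your mechanism for eliminating \ref{ppty:SmallNearb}-type generators is wrong as stated. You propose to merge first (``first moved onto $\text{Grid}_{\mathcal{G}_M}$ by $\textsf{M}_{\kappa\varepsilon}^{\mathcal{G}_M}$; afterwards their short-bar line becomes diagonal through the new grade, and a second simplification cancels them as in the previous case''). But the line along which a \ref{ppty:SmallNearb} generator produces a short bar is $\mathcal{L}_{\unmerge{\kappa\varepsilon}{\mathcal{G}_M}\textsf{M}^{\mathcal{G}_M}_{\kappa\varepsilon}(\vec{b})}$, the diagonal through the \emph{unmerge} point, which is not the diagonal through the merged grade $\textsf{M}_{\kappa\varepsilon}^{\mathcal{G}_M}(\vec{b})$; merging does not transform such a generator into a \ref{ppty:SmallInb}-type generator, and \Cref{lem:nontrivgen} does not apply after the merge. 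The paper uses the opposite order — $\textsf{S}_{2\kappa\varepsilon}$ first, to turn the short bar into a zero-length bar on that specific line — and then the key ingredient you never invoke, \Cref{lem:GradingMerge}, which shows that $\textsf{M}^{\mathcal{G}_M}_{2\kappa\varepsilon}$ brings the generator and the relation that kills its zero-length bar to the \emph{same grade}, so that \Cref{lem:Cancellation} can fire. Without \Cref{lem:GradingMerge} there is no reason for the generator and relation to coincide, and the cancellation step is not justified. The same lemma is needed again in the relations phase, paired with second-order relations in $\xi_2$; your sketch of the relations phase omits it as well.

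Second, the ``bookkeeping'' you defer to the end is the entire content of the lemma, and your proposed operation count does not obviously sum to $34$. You suggest a sequence of four operations for the generator phase alone (``simplify, merge, simplify, merge''), which, together with a further ``symmetric pair of merges and simplifications'' for the relations phase, would use at least six operations with ad hoc parameters. The paper uses exactly four operations total, $\textsf{M}^{\mathcal{G}_M}_{20\kappa\varepsilon}\circ \textsf{S}_{10\kappa\varepsilon}\circ \textsf{M}^{\mathcal{G}_M}_{2\kappa\varepsilon}\circ \textsf{S}_{2\kappa\varepsilon}$, whose costs $2+2+10+20=34$ (in units of $\kappa\varepsilon$) give the constant, and the factor of $5$ jump in the parameter between the generator and relation phases is forced by reapplying \Cref{lem:DichotomyRelation} to $\hat{N}$ with $d_0(M,\hat{N})<5\kappa\varepsilon$ after the first two operations. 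You also need $c_M>40\kappa\varepsilon$ precisely to keep the largest merge parameter $20\kappa\varepsilon$ strictly below $c_M/2$; it is worth noting explicitly which operation saturates this bound. As written, the proposal identifies the right functorial toolkit but leaves the load-bearing argument — why the cancellations actually happen, and why the constant is $34$ — unproven.
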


\begin{proof}
\Cref{lem:Trichotomy} establishes that for all $\vec{b}\in \xi_0(N)$ at least one of the following holds:
\begin{enumerate}[label=\text{B.\arabic*}]
    \item  $\vec{b}$ lies $2\kapeps$-close to $\Ima \mathcal{G}_M$ 
    \item  $\vec{b}$ lies $\kapeps$-close to $\Grid$ and generates a bar of length less than $2\kapeps$ in the module $N^{\mathcal{L}_{\unmerge{\kappa\epsilon}{\mathcal{G}_M}\textsf{M}^{\mathcal{G}_M}_{\kapeps}(\vec{b})}}$
    \item \label{item:property3}$\vec{b}$ generates a bar of length less than $2\kapeps$ in the module $N^{\mathcal{L}_\vec{b}}$ 

\end{enumerate}

Consider $N' = \textsf{S}_{2\kapeps}(N)$. By \Cref{lem:BarcodeSimp} every generator $\vec{b}$ satisfying \ref{item:property3} now generates a bar of zero length in the module $N^{\mathcal{L}_\vec{b}}$.
\Cref{lem:nontrivgen} implies that such a generator satisfying \ref{item:property3} is no longer a non-trivial generator of $N'$.
Moreover by \Cref{lem:SimplificationGenerators} the grades of generators $\vec{b}'\in \xi_0(N')$ are inherited from generators of some $\vec{b} \in \xi_0(N)$.
Thus for every $\vec{b}'\in \xi_0(N')$ at least one of the following holds:

\begin{enumerate}[label=\text{B'.\arabic*}]
    \item \label{item:property'1} $\vec{b}'$ lies $2\kapeps$-close to $\Ima\mathcal{G}_M$  
    \item \label{item:property'2} $\vec{b}'$ lies $\kapeps$-close to $\Grid$ and generates a bar of zero-length in the module $N'^{\mathcal{L}_{\unmerge{\kappa\epsilon}{\mathcal{G}_M}\textsf{M}^{\mathcal{G}_M}_{\kapeps}(\vec{b})}}$
\end{enumerate}

Consider $\hat{N} = \textsf{M}^{\mathcal{G}_M}_{2\kapeps}(N')$. Every generator $\vec{b}'$ satisfying \ref{item:property'1} now lies on the grid $\mathcal{G}_M$. Every generator $\vec{b}'$ satisfying \ref{item:property'2} lies $\kappa \epsilon$-close to $\Grid$ and there is a relation $\vec{r}'$ which kills the zero bar generated by $\vec{b}'$ in ${N'}^{\mathcal{L}_{\unmerge{\kappa\epsilon}{\mathcal{G}_M}\textsf{M}^{\mathcal{G}_M}_{\kapeps}(\vec{b}')}}$. \Cref{lem:GradingMerge} implies that under $\textsf{M}^{\mathcal{G}_M}_{2\kapeps}$ the generator $\vec{b}'$ and relation $\vec{r}'$ pair are brought to the same grade. 
Hence $\textsf{M}^{\mathcal{G}_M}_{2\kapeps}(\vec{b}')$ and $\textsf{M}^{\mathcal{G}_M}_{2\kapeps}(\vec{r}')$ satisfy the hypotheses of \Cref{lem:Cancellation}, and so any generator which satisfied \ref{item:property'2} is no longer a non-trivial generator of $\hat{N}$.
Hence all non-trivial generators lie on the grid,  $\xi_0(\hat{N})\subset \Ima\mathcal{G}_M$, and by construction $d_I(M,\hat{N})< \kapeps + 2\kapeps + 2\kapeps \leq 5 \kapeps$.


For each $\hat{\vec{r}} \in \xi_1(\hat{N})$ the hypotheses of \Cref{lem:DichotomyRelation} hold (replacing $\kapeps$ with $5\kappa \epsilon$), thus $\hat{\vec{r}}$ is $10 \kapeps$-close to $\Grid$. More precisely for each relation $\hat{\vec{r}} \in \xi_1(\hat{N})$ at least one of the following holds:

\begin{enumerate}[label=\text{R.\arabic*}]
    \item \label{item:relproperty1} $\hat{\vec{r}}$ lies $10\kapeps$-close to $\Ima \mathcal{G}_M$ 
    \item \label{item:relproperty2}$\hat{\vec{r}}$ lies $10 \kapeps$-close to $\Grid$ and for all $\alpha \in (0,\frac{c_M - 10\kapeps}{2})$ if it kills a bar in the module $\hat{N}^{\mathcal{L}_{\unmerge{10\kapeps +\alpha}{\mathcal{G}_M}\textsf{M}_{10\kapeps}^{\mathcal{G}_M}(\vec{r})}}$ that bar is of length less than $10\kapeps$ 
\end{enumerate}

Consider $\Tilde{N} = \textsf{M}^{\mathcal{G}_M}_{20\kapeps}\circ \textsf{S}_{10\kapeps}(\hat{N})$. Any relation $\hat{\vec{r}} \in \xi_1(\hat{N})$ satisfying \ref{item:relproperty1} is shifted in grading by no more than $10\kapeps$ by the functor $\textsf{S}_{10\kapeps}$ and thus lies on a grid point of $\mathcal{G}_M$ after applying $\textsf{M}^{\mathcal{G}_M}_{20\kapeps}$.

Consider instead a relation $\hat{\vec{r}} \in \xi_1(\hat{N})$ satisfying \ref{item:relproperty2} and not \ref{item:relproperty1}. Property \ref{item:relproperty2} implies that $\textsf{M}^{\mathcal{G}_M}_{20\kapeps}(\hat{\vec{r}})$ lies on $\Grid$. 
Moreover property \ref{item:relproperty2} implies that if $\hat{\vec{r}}$ killed a bar in the module $\hat{N}^{\mathcal{L}_{\unmerge{10\kapeps+\alpha}{\mathcal{G}_M}\textsf{M}_{10\kapeps}^{\mathcal{G}_M}(\vec{r})}}$ it was of length less than $10\kapeps$. The effect of the simplification functor on bar codes (\Cref{lem:BarcodeSimp}) together with our previous observation implies that if $\textsf{S}_{10\kapeps}(\hat{\vec{r}})$ kills a bar in the module 
$\textsf{S}_{10\kapeps}({\hat{N})^{\unmerge{10\kapeps+\alpha}{\mathcal{G}_M}\textsf{M}_{10\kapeps}^{\mathcal{G}_M}(\vec{r})}}$ then that bar is of length zero.

It is not possible for $\textsf{S}_{10\kapeps}(\hat{\vec{r}})$ to kill a zero length bar in $\textsf{S}_{10\kapeps}({\hat{N})^{\mathcal{L}_{\unmerge{10\kapeps+\alpha}{\mathcal{G}_M}\textsf{M}_{10\kapeps}^{\mathcal{G}_M}(\vec{r})}}}$ since $\hat{\vec{r}}$ is shifted in grading by no more than $10\kapeps$ by the functor $\textsf{S}_{10\kapeps}$, and $\hat{\vec{r}}$ was $10\kapeps +\alpha$ far from $\Ima\mathcal{G}_M$ when pushed to the line $\mathcal{L}' = \mathcal{L}_{\unmerge{10\kapeps+\alpha}{\mathcal{G}_M}\textsf{M}_{10\kapeps}^{\mathcal{G}_M}(\vec{r})}$:
$$\|\text{push}_{\mathcal{L}'}(\textsf{S}_{10\kapeps}(\hat{\vec{r}}))- \text{push}_{\mathcal{L}'}(\Ima \mathcal{G}_M)\| \geq \|\text{push}_{\mathcal{L}'}(\hat{\vec{r}})- \text{push}_{\mathcal{L}'}(\Ima \mathcal{G}_M)\| - 10\kapeps = \alpha > 0$$
Hence $\hat{\vec{r}}$ cannot be pushed to the same value as a generator in the line the line $\mathcal{L}'$ and so does not kill a zero length bar.

Thus it must be that there is a second order relation $\vec{a} \in \xi_2(\textsf{S}_{10\kapeps}(\hat{N}))$ with $\text{push}_{\mathcal{L}'}(\vec{a}) = \text{push}_{\mathcal{L}'}(\textsf{S}_{10\kapeps}(\hat{\vec{r}}))$ and $\langle p_2(\vec{a}), \textsf{S}_{10\kapeps}(\hat{\vec{r}})\rangle \neq 0$.
\Cref{lem:GradingMerge} implies that under the merge $\textsf{M}^{\mathcal{G}_M}_{20\kapeps}$ the relation $\textsf{S}_{10\kapeps}(\hat{\vec{r}})$ and second order relation $\vec{a}$ are brought to the same grade, $\text{gr}(\textsf{M}^{\mathcal{G}_M}_{20\kapeps}\textsf{S}_{10\kapeps}(\hat{\vec{r}}))=\text{gr}(\textsf{M}^{\mathcal{G}_M}_{20\kapeps}({\vec{a}}))$. Under this merge the hypotheses of \Cref{lem:Cancellation} are satisfied for the relation $\textsf{M}^{\mathcal{G}_M}_{20\kapeps}\textsf{S}_{10\kapeps}(\hat{\vec{r}})$ and second-order relation $\textsf{M}^{\mathcal{G}_M}_{20\kapeps}(\vec{a})$ and thus $\textsf{M}^{\mathcal{G}_M}_{20\kapeps}\textsf{S}_{10\kapeps}(\hat{\vec{r}})$ is no longer a non-trivial relation.

In total we observe that $\Tilde{N} = \textsf{M}^{\mathcal{G}_M}_{20\kapeps}\circ \textsf{S}_{10\kapeps}\circ \textsf{M}^{\mathcal{G}_M}_{2\kapeps}\circ \textsf{S}_{2\kapeps}(N)$ is such that  $ \xi_0(\Tilde{N})\cup \xi_1(\Tilde{N})\subset \mathcal{G}_M$ and \Cref{lem:xi0xi1detgrid} implies that in fact $\cup_i \xi_i(\Tilde{N})\subset \mathcal{G}_M$. Since the simplification functor $\textsf{S}_{\delta}$ and merge functor $\Merge$ each perturb a module by no more that $\delta$ in the interleaving distance we have that $d_I(N,\Tilde{N})\leq 20\kapeps+10\kapeps+2\kapeps+2\kapeps= 34\kapeps$.




 
\end{proof}

\begin{Remark}
Whilst not explicitly remarked upon in the proof of \Cref{lem:UniteGrids}, one should note that the cancellations of multiparameter Betti numbers caused by bringing these Betti numbers to the same grade by applying a merge functor hold up to multiplicity. For example, if there are $l$ generators $\vec{b}_1,...,\vec{b}_l$ satisfying \ref{ppty:SmallNearb} which all merge to the same grade, then $\unmerge{\kappa\epsilon}{\mathcal{G}_M}\textsf{M}^{\mathcal{G}_M}_{\kapeps}(\vec{b_1})=...=\unmerge{\kappa\epsilon}{\mathcal{G}_M}\textsf{M}^{\mathcal{G}_M}_{\kapeps}(\vec{b_l})$ and thus there are $l$ relations killing the small bars generated by the $\vec{b}_1,...,\vec{b}_l$ in the module $N^{\mathcal{L}_{\unmerge{\kappa\epsilon}{\mathcal{G}_M}\textsf{M}^{\mathcal{G}_M}_{\kapeps}(\vec{b_1})}}$, and so all these generators will be cancelled.
\end{Remark}











\Cref{lem:LesnickLemma} is a specialised version of {\cite[Lemma~6.7]{lesnick_theory_2015}}:

\begin{lem}[An Interleaving Smaller than the Controlling Constant Yields an Isomorphism {\cite[Lemma~6.7]{lesnick_theory_2015}}]
Suppose $M,N \in \fpmultimodules$ are $\varepsilon$-interleaved and that for all $\vec{a}\in \bigcup_i \xi_i(M) \cup \xi_i(N)$ the $2\varepsilon$ transition morphism $(M_\vec{a}\to M_{\vec{a}+2\varepsilon})$ is an isomorphism of vector spaces and the $2\varepsilon$ transition morphism $(N_\vec{a}\to N_{\vec{a}+2\varepsilon})$ is also an isomorphism of vector spaces, then $M,N$ are isomorphic.
\label{lem:LesnickLemma}
\end{lem}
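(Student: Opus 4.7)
The plan is to follow the argument of Lesnick's original \cite[Lemma~6.7]{lesnick_theory_2015} in our notation, exploiting that finitely presented modules are built from a finite cell structure determined by their Betti grids. First I would unpack the $\varepsilon$-interleaving into natural transformations $f: M \Rightarrow NT_\varepsilon$ and $g: N \Rightarrow MT_\varepsilon$ satisfying the identities $T^\ast_\varepsilon(g) \circ f = \varphi^M_{2\varepsilon}$ and $T^\ast_\varepsilon(f) \circ g = \varphi^N_{2\varepsilon}$. The stability hypothesis is exactly the statement that these composite internal translations are isomorphisms at every Betti grade of $M$ and $N$.

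The first technical step is pointwise: for every $\vec{a} \in \bigcup_i \xi_i(M)$, the equality $g_{\vec{a}+\varepsilon}\circ f_\vec{a} = \varphi^M_{2\varepsilon}|_\vec{a}$ together with the hypothesis forces $f_\vec{a}$ to be a monomorphism and $g_{\vec{a}+\varepsilon}$ to be an epimorphism. Dually, for every $\vec{a} \in \bigcup_i \xi_i(N)$, the map $g_\vec{a}$ is mono and $f_{\vec{a}+\varepsilon}$ is epi. The next step is to propagate these properties from the finite sets of Betti grades to all of $\mathbb{R}^n$. For this, I would use that in the complement of $\text{Grid}_{\mathcal{G}_M} \cup \text{Grid}_{\mathcal{G}_N}$ the internal transitions of both $M$ and $N$ are isomorphisms, so every grade $\vec{a} \in \mathbb{R}^n$ admits a Betti grade $\vec{a}_\downarrow \le \vec{a} \le \vec{a}_\uparrow$ in each of $M$ and $N$ such that the transitions $M_{\vec{a}_\downarrow} \to M_{\vec{a}_\uparrow}$ and $N_{\vec{a}_\downarrow} \to N_{\vec{a}_\uparrow}$ are isomorphisms. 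This reduces the verification of any mono/epi claim about $f_\vec{a}$ or $g_\vec{a}$ at a general grade to the corresponding assertion at a nearby Betti grade.

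With pointwise mono and epi data in hand at appropriate shifts, I would construct an explicit isomorphism $\Phi: M \to N$ by composing $f$ with the inverse of an internal translation of $N$. Concretely, for each $\vec{a}$ I would set $\Phi_\vec{a} = \bigl(N(\vec{a}\leq\vec{a}+\varepsilon)\bigr)^{-1} \circ f_\vec{a}$, where the required inverse exists by the propagation step (one may need to shift $\vec{a}$ slightly into the stable zone and use the constancy of $N$ on the intervening cell, which is where the hypothesis on $\xi_i(N)$ is essential). A symmetric formula produces $\Psi: N \to M$ from $g$. The composition $\Psi \circ \Phi$ then reduces to a product of $\varphi^M_{2\varepsilon}$ with inverse internal translations, and by hypothesis collapses to the identity on the Betti cells, hence to the identity globally; likewise $\Phi \circ \Psi = \id_N$.

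The main obstacle will be the bookkeeping in the propagation step, in particular verifying naturality of $\Phi$ at grades sitting on $\text{Grid}_{\mathcal{G}_M} \cup \text{Grid}_{\mathcal{G}_N}$, where several Betti cells meet and the choice of the auxiliary nearby grade is not canonical. The delicate point is that the formula for $\Phi_\vec{a}$ must be independent (up to the natural action of $P_n$) of which representative Betti grade one shifts to; this will follow from the compatibility of the interleaving maps with the internal structure of $M$ and $N$, combined with the hypothesis that the $2\varepsilon$-transitions at Betti grades are isomorphisms, but it requires a careful chase of commutative diagrams built from $f$, $g$, and the cell transitions.
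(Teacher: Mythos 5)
The paper does not actually prove this lemma; it states it as a specialisation of Lesnick's {\cite[Lemma~6.7]{lesnick_theory_2015}} and defers to that source. So there is no in-paper argument to compare your proposal against; what follows is an assessment of your reconstruction on its own terms.

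Your high-level strategy---deduce mono/epi data for the interleaving maps $f,g$ at Betti grades from $g_{\vec{a}+\varepsilon}\circ f_\vec{a}=\varphi^M_{2\varepsilon}|_\vec{a}$, then invert a suitable internal transition to turn $f$ into a degree-zero map---is the right intuition, and the pointwise computation $\Psi_\vec{a}\circ\Phi_\vec{a}=\id$ does go through wherever the relevant $\varepsilon$-transitions are invertible. However there are two genuine gaps. First, the propagation step rests on the claim that ``in the complement of $\text{Grid}_{\mathcal{G}_M}\cup\text{Grid}_{\mathcal{G}_N}$ the internal transitions of both $M$ and $N$ are isomorphisms.'' This is false as stated: an internal transition $M(\vec{a}\leq\vec{b})$ with both endpoints off the grid is an isomorphism only when $\vec{a}$ and $\vec{b}$ lie in the same cell, and nothing forces the sandwiching Betti grades $\vec{a}_\downarrow\leq\vec{a}\leq\vec{a}_\uparrow$ you invoke to be within $2\varepsilon$ of each other (for $\mathds{1}^{[0,10)^2}$ with $\vec{a}=(5,5)$ and any $\varepsilon<1$, the only sandwiching Betti grades are $(0,0)$ and $(10,10)$, and the transition between them is $\mathbb{F}\to 0$). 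So ``propagate to all of $\mathbb{R}^n$'' needs a different mechanism than the one you describe; you must split into the case where $\vec{a}$ lies within a $2\varepsilon$-corridor of a Betti grade (where the hypothesis supplies constancy) and the case where $[\vec{a},\vec{a}+\varepsilon]$ stays inside a single cell, and argue that these two cases exhaust $\mathbb{R}^n$ under the hypothesis.

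Second, and relatedly, the explicit formula $\Phi_\vec{a}=\bigl(N(\vec{a}\leq\vec{a}+\varepsilon)\bigr)^{-1}\circ f_\vec{a}$ is undefined at those $\vec{a}$ for which $N(\vec{a}\leq\vec{a}+\varepsilon)$ is not invertible, and the hypothesis does not rule these out: the $2\varepsilon$-transition $N_\vec{a}\to N_{\vec{a}+2\varepsilon}$ being an isomorphism only makes $N(\vec{a}\leq\vec{a}+\varepsilon)$ injective and $N(\vec{a}+\varepsilon\leq\vec{a}+2\varepsilon)$ surjective; it does not control $N_{\vec{a}+\varepsilon}$'s dimension or the $\varepsilon$-transition at an arbitrary grade. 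You flag the need to ``shift into the stable zone'' and to verify naturality across cell boundaries, but those are precisely the places where the argument either succeeds or fails, and leaving them as a bookkeeping remark leaves the proof incomplete. As it stands, the proposal records the correct intuition but does not yet establish the lemma; making the cell-by-cell definition of $\Phi$ precise and proving its independence of the auxiliary grade (so that naturality holds on the nose rather than up to unspecified compatibility) is the actual content of Lesnick's proof and is exactly what is missing here.
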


Combining the results of this section we attain a short proof of our main result.

\begin{proof}\textit{(\Cref{thm:LocalEquivalence})}
Suppose (seeking contradiction) that $d_0(M,N) < \kapeps$. Then by \Cref{lem:UniteGrids} there is a module $\Tilde{N}$ with  $\cup_i \xi_i(\Tilde{N}) \subset \Ima \mathcal{G}_M$, and $d_I(N,\Tilde{N})\leq 34 \kappa \epsilon$. By the triangle inequality $d_I(N,M)\leq \varepsilon + 34 \kappa \epsilon$. Since $c_M > 2 \varepsilon + 68 \kapeps $, the internal morphisms $(\Tilde{N}_\vec{a}\to \Tilde{N}_{\vec{a}+2\varepsilon + 68\kapeps})$, $(M_\vec{a}\to M_{\vec{a}+2\varepsilon + 68\kapeps})$ are isomorphisms for all $\vec{a}\in \cup_i \xi_i(\Tilde{N}) \cup \cup_i \xi_i(M) \subset  \Ima \mathcal{G}_M$. Hence \Cref{lem:LesnickLemma} implies that $d_I(M,\Tilde{N})=0$. This leads to a contradiction for all $\kappa \in [0,\frac{1}{34})$: $$\varepsilon= d_I(M,N) \leq  d_I(M,\Tilde{N}) + d_I(\Tilde{N},N) \leq 34 \kapeps < \varepsilon = d_I(M,N).$$
\end{proof}

\section{Equivalence of Intrinsic Metrics}
\label{sec:GlobalEquiv}

In this section we show that the local equivalence result in the previous section induces a global equivalence of intrinsic metrics. A consequence of the Characterisation Theorem in {\cite[Theorem~4.4]{lesnick_theory_2015}} (which characterises the presentations of $\varepsilon$-interleaved multiparameter persistences modules), is that the space of finitely presented multiparameter persistence modules equipped with the interleaving distance is a path metric space. That is to say a pair of finitely presented modules at distance $\varepsilon$ in the interleaving distance are joined by an $\varepsilon$-length path. Consequently, we deduce a global equivalence of intrinsic metrics between the induced intrinsic matching distance $\hat{d}_0$ and the interleaving distance \Cref{thm:GlobalEquivalence}. For a more thorough introduction to path metric spaces and induced intrinsic metrics see \cite{gromov_metric_1998}. 

We begin by recalling the characterisation of interleavings proved by Lesnick:

\begin{thm}[Interleaving Characterisation {\cite[Theorem~4.4]{lesnick_theory_2015}}]
Let $M,N \in \vec{Vect}^{\vec{R}^n}$ then $M,N$ are $\varepsilon$-interleaved if and only if there exist presentations:
\begin{align*}
    M \cong \langle \mathcal{X}_M, \mathcal{X}_N(-\varepsilon) | \mathcal{R}_M, \mathcal{R}_N(-\varepsilon) \rangle \\
    N \cong \langle \mathcal{X}_M(-\varepsilon), \mathcal{X}_N | \mathcal{R}_M(-\varepsilon), \mathcal{R}_N \rangle
\end{align*}
\label{thm:Characterisation}
\end{thm}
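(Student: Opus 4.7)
The proof is bidirectional; the sufficient direction is algebraic bookkeeping while the necessary direction requires extracting presentations from the interleaving data.

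For the sufficient direction ($\Leftarrow$), given presentations with the prescribed matching form, I would build explicit interleaving morphisms $f: M \Rightarrow NT_\varepsilon$ and $g: N \Rightarrow MT_\varepsilon$ by identifying the ``shared'' pieces of the two presentations. Define $f$ on generators of $M$ as follows: for $b \in \mathcal{X}_M$ at grade $\vec{a}$, set $f(b)$ to be the matching generator of $\mathcal{X}_M(-\varepsilon)$ living inside $N$ at grade $\vec{a}+\varepsilon\vec{1}$; for $n(-\varepsilon) \in \mathcal{X}_N(-\varepsilon)$ at grade $\vec{a}$ in $M$ (so $n \in \mathcal{X}_N$ sits at grade $\vec{a}-\varepsilon\vec{1}$ in $N$), set $f(n(-\varepsilon)) = \vec{x}^{2\varepsilon\vec{1}} \cdot n$ in $N_{\vec{a}+\varepsilon\vec{1}}$. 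The symmetric definition yields $g$. Well-definedness on relations is automatic because the sets $\mathcal{R}_M, \mathcal{R}_N(-\varepsilon)$ appearing in $M$'s presentation reappear as $\mathcal{R}_M(-\varepsilon), \mathcal{R}_N$ in $N$'s presentation once the $\varepsilon$-shift is absorbed. The coherence conditions are then verified directly on each generator type: in every case the composition $T_\varepsilon^\ast(g)\circ f$ acts by $\vec{x}^{2\varepsilon\vec{1}}$, and symmetrically for $T_\varepsilon^\ast(f) \circ g$.

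For the necessary direction ($\Rightarrow$), given an $\varepsilon$-interleaving $(f, g)$, I would produce the desired presentations by enlarging any initial presentations of $M$ and $N$. The interleaving is naturally viewed as a single functor $\Phi$ on the enriched poset $\vec{R}^n \sqcup \vec{R}^n$ whose two copies are connected by cross-morphisms at shift $\varepsilon\vec{1}$ realizing $f$ and $g$; $\Phi$ restricts to $M, N$ on the two copies. A presentation of $\Phi$ projects to presentations of $M$ and $N$ with the desired symmetric structure: generators sitting on the $N$-copy, when pulled back to the $M$-copy via $g$, land at grade shifts encoded by the relabelling $\mathcal{X}_N \mapsto \mathcal{X}_N(-\varepsilon)$, and relations transport analogously.

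The main obstacle is arranging that $\mathcal{R}_M \cup \mathcal{R}_N(-\varepsilon)$ alone suffice to cut out $M$, rather than yielding a strictly larger module: the coupling relations of the form $gf(b) - \vec{x}^{2\varepsilon\vec{1}}\cdot b$, which are forced by the coherence identity $T_\varepsilon^\ast(g) \circ f = \varphi_{2\varepsilon}^M$, must be absorbed into the (necessarily non-minimal) choices of $\mathcal{R}_M$ and, symmetrically, $\mathcal{R}_N$. This matching-of-presentations construction is precisely what is carried out in \cite[Theorem~4.4]{lesnick_theory_2015}, where Lesnick inductively enlarges the generator and relation sets of each side so that the two lists $\mathcal{R}_M, \mathcal{R}_N(-\varepsilon)$ on the $M$-side are in bijective correspondence with $\mathcal{R}_M(-\varepsilon), \mathcal{R}_N$ on the $N$-side. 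I would invoke that construction for the technical bookkeeping.
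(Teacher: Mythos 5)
The paper does not prove this theorem; it is stated as a citation of \cite[Theorem~4.4]{lesnick_theory_2015} and used as a black box in \Cref{cor:PathMetricSpace} and \Cref{thm:GlobalEquivalence}, so there is no internal proof to compare against. Your sketch of the $(\Leftarrow)$ direction is essentially correct: sending $b\in\mathcal{X}_M$ at grade $\vec{a}$ to its shifted copy at grade $\vec{a}+\varepsilon\vec{1}$ in $N$, and sending $n(-\varepsilon)$ at grade $\vec{a}$ to $\vec{x}^{2\varepsilon\vec{1}}\cdot n$, produces morphisms whose two-sided compositions act by $\vec{x}^{2\varepsilon\vec{1}}$ on every generator. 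Your ``well-definedness on relations is automatic'' is slightly glib but defensible: if one unwinds the grade bookkeeping, $f$ carries $r\in\mathcal{R}_M$ (grade $\vec{s}$) to the homogeneous element of $\text{Free}[\mathcal{X}_M(-\varepsilon)\sqcup\mathcal{X}_N]$ at grade $\vec{s}+\varepsilon\vec{1}$ with identical coefficients, which is exactly $r(-\varepsilon)\in\mathcal{R}_M(-\varepsilon)$, and carries $r(-\varepsilon)\in\mathcal{R}_N(-\varepsilon)$ to $\vec{x}^{2\varepsilon\vec{1}}\cdot r$ with $r\in\mathcal{R}_N$; both vanish in $N$. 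This is precisely the observation recorded in the paper's \Cref{rmk:welldefined}.

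For the $(\Rightarrow)$ direction, your framing of the interleaving as a module over $\vec{R}^n\sqcup\vec{R}^n$ with cross-morphisms (the ``interleaving category'') is a legitimate repackaging of the data, but it does not by itself establish that a presentation of the combined object restricts to presentations of $M$ and $N$ with the exact matching form demanded by the theorem. You correctly identify the coupling relations $gf(b)-\vec{x}^{2\varepsilon\vec{1}}\cdot b$ as the obstruction to be absorbed, and you defer to Lesnick's construction for that bookkeeping. Since the paper itself defers to Lesnick for the entire statement, this is acceptable in context, but be aware that your sketch of the hard direction is a reinterpretation, not an independent argument, and as written it does not constitute a proof of $(\Rightarrow)$.
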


\begin{Remark}
The presentations in the result of \Cref{thm:Characterisation} are constructed using the data of the interleaving morphisms. We note that in this construction each relation in $\mathcal{R}_M$ is a homogeneous element of the free module
$\text{Free}[\mathcal{X}_M \sqcup \mathcal{X}_N(-\varepsilon)]$. Similarly, each relation in $\mathcal{R}_N$ is a homogeneous element of the free module
$\text{Free}[\mathcal{X}_M(-\varepsilon) \sqcup \mathcal{X}_N]$.
This means that $\langle \mathcal{X}_M(-t\varepsilon), \mathcal{X}_N((t-1)\varepsilon) | \mathcal{R}_M(-t\varepsilon), \mathcal{R}_N((t-1)\varepsilon)) \rangle$ 
is a well-defined presentation for all $t\in[0,1]$.
\label{rmk:welldefined}
\end{Remark}

A straight forward consequence of the characterisation of interleavings is the following result:

\begin{cor}[$(\fpmultimodules,d_I)$ is a Path Metric Space]
The extended metric space of finitely presented multiparameter persistence modules equipped with the interleaving distance, $(\fpmultimodules,d_I)$, is a path metric space. 
\label{cor:PathMetricSpace}
\end{cor}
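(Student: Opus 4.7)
The plan is to exhibit, for every pair $M, N \in \fpmultimodules$ with $d_I(M, N) < \infty$ and every $\delta > 0$, an explicit path from $M$ to $N$ in $(\fpmultimodules, d_I)$ of length at most $d_I(M, N) + \delta$. Since the triangle inequality forces every path from $M$ to $N$ to have length at least $d_I(M, N)$, producing such paths pins the induced length metric exactly to $d_I$, which is the definition of a path metric space. (The case $d_I(M,N) = \infty$ is trivial in the extended-metric sense.)

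Setting $\varepsilon := d_I(M, N) + \delta$, the modules $M$ and $N$ are $\varepsilon$-interleaved, so the forward direction of \Cref{thm:Characterisation} yields presentations $M \cong \langle \mathcal{X}_M, \mathcal{X}_N(-\varepsilon) | \mathcal{R}_M, \mathcal{R}_N(-\varepsilon)\rangle$ and $N \cong \langle \mathcal{X}_M(-\varepsilon), \mathcal{X}_N | \mathcal{R}_M(-\varepsilon), \mathcal{R}_N\rangle$. For $t \in [0,1]$ I then set
$$
M_t := \langle \mathcal{X}_M(-t\varepsilon), \mathcal{X}_N((t-1)\varepsilon) | \mathcal{R}_M(-t\varepsilon), \mathcal{R}_N((t-1)\varepsilon)\rangle,
$$
which \Cref{rmk:welldefined} certifies as a well-defined finitely presented module, with $M_0 \cong M$ and $M_1 \cong N$; this is the candidate path.

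The core step is the Lipschitz bound $d_I(M_s, M_t) \leq (t-s)\varepsilon$ for $0 \leq s \leq t \leq 1$, which I would prove by invoking the \emph{converse} direction of \Cref{thm:Characterisation} with interleaving parameter $\varepsilon' := (t-s)\varepsilon$. Introducing auxiliary graded sets $\mathcal{X} := \mathcal{X}_M(-s\varepsilon)$, $\mathcal{Y} := \mathcal{X}_N((t-1)\varepsilon)$, $\mathcal{R} := \mathcal{R}_M(-s\varepsilon)$, $\mathcal{S} := \mathcal{R}_N((t-1)\varepsilon)$, the composition identity $(-)(\alpha)(\beta) = (-)(\alpha + \beta)$ on graded sets gives $\mathcal{X}(-\varepsilon') = \mathcal{X}_M(-t\varepsilon)$ and $\mathcal{Y}(-\varepsilon') = \mathcal{X}_N((s-1)\varepsilon)$, with the analogous identities for the relations. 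Consequently the presentations of $M_s$ and $M_t$ rewrite as $\langle \mathcal{X}, \mathcal{Y}(-\varepsilon') | \mathcal{R}, \mathcal{S}(-\varepsilon')\rangle$ and $\langle \mathcal{X}(-\varepsilon'), \mathcal{Y} | \mathcal{R}(-\varepsilon'), \mathcal{S}\rangle$ respectively, matching the hypotheses of \Cref{thm:Characterisation} at parameter $\varepsilon'$; thus $M_s$ and $M_t$ are $\varepsilon'$-interleaved. Telescoping across any partition of $[0,1]$ then bounds the length of $t \mapsto M_t$ by $\varepsilon = d_I(M,N) + \delta$, and sending $\delta \to 0$ closes the argument. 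I expect the only real obstacle to be the grading bookkeeping needed to identify these presentations in the canonical form demanded by \Cref{thm:Characterisation}, but this reduces to the shift arithmetic above together with the well-definedness guaranteed by \Cref{rmk:welldefined}.
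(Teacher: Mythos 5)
Your proof is correct and uses the same core construction as the paper: take the presentations guaranteed by \Cref{thm:Characterisation}, interpolate the grading shifts linearly to get a path $t \mapsto M_t$, and establish the Lipschitz estimate $d_I(M_s,M_t) \leq |s-t|\varepsilon$ by re-parametrising the shifted graded sets and invoking the converse direction of \Cref{thm:Characterisation}. The one place you diverge from the paper is the treatment of the infimum: the paper sets $\varepsilon = d_I(M,N)$ directly, citing the fact ({\cite[Theorem~6.1]{lesnick_theory_2015}}) that for finitely presented modules the interleaving distance is \emph{realised}, i.e.\ a $d_I(M,N)$-interleaving actually exists; you instead work with $\varepsilon = d_I(M,N) + \delta$ and let $\delta \to 0$. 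Your route has the modest advantage of not needing the realisation theorem at all, making the argument self-contained modulo \Cref{thm:Characterisation}; the paper's route is slightly stronger in that it exhibits an actual geodesic of length exactly $d_I(M,N)$, whereas your limiting argument only shows the infimum over path lengths equals $d_I(M,N)$. For the statement as written (path metric space) either suffices, though the paper later relies implicitly on distance-realising paths existing (e.g.\ the clause ``there exists a distance realising path'' following \Cref{cor:PathMetricSpace}), so the realisation step is not entirely dispensable in context.
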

\begin{proof}
Given a pair of finitely presented modules $M,N$ with $d_I(M,N)= \varepsilon$, since the modules are finitely presented, the interleaving distance is realised and so they are $\varepsilon$-interleaved {\cite[Theorem~6.1]{lesnick_theory_2015}}. \Cref{thm:Characterisation} implies that there exist presentations $M \cong \langle \mathcal{X}_M, \mathcal{X}_N(-\varepsilon) | \mathcal{R}_M, \mathcal{R}_N(-\varepsilon) \rangle$, $N \cong \langle \mathcal{X}_M(-\varepsilon), \mathcal{X}_N | \mathcal{R}_M(-\varepsilon), \mathcal{R}_N \rangle$.
Consider the path $\gamma : [0,1] \to \fpmultimodules$ defined by:
$$
\gamma(t) = 
\langle \mathcal{X}_M(-t\varepsilon), \mathcal{X}_N((t-1)\varepsilon) | \mathcal{R}_M(-t\varepsilon), \mathcal{R}_N(t-1)\varepsilon) \rangle 
$$

The path $\gamma$ is well-defined by \Cref{rmk:welldefined}. Let $t<s$ and  $\mathcal{X}_t = \mathcal{X}_M(-t\varepsilon), 
\mathcal{X}_s = \mathcal{X}_N((s-1)\varepsilon), 
\mathcal{R}_t = \mathcal{R}_M(-t\varepsilon),$ and $\mathcal{R}_s = \mathcal{R}_N((s-1)\varepsilon)$.
Observe that:
$$
\gamma(t) = 
\langle \mathcal{X}_t, \mathcal{X}_s((t-s)\varepsilon) | \mathcal{R}_t, \mathcal{R}_s((t-s)\varepsilon) \rangle 
\hspace{1cm}
\gamma(s) = 
\langle \mathcal{X}_t((t-s)\varepsilon), \mathcal{X}_s | \mathcal{R}_t((t-s)\varepsilon), \mathcal{R}_s \rangle 
$$
Hence using \Cref{thm:Characterisation} we see that $d_I(\gamma(t),\gamma(s)) \leq |s-t|\varepsilon$.
\end{proof}


In a metric space $(X,d)$ one can define the length of a continuous path $\gamma$:

\begin{defn}[Length of a Path \cite{gromov_metric_1998}]
Define the \textcolor{Maroon}{length of a path} $\gamma : [0,1] \to X$  continuous with respect to the metric $d$ to be given by:

$$\ell_d(\gamma) = \sup_{\substack{0=t_0 < t_1 < ...< t_n = 1 \\ n\in \mathbb{N}}}\sum_{i=1}^n d(\gamma(t_{i-1}),\gamma(t_{i}))$$
\end{defn}

From the definition of the length of a path we can construct intrinsic metrics on $(\fpmultimodules,d_I)$.

\begin{defn}[Intrinsic Metric]
\label{defn:IntrinsicMetric}
Given a metric $d$ on the space of multiparameter modules we define an associated \textcolor{Maroon}{intrinsic metric} $\hat{d}$ with respect to a collection of admissable paths $\mathcal{C}$ (where $\mathcal{C}$ contains all constant paths and is closed under concatenations of paths).

\begin{equation}
    \hat{d}(M,N) = \inf_{\substack{\gamma \in \mathcal{C}\\ \gamma(0)=M \\ \gamma(1)=N}} \ell_d(\gamma)
\end{equation}
\end{defn}

Since the space of finitely presented multiparameter modules equipped with the interleaving distance is a path metric space (\Cref{cor:PathMetricSpace}) there exists a distance realising path between each pair of modules and so $\hat{d}_I = d_I$.
Thus the local result of \Cref{thm:LocalEquivalence} extends to a global result about intrinsic metrics.

\begin{thm}[Equivalence of Intrinsic Metrics]
Let $\hat{d}_0$ denote the intrinsic matching distance with respect to the collection of admissable paths $\mathcal{C} = \{\gamma : [0,1] \to (\fpmultimodules,d_I) : \gamma \text{ is } d_I\text{-continuous}\}$. The intrinsic metrics $\hat{d}_0$ and $d_I$ are globally bi-Lipschitz equivalent so that for all $M,N\in \fpmultimodules$:
$$  \frac{1}{34}d_I(M,N) \leq \hat{d}_0(M,N) \leq d_I(M,N)$$
\label{thm:GlobalEquivalence}
\end{thm}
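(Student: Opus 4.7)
The plan is to prove the two inequalities separately. The upper bound $\hat{d}_0(M,N) \leq d_I(M,N)$ will follow formally: since $d_0 \leq d_I$ holds pointwise on pairs of modules, $\ell_{d_0}(\gamma) \leq \ell_{d_I}(\gamma)$ for every $d_I$-continuous path $\gamma$, hence $\hat{d}_0(M,N) \leq \hat{d}_I(M,N)$, and by \Cref{cor:PathMetricSpace} we have $\hat{d}_I = d_I$.

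For the lower bound, I will fix a $d_I$-continuous path $\gamma : [0,1] \to \fpmultimodules$ with $\gamma(0)=M$ and $\gamma(1)=N$, and exhibit a partition of $[0,1]$ along which the $d_0$-distances total at least $\tfrac{1}{34}d_I(M,N)$. For each $t\in[0,1]$, set $\rho_t = c_{\gamma(t)}/4 > 0$, the local-equivalence radius produced by \Cref{thm:LocalEquivalence} (with $\rho_t=\infty$ when the Betti grid is trivial). By $d_I$-continuity of $\gamma$, I can choose $\delta_t > 0$ so that $\gamma((t-\delta_t, t+\delta_t)) \subset B_{d_I}(\gamma(t), \rho_t)$, the ball on which local equivalence anchored at $\gamma(t)$ is valid.

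The open intervals $\{(t-\delta_t/2,\; t+\delta_t/2)\}_{t\in[0,1]}$ cover the compact set $[0,1]$. I extract a finite subcover and enumerate its centres as $0 = t_1 < t_2 < \cdots < t_K = 1$ (including the endpoints as anchors if necessary). Overlap of consecutive half-intervals forces $t_{k+1} - t_k < \tfrac{1}{2}(\delta_{t_k}+\delta_{t_{k+1}}) \leq \max(\delta_{t_k}, \delta_{t_{k+1}})$, so whichever of $t_k, t_{k+1}$ has the larger $\delta$ has the other inside its local-equivalence ball; applying \Cref{thm:LocalEquivalence} with that endpoint as anchor yields $d_0(\gamma(t_k),\gamma(t_{k+1})) \geq \tfrac{1}{34}d_I(\gamma(t_k),\gamma(t_{k+1}))$. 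Summing and applying the triangle inequality for $d_I$,
$$ \ell_{d_0}(\gamma) \;\geq\; \sum_{k=1}^{K-1} d_0(\gamma(t_k), \gamma(t_{k+1})) \;\geq\; \frac{1}{34} \sum_{k=1}^{K-1} d_I(\gamma(t_k), \gamma(t_{k+1})) \;\geq\; \frac{1}{34}\, d_I(M,N), $$
and infimising over admissible paths $\gamma$ delivers $\hat{d}_0(M,N) \geq \tfrac{1}{34}d_I(M,N)$.

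The main obstacle is the \emph{anchored} nature of local equivalence: the inequality $\tfrac{1}{34}d_I \leq d_0$ is only guaranteed when one module is the basepoint of a ball whose radius $c_P/4$ depends on $P$'s controlling constant, and $P \mapsto c_P$ is not continuous along a $d_I$-path. My argument sidesteps any regularity of $P\mapsto c_P$ by choosing the partition points themselves to be anchor centres, exploiting the overlap of the finite subcover to guarantee that for each consecutive pair the applicable local ball is anchored at one of its own endpoints rather than at some external centre.
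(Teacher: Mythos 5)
Your proposal is correct and takes essentially the same compactness argument as the paper: cover $[0,1]$ by preimages of the local-equivalence balls from \Cref{thm:LocalEquivalence}, extract a finite subcover, telescope the anchored lower bound along the induced partition, and use $\hat{d}_I = d_I$ (\Cref{cor:PathMetricSpace}) together with $d_0 \leq d_I$ for the upper bound. The only difference is bookkeeping for the anchoring issue: the paper alternates cover centres with points chosen in the overlaps of consecutive cover intervals, whereas you use half-radius intervals and anchor each consecutive pair at whichever endpoint has the larger $\delta$ --- both resolve the same difficulty in the same spirit.
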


\begin{proof}
The upper bound $\hat{d_0}\leq d_I$ follows from $d_0 \leq d_I$. Indeed for any path $\gamma$ we have that between modules $M$ and $N$

$$\ell_{d_0}(\gamma) = \sup_{\substack{0=t_0 < t_1 < ...< t_n = 1 \\ n\in \mathbb{N}}}\sum_{i=1}^n d_0(\gamma(t_{i-1}),\gamma(t_{i})) \leq \sup_{\substack{0=t_0 < t_1 < ...< t_n = 1 \\ n\in \mathbb{N}}}\sum_{i=1}^n d_I(\gamma(t_{i-1}),\gamma(t_{i})) = \ell_{d_I}(\gamma). $$
$\hat{d_0} = \inf_\gamma \ell_{d_0}(\gamma)$ and \Cref{cor:PathMetricSpace} implies that $d_I =  \inf_\gamma \ell_{d_I}(\gamma)$ thus $\hat{d_0}\leq d_I$.

For the lower bound we will show the length of any path between $M$ and $N$ is bounded below by $\frac{1}{34} d_I(M,N)$.
Let $\gamma \in \mathcal{C}$ with $\gamma(0)=M$ and $\gamma(1)=N$. For every $t\in [0,1]$ let $I_t \subset \gamma^{-1} B(\gamma(t),\frac{c_{\gamma(t)}}{4})$, be the largest interval containing $t$ in the preimage of the radius $\frac{c_{\gamma(t)}}{{4}}$ ball about the module $\gamma(t)$ in $\fpmultimodules$. By compactness of $[0,1]$ there is a minimal finite subcover $\{I_{t_1} , ...,I_{t_{2k+1}},..., I_{t_{2n+1}}\}$. Let $t_{2k} \in I_{t_{2k-1}}\cap I_{t_{2k+1}}$. Then by the local result \Cref{thm:LocalEquivalence} $d_0(\gamma(t_{2k}),\gamma(t_{2k\pm1})) \geq \frac{1}{34} d_I(\gamma(t_{2k}),\gamma(t_{2k\pm1}))$. Hence we observe that $ \ell(\gamma) \geq \frac{1}{34} d_I(M,N)$.
\end{proof}

\section{Application to Interlevel Set Persistence and Reeb Graphs}
\label{sec:InterlevelReeb}

In this section we outline an application of \Cref{thm:LocalEquivalence} and \Cref{thm:GlobalEquivalence} to interlevel set persistence modules \cite{carlsson_zigzag_2009, botnan_algebraic_2018, hutchison_robustness_2010} and Reeb Graphs. This section is intended to illustrate the generality of \Cref{thm:LocalEquivalence} and its applicability to data structures which arise in topological data analysis, even those which a priori do not have the structure of finitely presented multiparameter persistence modules.

Reeb graphs are a commonly studied object in data science and can be viewed as a particular instance of interlevel set persistence modules. Metrics on Reeb Graphs have been studied extensively \cite{carriere_local_2017, de_silva_categorified_2016, bauer_strong_2015, bauer_measuring_2014} and there is a trade off between discriminating power and computability of these metrics.
Theoretically sound but computably infeasible metrics exist for Reeb Graphs: such as the functional distortion distance, $d_\text{FD}$ and the Reeb Graph interleaving distance $d_I^\textsf{Reeb}$. It is shown in \cite{bauer_strong_2015} that $d_I^\textsf{Reeb}$ and $d_\text{FD}$ are bi-Lipschitz equivalent.
Computable, yet incomplete pseudo-metrics on Reeb Graphs include the bottleneck distance $d_B$ \cite{bauer_measuring_2014}. A comparison of the bottleneck distance and functional distortion distance is studied in \cite{carriere_local_2017}, and a global equivalence of intrinsic metrics is established.

We can associate a $2$-parameter persistence modules to an interlevel set persistence module. However, the 2-parameter persistence module associated to a non-trivial interlevel set persistence module is not finitely presented. Naively embedding interlevel set persistence modules into $\mathbb{R}^2$-indexed persistence modules gives rise to 2-parameter persistence modules with an uncountable number of generators. However, using the block decomposition of interlevel set persistence modules \cite{botnan_algebraic_2018, cochoy_decomposition_2020, bjerkevik_computing_2019} we may extend the 2-parameter persistence modules interlevel set persistence modules so that they are amenable to the arguments in \Cref{sec:LocalEquivalence} which apply to finitely presented multiparameter persistence modules.


\begin{defn}[Interlevel Set Persistence Module]
An \textcolor{Maroon}{interlevel set persistence module} is an element of the functor category $\vec{vect}^{\textbf{Int}(\mathbb{R})}$, where $\text{Int}(\mathbb{R})$ denotes the poset of non-empty open intervals in $\mathbb{R}$.
\end{defn}

\begin{ex}[Interlevel Set Persistence Modules]
Interlevel set persistence modules naturally arise from real valued functions on a topological space. Let $f:X\to \mathbb{R}$ and let $H$ denote a homology functor with coefficients in a field. The functor $(a,b)\mapsto H(f^{-1}((a,b)))$ is an interlevel set persistence module.
\label{ex:IntMod}
\end{ex}

One can define an interleaving distance for interlevel set persistence modules analogously to the interleaving distance for multiparameter modules \Cref{def:InterleavingDistance}. In this setting the translation endofunctors $T_\varepsilon: {\textbf{Int}(\mathbb{R})} \to {\textbf{Int}(\mathbb{R})}$ augment the intervals by $\varepsilon$: $T_\varepsilon((b,d))= (b-\varepsilon,d+\varepsilon)$. It is straight forward to show that interlevel set persistence modules isometrically embed as 2-parameter persistence modules.

\begin{lem}[Interlevel Set Persistence Module Embedding \cite{botnan_algebraic_2018}]
Let $\mathbb{U}$ denote the subposet $\{x_2 \geq -x_1 \}\subset \mathbb{R}^2$. The space of interlevel set persistence modules $\vec{vect}^{\textbf{Int}(\mathbb{R})}$ equipped with the interleaving distance embeds isometrically into $\vec{vect}^\mathbf{U}$, using the identification $\mathbf{U} \cong \mathbb{R}^\text{op} \times \mathbb{R}_\geq \cong \text{Int}(\mathbb{R})$.
\end{lem}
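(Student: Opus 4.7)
The plan is to exhibit an explicit order-isomorphism $\Phi : \mathbf{Int}(\mathbb{R}) \xrightarrow{\sim} \mathbf{U}$, show that $\Phi$ conjugates the translation endofunctor $T_\varepsilon^{\mathbf{Int}}$ on intervals with the standard translation $T_\varepsilon$ inherited from $\mathbb{R}^2$, and conclude that pullback along $\Phi$ induces an isometric bijection $\vec{vect}^{\mathbf{Int}(\mathbb{R})} \to \vec{vect}^{\mathbf{U}}$ for the two interleaving distances. The identification $\mathbf{U} \cong \mathbb{R}^{\text{op}} \times \mathbb{R}_\geq \cong \mathbf{Int}(\mathbb{R})$ suggested in the statement is realised explicitly by
$$\Phi : (b,d) \longmapsto (-b,\,d),$$
whose image lies in $\{(x_1,x_2) : x_2 \geq -x_1\} = \mathbf{U}$ since $b < d$ forces $-b + d > 0$.

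First I would check that $\Phi$ is a poset isomorphism. Unwinding definitions, $(b,d) \subseteq (b',d')$ in $\mathbf{Int}(\mathbb{R})$ means $b' \leq b$ and $d \leq d'$; under $\Phi$ this becomes $-b \leq -b'$ and $d \leq d'$, i.e.\ $\Phi(b,d) \leq \Phi(b',d')$ coordinatewise in $\mathbf{U}$. The inverse map $(x_1,x_2) \mapsto (-x_1,x_2)$ is order-preserving by the same computation, so pullback $\Phi^\ast : \vec{vect}^{\mathbf{U}} \to \vec{vect}^{\mathbf{Int}(\mathbb{R})}$ is an equivalence (in fact an isomorphism) of functor categories.

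Second, and this is the crucial step for the isometry, I would verify that the two notions of translation match. The interlevel translation is $T_\varepsilon^{\mathbf{Int}}(b,d) = (b-\varepsilon, d+\varepsilon)$, whereas the ambient translation on $\mathbb{R}^2$ (restricted to $\mathbf{U}$, which is clearly invariant under $T_\varepsilon$) sends $(x_1,x_2) \mapsto (x_1+\varepsilon, x_2+\varepsilon)$. A direct computation gives
$$\Phi \circ T_\varepsilon^{\mathbf{Int}}(b,d) = \Phi(b-\varepsilon,d+\varepsilon) = (-b+\varepsilon,d+\varepsilon) = T_\varepsilon \circ \Phi(b,d),$$
so $\Phi T_\varepsilon^{\mathbf{Int}} = T_\varepsilon \Phi$ as functors.

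Third, using this compatibility I would unpack the interleaving condition. Given interlevel set modules $F,G \in \vec{vect}^{\mathbf{Int}(\mathbb{R})}$, a pair of natural transformations $f : F \Rightarrow G T_\varepsilon^{\mathbf{Int}}$, $g : G \Rightarrow F T_\varepsilon^{\mathbf{Int}}$ satisfying the interleaving coherence at each interval is, under the identification along $\Phi^{-1}$, precisely a pair of natural transformations $\tilde{f} : \tilde{F} \Rightarrow \tilde{G} T_\varepsilon$, $\tilde{g} : \tilde{G} \Rightarrow \tilde{F} T_\varepsilon$ of the associated $\mathbf{U}$-modules $\tilde{F} = F\circ \Phi^{-1}$, $\tilde{G} = G \circ \Phi^{-1}$ satisfying the same coherence relations; this is immediate from the intertwining $\Phi T_\varepsilon^{\mathbf{Int}} = T_\varepsilon \Phi$ and the fact that natural transformations are in bijection under any equivalence of indexing categories. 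Hence $\{\varepsilon \geq 0 : F,G \text{ are } \varepsilon\text{-interleaved}\} = \{\varepsilon \geq 0 : \tilde{F},\tilde{G} \text{ are } \varepsilon\text{-interleaved}\}$, and taking infima yields $d_I(F,G) = d_I(\tilde{F},\tilde{G})$, establishing the isometric embedding. There is no real obstacle here: the content is only to pin down the signs in $\Phi$ and the resulting compatibility of translations, after which the isometry is formal.
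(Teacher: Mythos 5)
The paper does not actually prove this lemma: it is imported verbatim from Botnan--Lesnick, so there is no "paper's proof" to compare against and your proposal is filling in an omitted argument.

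The computational heart of your argument — that $\Phi(b,d)=(-b,d)$ intertwines the interval translation $T_\varepsilon^{\mathbf{Int}}(b,d)=(b-\varepsilon,d+\varepsilon)$ with the diagonal translation $T_\varepsilon$ on $\mathbb{R}^2$ — is correct and is the key point. However, your category-theoretic framing has a genuine gap. Because $\mathbf{Int}(\mathbb{R})$ consists of \emph{non-empty} open intervals, $\Phi$ has image $\{(x_1,x_2): x_2 > -x_1\}$, the \emph{open} half-plane, not all of $\mathbf{U}=\{x_2\geq -x_1\}$. So $\Phi$ is not a poset isomorphism onto $\mathbf{U}$; the proposed "inverse" $(x_1,x_2)\mapsto(-x_1,x_2)$ sends the anti-diagonal to degenerate intervals $(a,a)\notin\mathbf{Int}(\mathbb{R})$. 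Consequently $\Phi^\ast:\vec{vect}^{\mathbf{U}}\to\vec{vect}^{\mathbf{Int}(\mathbb{R})}$ is a restriction, not an equivalence, and in any case it goes in the wrong direction — the lemma asks for a map \emph{into} $\vec{vect}^{\mathbf{U}}$, whereas pullback along $\Phi$ produces a map \emph{out of} $\vec{vect}^{\mathbf{U}}$. This is exactly why the lemma says "embeds" rather than "is isomorphic."

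To repair this you should: (i) note that $\Phi$ is a poset isomorphism onto the open half-plane $\mathbf{U}^\circ := \{x_2 > -x_1\}$, giving an isomorphism $\vec{vect}^{\mathbf{Int}(\mathbb{R})}\cong\vec{vect}^{\mathbf{U}^\circ}$ compatible with $T_\varepsilon$; and (ii) compose with the extension-by-zero functor $\vec{vect}^{\mathbf{U}^\circ}\to\vec{vect}^{\mathbf{U}}$ (assigning $0$ at every point of the anti-diagonal, with all structure maps out of those points equal to zero). One then checks this extension is isometric: for $\varepsilon>0$ the translation $T_\varepsilon$ carries the anti-diagonal into $\mathbf{U}^\circ$, so any $\varepsilon$-interleaving over $\mathbf{U}^\circ$ extends by zero maps to an $\varepsilon$-interleaving over $\mathbf{U}$, and conversely every $\varepsilon$-interleaving over $\mathbf{U}$ restricts to one over $\mathbf{U}^\circ$. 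This check is short, but it is not the tautology your write-up treats it as, and the boundary behaviour is precisely what distinguishes an embedding from an isomorphism.
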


If we hope to associate a finitely presented module to an interlevel set persistence module we need to control the behaviour of interlevel set persistence modules. \Cref{defn:findet} provides the necessary criteria to control interlevel set persistence modules. 

\begin{defn}[Finitely Determined]
A module $ M \in \pfdmultimodules$ is \textcolor{Maroon}{finitely determined} if there exists a grid function 
$\mathcal{G}: [k_1] \times [k_2] \to \mathbb{R}^2$ such that for all 
$\vec{a}\leq \vec{b}$ with $\vec{a},\vec{b}\in (\mathcal{G}(i),\mathcal{G}(i+1))\times (\mathcal{G}(j),\mathcal{G}(j+1))=: \textsf{Cell}_{i,j}$ the internal morphism $M(\vec{a}\leq \vec{b})$ is an isomorphism, and for all $\vec{a} \not\geq \mathcal{G}((1,1)), M(\vec{a}) = 0$. For a subset $A\subset \mathbb{R}^2$ we shall say a module $M \in \vec{vect}^\vec{A}$ is finitely determined if there exists a grid function $\mathcal{G}: [k_1] \times [k_2] \to \mathbb{R}^2$ such that for all $\vec{a}\leq \vec{b}$ with $\vec{a},\vec{b}\in (\mathcal{G}(i),\mathcal{G}(i+1))\times (\mathcal{G}(j),\mathcal{G}(j+1)) \cap A$ the internal morphism $M(\vec{a}\leq \vec{b})$ is an isomorphism.
\label{defn:findet}
\end{defn}

Following \Cref{defn:findet} we will say that an interlevel set persistence modules $M \in \vec{vect}^{\textbf{Int}(\mathbb{R})}$ module is finitely determined if the associated module $M \in \vec{vect}^\mathbf{U}$ is finitely determined. We will denote the collection of finitely determined interlevel set persistence modules by  $\vec{vect}_\text{fin}^{\textbf{Int}(\mathbb{R})}$. We believe that this constructibility criterion is not too restrictive. For example, the interlevel set persistence module associated to a scalar function $f:X\to \mathbb{R}$ will be finitely determined if $X$ is a compact manifold and $f$ is Morse (\Cref{ex:IntMod}); alternatively if $X$ is a compact semi-algebraic subset of $\mathbb{R}^n$ and $f$ is a projection onto one of the coordinate values \cite{de_silva_categorified_2016}.

\begin{defn}[Block Decomposable \cite{botnan_algebraic_2018}]
Let us define the following notation for intervals of $\mathbf{U}$ which we shall call \textcolor{Maroon}{$\mathbf{U}$-blocks}:
\begin{align*}
    (a,b)_\text{BL} &= \{(x,y)\in \mathbf{U} : x\in (-b,-a), y \in (a,b)\}\\
    [a,b)_\text{BL} &= \{(x,y)\in \mathbf{U} : y \in [a,b)\}\\
    (a,b]_\text{BL} &= \{(x,y)\in \mathbf{U} : x \in [-b,a)\}\\
    [a,b]_\text{BL} &= \{(x,y)\in \mathbf{U} : x \in [-b,\infty), y\in [a,\infty)\}
\end{align*}

We say a module $M \in  \vec{vect}^{\mathbf{R}^2}$ is \textcolor{Maroon}{$\mathbf{U}$-block decomposable} is it is interval decomposable and each of the intervals in the decomposition is a $\mathbf{U}$-block.

\end{defn}

We want to work in the subclass of finitely presented persistence modules. To this end, we define the extension of $\mathbf{U}$-blocks, which takes each $\mathbf{U}$-block to a finitely presented interval of $\mathbb{R}^2$.

\begin{defn}[$\mathbb{U}$-Block Extension]
We define the following extensions of $\mathbf{U}$-blocks to intervals of $\mathbf{R}^2$:
\begin{align*}
    \overline{(a,b)}_\text{BL} &= \{(x,y)\in \mathbf{R}^2 : x \in [-b,-a), y \in [a,b)\}\\
    \overline{[a,b)}_\text{BL} &= \{(x,y)\in \mathbf{R}^2 : x \in [-b, \infty ), y \in [a,b)\}\\
    \overline{(a,b]}_\text{BL} &= \{(x,y)\in \mathbf{R}^2 : x \in [-b,a), y\in [a,\infty )\}\\
    \overline{[a,b]}_\text{BL} &= \{(x,y)\in \mathbf{R}^2 : x \in [-b,\infty ), y\in [a,\infty )\}
\end{align*}

\end{defn}

Using this extension we can assign a finitely presented module  $\overline{M} \in  \vec{vect}_\text{fin}^{\mathbf{R}^2}$ to every $\mathbf{U}$-block decomposable module $M \in  \vec{vect}^{\mathbf{R}^2}$. Explicitly if $M \cong \bigoplus_j \mathds{1}^{I_j}$ (with $I_j$ all $\mathbf{U}$-blocks) we take $\overline{M}\cong \bigoplus_j \mathds{1}^{\overline{I_j}}$.

The interleaving distance between $\mathbf{U}$-block decomposable modules can be computed by matching the blocks in their decomposition \cite{bjerkevik_stability_2016}. More precisely, given $\mathbf{U}$-block decomposable modules $M \cong \bigoplus_j \mathds{1}^{I_j}$ and $N\cong \bigoplus_k \mathds{1}^{J_k}$ if $d_I(M,N)= \varepsilon$ then there is a matching between the blocks $\{I_j\}$ and $\{J_k\}$ such that if $I_j$ is matched to $J_k$ then $I_j$ and $J_k$ are of the same type, $d_I(\mathds{1}^{I_j},\mathds{1}^{J_k}) \leq \varepsilon$ and any unmatched block is $\varepsilon$-interleaved with the zero module \cite{bjerkevik_stability_2016}.

It is straight forward to check that if $I_j$ and $J_k$ are the same type then: $$d_I(\mathds{1}^{I_j},\mathds{1}^{J_k}) \leq d_I(\mathds{1}^{\overline{I_j}},\mathds{1}^{\overline{J_k}}) \leq 2d_I(\mathds{1}^{I_j},\mathds{1}^{J_k})$$

and so for  $\mathbf{U}$-block decomposable modules $M,N$:
$$d_I(M,N) \leq d_I(\overline{M},\overline{N}) \leq 2d_I(M,N)$$

We can derive an interlevel set persistence module from a Reeb Graph. In \Cref{fig:ReebEmbedding} we sketch the 2-parameter $\mathbf{U}$-block decomposable module associated to a simple Reeb Graph together with its extension to a finitely presented 2-parameter module.
Since we can extend interlevel set persistence modules to finitely presented 2-parameter persistence modules we yield the following result as an application of \Cref{thm:LocalEquivalence}:

\begin{cor}[Local Equivalence of Interlevel Set Persistence Module Metrics]
Suppose $M,N \in \vec{vect}_\text{fin}^{\textbf{Int}(\mathbb{R})}$ are such that $d_I(M,N)=\varepsilon$ and $\frac{c_{\overline{M}}}{4(34\kappa +1)} \geq  \varepsilon$ then  for all $\kappa \in [0, \frac{1}{34})$ the matching distance between their extended modules is bounded below ${d}_0(\overline{M},\overline{N}) \geq \kapeps$.
\label{cor:LocalEquivalenceInterlevel}
\end{cor}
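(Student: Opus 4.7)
The plan is to reduce the corollary directly to \Cref{thm:LocalEquivalence} applied to the extended 2-parameter modules $\overline{M}$ and $\overline{N}$. First I would observe that, since $M,N$ are finitely determined interlevel set persistence modules, a decomposition theorem of \cite{botnan_algebraic_2018} (implicitly invoked in the section) yields that $M,N$ are $\mathbf{U}$-block decomposable, and that only finitely many blocks appear (by finite determinacy). Consequently the extensions $\overline{M},\overline{N}$ are finite direct sums of finitely presented interval modules, hence lie in $\vec{vect}_\text{fin}^{\vec{R}^2}$, so \Cref{thm:LocalEquivalence} is applicable to them.

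Next I would use the two inequalities already established in the excerpt for $\mathbf{U}$-block decomposable modules:
\[
d_I(M,N) \;\leq\; d_I(\overline{M},\overline{N}) \;\leq\; 2\,d_I(M,N).
\]
The upper bound, combined with the hypothesis $\varepsilon \leq \tfrac{c_{\overline{M}}}{4(34\kappa+1)}$, yields
\[
d_I(\overline{M},\overline{N}) \;\leq\; 2\varepsilon \;\leq\; \frac{c_{\overline{M}}}{2(34\kappa+1)},
\]
so the controlling-constant hypothesis of \Cref{thm:LocalEquivalence} is satisfied for the pair $(\overline{M},\overline{N})$ (with interleaving distance $\varepsilon' := d_I(\overline{M},\overline{N})$ in place of $\varepsilon$). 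Applying \Cref{thm:LocalEquivalence} therefore gives $d_0(\overline{M},\overline{N}) \geq \kappa\,\varepsilon'$. Since $\varepsilon' \geq d_I(M,N) = \varepsilon$ by the lower inequality, this implies $d_0(\overline{M},\overline{N}) \geq \kappa\varepsilon$, which is the desired conclusion.

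The main obstacle, and essentially the only subtle point, is the strict vs.\ non-strict inequality in the hypothesis of \Cref{thm:LocalEquivalence}: that theorem assumes $\varepsilon' < \tfrac{c_{\overline{M}}}{2(34\kappa+1)}$ strictly and concludes $d_0 > \kappa\varepsilon'$ strictly, whereas here we only have $\leq$ in the hypothesis and need $\geq$ in the conclusion. I would handle this by either (a) reading the hypothesis of \Cref{thm:LocalEquivalence} as holding strictly whenever either of the two nested inequalities $d_I(\overline{M},\overline{N}) \leq 2\varepsilon$ or $2\varepsilon \leq c_{\overline{M}}/(2(34\kappa+1))$ is strict (which reduces the problem to the degenerate case of equality in both), or (b) approximating the boundary case by letting $\kappa' \nearrow \kappa$ with $\kappa' < \kappa$ and taking the limit of the strict bound $d_0(\overline{M},\overline{N}) > \kappa'\varepsilon$. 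Both routes are routine once the isometric embedding and the controlling-constant comparison are in place, so the real content of the corollary is the translation of \Cref{thm:LocalEquivalence} through the extension $\overline{(\cdot)}$.
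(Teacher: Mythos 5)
Your proposal is correct and takes essentially the same approach as the paper, whose proof is the single sentence ``Apply \Cref{thm:LocalEquivalence} to the modules $\overline{M}$ and $\overline{N}$ using the fact that $d_I(\overline{M},\overline{N}) \leq 2 d_I(M,N)$.'' If anything you are more careful than the paper: you explicitly note and resolve the strict-versus-non-strict mismatch between the hypothesis of \Cref{thm:LocalEquivalence} and the hypothesis of the corollary, a boundary case the paper silently passes over.
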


\begin{proof}
Apply \Cref{thm:LocalEquivalence} to the modules $\overline{M}$ and $\overline{N}$ using the fact that $d_I(\overline{M},\overline{N}) \leq 2d_I(M,N)$.
\end{proof}

\Cref{cor:LocalEquivalenceInterlevel} provides an example of a local equivalence result which may be derived from our general result \Cref{thm:LocalEquivalence}. We relied on the fact that the poset $\textbf{Int}(\mathbb{R})$ naturally includes into $\mathbf{R}^2$ and that sensible constructibility criteria for interlevel set persistence modules give rise to finitely presented 2-parameter persistence modules. In this particular instance, interlevel set persistence modules and Reeb Graphs have been well studied and have additional structure which has been exploited to yield sharper results than \Cref{cor:LocalEquivalenceInterlevel} \cite{bjerkevik_stability_2016,hutchison_robustness_2010,botnan_algebraic_2018,carriere_local_2017,bauer_strong_2015}. We would be interested to explore other posets $\mathbf{P}$ which naturally include into $\mathbf{R}^n$ for some $n$, and the corresponding constructibility criteria of modules $\vec{vect}^\vec{P}$ for which this inclusion would give rise to finitely presented modules. This would take advantage of the generality of our main result \Cref{thm:LocalEquivalence} and yield results for more complicated topological data structures than Reeb Graphs. A starting point would be to explore Reeb Spaces.

\begin{figure}
\centering

\includegraphics[width=0.9\linewidth]{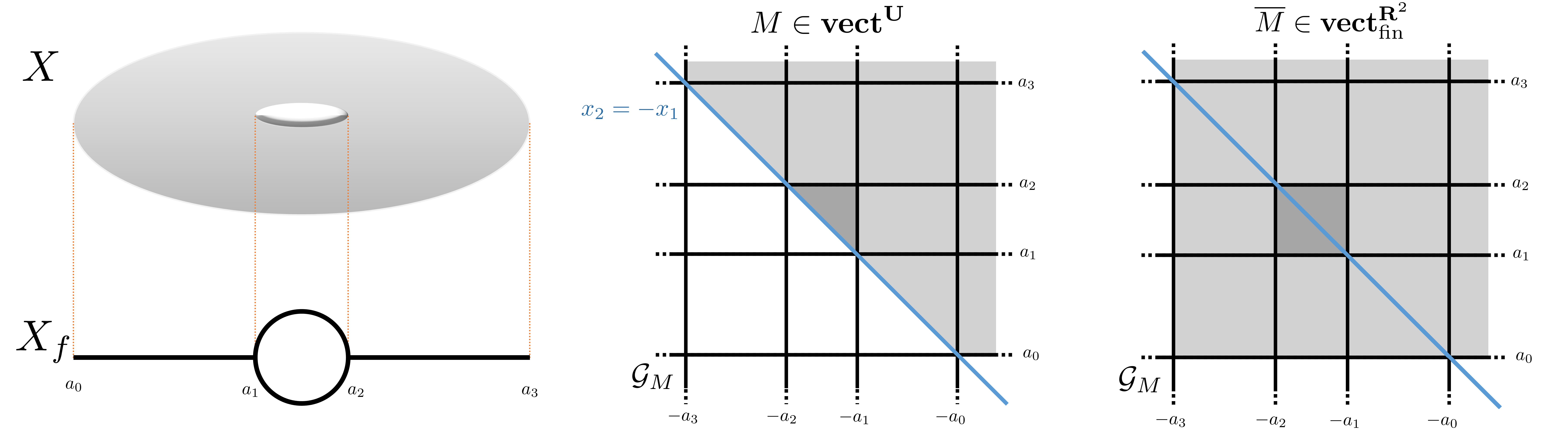}

\caption{An illustration of the Reeb graph $X_f$, where $X = \mathbb{T}^2$ is the 2-torus equipped with the projection $f$ to the real line. The Reeb graph $X_f$ has an associated $\vec{vect}$ valued-cosheaf on the real line which isometrically embedded into $\vec{vect}^{\mathbf{U}}$ (both spaces equipped with the interleaving distance). Using the block decomposition of an interlevel set persistence module $M\in \vec{vect}^{\mathbf{U}}$ we may extend $M$ to $\overline{M}\in \fpbimodules$.}
\label{fig:ReebEmbedding}
\end{figure}

\printbibliography

\end{document}